\documentclass[review,onefignum,onetabnum]{article}

\usepackage{subfig}
\usepackage{amsmath,amstext,amssymb,amsfonts,amsthm}
\usepackage{mathrsfs}
\usepackage{bm}
\usepackage{graphicx}
\usepackage{empheq}
\usepackage{bbold}
\usepackage{color}
\usepackage{comment}
\usepackage[ruled,vlined,linesnumbered]{algorithm2e}
\usepackage{setspace}

\def\x{\mathbf{x}}

\def \rr {\mathbb{R}}

\def \rd {\mathbb{R}^2}

\def \RR {\mathbb{R}}

\def \yp {y_{ref}}

\def \eps {\varepsilon}

\newtheorem{remark}{Remark}
\newtheorem{coro}{Corollary}
\newtheorem{definition}{Definition}
\newtheorem{lemma}{Lemma}
\newtheorem{proposition}{Proposition}
\newtheorem{theorem}{Theorem}
\newtheorem{hypothesis}{Hypothesis}[section]
\newcommand{\email}[1]{\texttt{#1}}
\newcommand{\IMT}{Universit\'e de Toulouse; UPS, INSA, UT1, UTM, Institut de Math\'ematiques de Toulouse,
CNRS, Institut de Math\'ematiques de Toulouse UMR 5219, F-31062 Toulouse, France; \email{fabrice.deluzet@math.univ-toulouse.fr}}
\newcommand{\IEC}{Universit\'e de Lorraine, CNRS, IECL, F-54000 Nancy, France; \email{forename.name@univ-lorraine.fr}}
\newcommand{\Dollars}{This work has been supported by a public grant from the "Laboratoire d'Excel\-lence Centre International de Math\'ematiques et d'Informatique"  (Labex CIMI) overseen by the French National Agency (ANR) as part of the "Investissement d'Avenir" program (reference ANR-11-LABX-0040) in the frame of the PROMETEUS project (PRospect of nOvel nuMerical modEls for elecTric propulsion and low tEmperatUre plaSmas).\newline
  Support from the ``F\'ed\'eration de Fusion pour la Recherche par Confinement Magn\'etique'' (FrFCM) in the frame of the project ``NEMESIA: Numerical mEthods for Macroscopic models of magnEtized plaSmas and related anIsotropic equAtions'' is also acknowledged
  }

\newcommand{\One}[1]{#1}   
\newcommand{\Two}[1]{#1}   

\begin{document}

\title{Beyond the analytic solution manufacturing for anisotropic elliptic equations
\footnote{\Dollars}}
\author{Fabrice Deluzet\thanks{\IMT} \and  Vladimir Latocha\thanks{\IEC} \and Fr\'ed\'eric Robert\footnotemark[3]}

	\maketitle
	
	\begin{abstract}
	This article is aimed at introducing a new procedure to define reference solutions to which the numerical approximations of anisotropic problems arising in plasma physics may be compared. In this context, the main issue is related to the construction of functions with no gradients along the vector field defining the anisotropy direction. The so-called solution manufacturing, implemented so far in the literature, only applies to framework where all the computations can be carried out analytically. In the present paper, we propose a new method in which this requirement is not mandatory. Indeed, via a Riemannian metric, we use a propagation of the boundary value by the flows related to the vector field defining the anisotropy direction. This permits to extend the application of the solution manufacturing to a broader range of contexts.
	\end{abstract}
	
	\paragraph{Keywords}
	Anisotropic equation, Plasma Physics, Solution manufacturing.

	\paragraph{MSC codes}
	41A05, 41A10, 65D05, 65D17

\section{Introduction}
The physics of plasma evolving under intense magnetic fields is  driven by equations with a severe anisotropy \cite{chodura_plasma_1986,hazeltine_plasma_2003,chen_introduction_2006}. The transport of charged particles is indeed more effective along the direction aligned with the magnetic field than in the transverse directions. This feature is at the origin of a large difference in the force imbalance existing between the transverse and the aligned directions to the magnetic field, the almost infinite mobility of particles along the magnetic field lines acting to restore the forces balance \cite{degond_asymptotic-preserving_2013}. This property has motivated the development of dedicated reduced models and related numerical methods (see for instance \cite{coulette_numerical_2013,besse_gyro-water-bag_2009,snyder_gyrofluid_1999,crouseilles_isogeometric_2012,grandgirard_5d_2016}). 

\smallskip\noindent One class of anisotropic problems arising in the physics of magnetized plasma \cite{degond_asymptotic-preserving_2013} is illustrated by means of the following toy model.
Let $b(x,y)$ be a regular unit vector pointing in the direction of the magnetic field, with $(x,y)\in \Omega$ an open bounded subset of $\mathbb{R}^2$, $\partial \Omega=\Gamma_D \cup \Gamma_N$ denoting its boundary. \One{The generic model problem is defined as}
\begin{subequations}\label{sys:prob:mod}
\begin{equation}\left\{
	\begin{aligned}
	&- \Delta_\perp u^\eps  - \frac{1}{\eps}\Delta_\parallel u^\eps = f^\eps \,,&\qquad &  \text{in } \Omega \,,\\
	&u^\eps = g_D\,,& \qquad  &\text{on } \Gamma_D \,,\\
	&(\nabla_\perp u^\eps + \frac{1}{\eps} \nabla_\parallel u^\eps)\cdot n = g_N \,,&\qquad  &\text{on } \Gamma_N\,,
	\end{aligned}\right.
\end{equation}
\One{with, $g_D$ and $g_N$ functions defining the boundary values on respectively $\Gamma_D$ and $\Gamma_N$, and},
\begin{equation}
	\begin{aligned}
		\nabla_\parallel u &= (b\cdot \nabla u) b \,,& \qquad \nabla_\perp u &= \nabla u - \nabla_\parallel u \,,\\
		\Delta_\parallel u &= \nabla \cdot \nabla_\parallel u \,,& \qquad \Delta_\perp u &= \Delta u - \Delta_\parallel u \,.
	\end{aligned}
\end{equation}
\end{subequations}
\One{The solution of the problem, denoted $u^\varepsilon$, is a scalar-valued function. Depending on the applications, the solution to the anisotropic problem may refer to either the electrostatic potential, from which the electric field is derived in quasi-neutral models or, to the plasma density as well as temperature in the so-called drift approximation \cite{degond_asymptotic-preserving_2013}.} In these equations, the anisotropy direction is defined by the vector field $b$ and the anisotropy strength by means of the  asymptotic parameter $\eps^{-1}$. The regime of interest is related to large anisotropies (or magnetic field magnitudes), hence $\eps\ll1$.  

\paragraph{\bf Numerical challenge as $\eps\ll 1$}
\medskip\noindent The solutions to Eqs.~\eqref{sys:prob:mod} are prone to develop gradients with a small amplitude in the parallel direction (with respect to the $b$-field) compared to the perpendicular ones, translated into the following scaling relations
\begin{equation}\label{eq:prop:gradients}
	\nabla_\parallel u^\eps = \mathcal{O}(\eps) \,, \qquad \nabla_\perp u^\eps = \mathcal{O}(1)\,. 
\end{equation}
 This property also gives rise to a source term $f^\eps$ remaining finite in the limit of infinite anisotropy, with
$\lim_{\eps\to0} f^\eps = f^0$ with $\|f^0\|_\infty  < \infty$.
Assuming that the solution is regular with respect to $\eps$ the following Ansatz is therefore introduced 
\begin{subequations}\label{sys:grad:para:ansatz}
\begin{equation}
	u^\eps(x,y) = u^0(x,y) + \eps u^1(x,y) \,, \label{eq:grad:para:ansatz}
\end{equation}
where the component $u^0$ is constant along the $b$-field lines, yielding
\begin{equation}
	\nabla_\parallel u^0(x,y) = 0 \,. \label{eq:grad:para:ansatz:property}
\end{equation}
\end{subequations}
Owing to the definition \eqref{sys:grad:para:ansatz}, the gradients (or the Laplacians) of the solution reduces to 
\begin{equation}
	\begin{aligned}		
		\nabla_\parallel u^\eps &= \eps \nabla_\parallel u^1 = \mathcal{O}(\eps)\,,&  \qquad \nabla_\perp u^\eps &= \nabla_\perp u^0 + \eps \nabla_\perp u^1 = \mathcal{O}(1)\,, \\
		\qquad \Delta_\parallel u^\eps &= \eps \Delta_\parallel u^1 = \mathcal{O}(\eps) \,, & \qquad \Delta_\perp u^\eps &= \Delta_\perp u^0 + \eps \Delta_\perp u^1 = \mathcal{O}(1) \,,
	\end{aligned}
\end{equation}
entailing a finite source term $f^\eps$ for vanishing $\eps$.

\medskip\noindent  This is a challenge for numerical methods to reproduce the anisotropy in the gradient as stated by Eqs.~\eqref{eq:prop:gradients}, specifically when the mesh and the anisotropy direction are misaligned \cite{yang_accuracy_2025}. This issue has motivated a prolific literature aiming at developing sophisticated discretization techniques. To cite few of them, one can first mention the use of high order discretizations \cite{piraccini_spatial_2022,piraccini_recent_2022,giorgiani_high-order_2020,crouseilles_comparison_2015}, specific reconstructions using flux coordinate \cite{hariri_flux-coordinate_2013,van_es_finite-difference_2014,van_es_finite-volume_2016} as well as a parametrization method \cite{guillaume_numerical_2005} and asymptotic preserving schemes \cite{tang_asymptotic_2017,wang_uniformly_2018,degond_asymptotic-preserving_2012,degond_duality-based_2012}, this list being non-exhaustive.
\paragraph{\bf Evaluating numerical methods solving (\ref{sys:prob:mod}) via solution manufacturing}
\medskip\noindent
Solution manufacturing is a widely employed technique for verifying simulation codes and assessing the reliability of numerical approximations. By constructing exact or high-fidelity solutions to benchmark problems, this method enables rigorous validation of computational methods, ensuring their accuracy and robustness in simulating complex physical phenomena across diverse application fields \cite{roache_code_2002,tranquilli_deterministic_2022,freno_manufactured_2024,amor-martin_rigorous_2024,etienne_manufactured_2012,veeraragavan_use_2016}. 

In the present context, it boils down to: first computing analytically a function $u^0$ with no parallel gradient; second, choosing an arbitrary function $u^1$; and last, computing the source term of the anisotropic equation $f^\eps= -\Delta_\perp (u^0+\eps u^1)-\Delta_\parallel u^1 $. This quantity is then used as right-hand side for testing numerical methods aimed at solving the system \eqref{sys:prob:mod} with the Ansatz \eqref{sys:grad:para:ansatz}, and the computed solution is compared to $u^0+\eps u^1$. This approach has been successfully implemented in previous works \cite{degond_asymptotic-preserving_2012,deluzet_two_2019} for specific $b$-field configurations, where one can analytically associate a solution and a $b$-field.

\paragraph{\bf Key contributions: Broadening the scope of numerical method validation beyond conventional analytical solution manufacturing, with specific focus on addressing the challenges posed by singular magnetic fields}
\medskip\noindent
Analytical approaches narrow the scope of treatable magnetic fields. Because applications involve solving \eqref{sys:prob:mod} in configurations such as plasma confinement by magnetic cusps \cite{jiang_magnetic_2020,deluzet_numerical_2023,garrigues_acceleration_2024}, it is natural to extend the computation of the right-hand side of \eqref{sys:prob:mod} to cases where $b$ is chosen according to physical considerations. 
Despite the simplicity of the equations, high anisotropy limits result reliability, motivating the specific treatment addressed in this article. In the present work, the solution manufacturing procedure is extended in two directions.
\begin{enumerate}
\item A richer set of $b$-field definitions can be handled using the flow-transported-solution method introduced herein. This novel approach extends beyond the direct analytical computation of the solution component $u^0$, as well as its perpendicular Laplacian $\Delta_\perp u^0$ required to define the source term in problem \eqref{sys:prob:mod}. The method relies on introducing a local chart, its derivatives, and the associated Riemannian metric to define a system of ordinary differential equations (ODEs). These ODEs are then numerically integrated to yield point-wise values of $u^0$ and $\nabla_\perp u^0$ at arbitrary spatial locations.

Moreover, the flow-transported-solution method supports a more general Ansatz for the solution, of the form $u= \check{u}^0+\eps u^1$, where $\Delta_\parallel \check{u}^0=0$ but $\nabla_\parallel \check{u}^0 \neq 0$ (see Theorem \ref{Th:B2}). The present work focuses on implementing the method for the simpler Ansatz $u= {u}^0+\eps u^1$ where $u^0$ has no parallel gradient $\nabla_\parallel u^0 = 0$, this choice is motivated by the need to validate the method within an analytical framework, using a reference solution for direct comparison. However, the flow-transported-solution method itself is fully general and can accommodate both Ansatz forms: the case compliant with the analytical framework $\nabla_\parallel u^0 = 0$ and the more general case with $\nabla_\parallel \check{u}^0 \neq 0$. Numerical exploration of this later case is left for future work.
\item This flow-transported-solution method is genuinely local, in the sense that both $u^0$ and $\Delta_\perp u^0$ are obtained by integrating appropriate boundary data, specified in the sequel, along magnetic field lines. The method is proven to be stable (see Corollary \ref{coro:stab}), regardless of the presence of singularities in the magnetic field. This property is instrumental in assessing the reliability of classical numerical methods, which require global resolution of the problem over the entire computational domain, including regions containing magnetic field singularities, thereby raising concerns about their accuracy and robustness. In contrast, the flow-transported-solution method provides a means of computing high-fidelity point-wise values while maintaining proven stability even in the vicinity of singular regions.
\end{enumerate}  

\medskip\noindent The paper is structured as follows. The flow-transported-solution method is formally introduced in Sec.~\ref{sec:laplacian} to present the key concepts and tools employed. For simplicity, this presentation is confined to the simplified framework derived from the Ansatz $u= {u}^0+\eps u^1$ with $\nabla_\parallel u^0=0$. This framework is resumed and specified in Sec.~\ref{sec:numerics}, where the effectiveness of the flow-transported-solution method is assessed within an analytical framework.
The more general class of solutions, of the form $u= \check{u}^0+\eps u^1$ with $\Delta_\parallel \check{u}^0= 0$ and $\nabla_\parallel \check{u}^0\neq 0$, is addressed in the mathematical analysis provided in appendix~\ref{appendix:fred}. There, rigorous hypotheses are established, and it is proven that the solution to the anisotropic problem, along with its Laplacian, can be recovered by integrating boundary data along flows associated with the $b$-field. Existence and stability results are also derived in this appendix.

\section{Computing the Laplacian in aligned coordinates}
\label{sec:laplacian}





\subsection{Introduction and hypotheses}
The purpose of this section is to introduce the material within a simplified framework and to detail the procedure that replaces the analytical computation of $u^0$ and $\Delta u^0$, $u^0$ being the component of the anisotropic problem solution with no parallel gradient as defined by Eqs.~\eqref{sys:grad:para:ansatz}. The objective is thus to provide a numerical method for approximating the functions $(x,y) \mapsto u^0(x,y)$  as well as  $(x,y) \mapsto \Delta u^0(x,y)=\Delta_\perp u^0(x,y)$. To establish clear milestones, the main result is first presented, followed by a specification of the tools used in this computation. Before delving into details, the framework is defined, and key properties are stated.

We consider a two-dimensional domain $\Omega = (0,1) \times (0,1) \subset \RR^2 $ with $\partial \Omega = \Gamma_x \cup \Gamma_y$, 	
		$\Gamma_x = (0,1) \times \{0,1\}$, $\Gamma_y = \{0,1\} \times (0,1)$. The vector field (or magnetic field)  $\vec B : \RR^2 \rightarrow \RR^2$ defining the anisotropy direction is assumed to satisfy the following properties
    \begin{equation}	\label{eq:def:proporties:b}
          \vert \vec B\cdot \vec n \vert>0 \quad \text{ on } \Gamma_x \,,\qquad
        \vec B\cdot \vec n = 0	\quad \text{ on } \Gamma_y \,,
      \end{equation}
$\vec n$ being the outward normal to the domain.
      Geometrically, this amounts to field lines that do not cross the
boundary $\Gamma_y$ and are normal to the boundary $\Gamma_x$.  The magnetic field is normalized, defining the unit vector $\vec b$ as
$\vec b(x,y) = {\vec B(x,y)}/{\|\vec B(x,y)\| }$,
where here and in the sequel $\|\cdot\|$ denote the usual Euclidean norm on $\rr^2$. The computation of $\Delta u^0(x,y)$ relies on propagating relevant quantities from the domain boundary $\Gamma_x$ along the flows associated with the 
$\vec b$ field, under the assumptions that every field line intersects this boundary and that field lines do not cross. A rigorous analysis, including issues related to the regularity of the vector field $\vec B$ as well as the singularity of $\vec b$ in points $\vec B(x,y) = (0,0)$ is proposed within appendix~\ref{appendix:fred}. The contains of this section remains formal, the aim being to present the concepts in the framework of the numerical investigations conducted in Sec.~\ref{sec:numerics}.
\subsection{Main result}
Let us denote $(x_0, t)$ the coordinate system aligned with the $\vec b$ field starting from the reference line $\{y=\yp\}$ for some $\yp\in [0,1]$. We will mostly take $\yp=0$ in this section, so that the reference line is a portion of $\Gamma_x$.
\One{Under appropriate assumptions regarding the geometry of $\vec b$ (see Appendix~\ref{appendix:fred}), it is possible to define a bijection, denoted $F^{-1}$ between the Cartesian coordinates $(x,y)$ and the coordinates $(x_0,t)$. This mapping, $F^{-1}: (x,y) \mapsto (x_0,t)$, is illustrated in Fig.~\ref{figMag}. Its inverse, denoted $F$ is referred to as the pullback mapping.}
\begin{figure}[htbp]
    \centering
    \begin{minipage}{0.4\textwidth} 
		\vspace*{-0.5cm}
        \centering
        \includegraphics[width=0.8\linewidth]{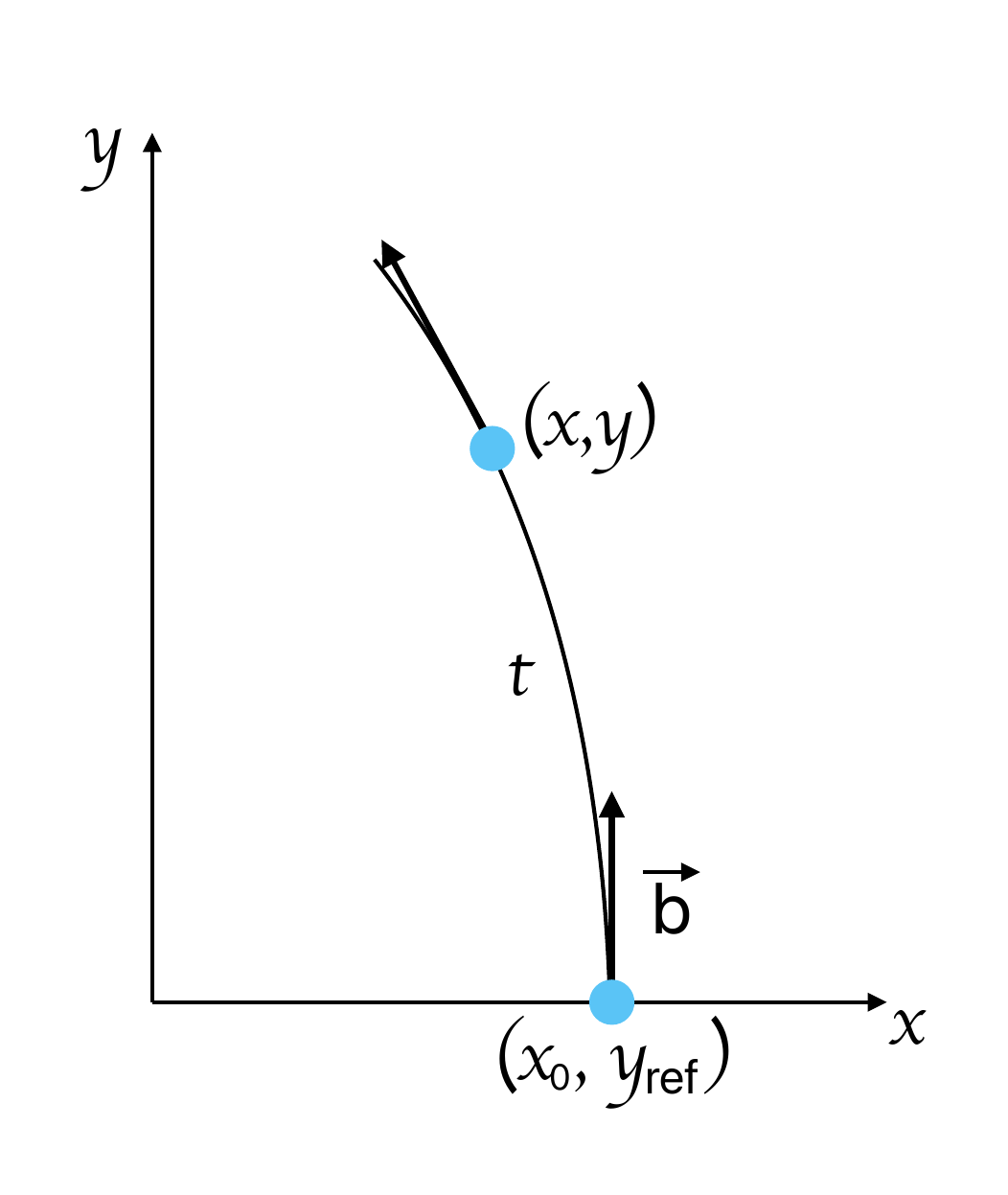}
		\vspace*{-0.5cm}
    \end{minipage}%
    \hfill
    \begin{minipage}{0.55\textwidth} 
        \captionof{figure}{Representation of the corresponding variables $(x,y) \leftrightarrow (x_0,t)$: 
        in this example, the point with coordinates $(x,y)$ lies on the magnetic field line crossing the
        boundary at $(x_0, \yp)$, and (owing to the fact that
        $||\vec{b}||\equiv 1$) $t$ is the length of the curve between
        $(x_0,\yp)$ and $(x,y)$. 
        }
        \label{figMag}
    \end{minipage}
\end{figure}
\One{The variable $x_0$ denotes the abscissa of the foot $p_0=(x_0,\yp)$ of the field line passing through the point $p=(x,y)$. The variable $t$ represents the arclength along the field line connecting $p$ and $p_0$.}

\smallskip\noindent Differential calculus yields the following formula, elucidated in the remainder of this section:
\begin{align}
	u^0\circ F(x_0,t)&= u^0(x_0,\yp)\,,\label{eq:def:u0:Flow}\\
(\Delta u^0)\circ F(x_0,t)&=\sum_{i,j}g^{ij}\partial_{ij}(u^0\circ F)-\sum_k\left(\sum_{p,q}g^{pq}\Gamma_{pq}^k\right)\partial_k(u^0\circ F)\,,\label{eq:Lap:u0}
\end{align}
where $(g^{ij})_{1\leq i,j \leq 2}$ is \One{the metric of $F$ the pullback mapping} (see Eq.~\eqref{exp:invg:0}) and the Christoffel symbols $\Gamma_{ij}^k$ (defined by Eq.~\eqref{def:christoffel}) satisfy 
\begin{equation}\label{eq:Gammakpq}
  \sum_{i,j}g^{ij}\Gamma_{ij}^k=\sum_{i,j,m}g^{ij}g^{km}\partial_ig_{mj}-\frac{1}{2}\sum_{i,j,m}g^{ij}g^{km}\partial_m g_{ij}
  \end{equation}

\subsection{The chart and its derivatives}\label{sec:chart}
The computations of the "flow transported approximation"  $(\Delta u^0)\circ F$ relies on three flows introduced in the following lines.
\subsubsection{Definition of $ \Phi_t(x_0,\yp )$,  the flow associated with $\vec{b}$}
From the field $\vec{b}$ the variables $(x_0,t)$ are defined thanks to the flow 
\begin{equation}\label{def:flow:0}
\left\{\begin{array}{l}
\displaystyle \frac{d}{dt}\Phi_t(x,y)=\vec{b}\big(\Phi_t(x,y)\big)\,,\\[1em]
\Phi_0(x,y)=(x,y )\,.
\end{array}\right.
\end{equation}
For any point $(x,y)\in \Omega$, the field line issued from $(x,y)$ is followed backwards until its foot located on the starting line $y=\yp $ as sketched on Fig.~\ref{figMag}. The coordinate $x_0$ is defined as the
abscissa of the field line foot, $t$  being the arc length of the field line between $(x_0,\yp )$ and $(x,y)$.
%
Following this procedure, a function constant along the $\vec b$ field lines can readily be constructed, transporting its value on the boundary. To this end, the backward flow $\check \Phi_t$ is introduced. \One{Defined as the downstream flow associated with the vector field b, it satisfies the identity $\check \Phi_t \circ \Phi_t=Id$, yielding}
\begin{equation}\label{def:flow:0:backward}
	\left\{\begin{array}{l}
	\displaystyle \frac{d}{dt}\check\Phi_t(x,y)=-\vec{b}\big(\check\Phi_t(x,y)\big)\,,\\[1em]
	\check \Phi_0(x,y)=(x,y)\,,
	\end{array}\right.
	\end{equation}
	and, denoting $t_0$ such that $(x_0,t_0)=F^{-1}(x,y)=\check\Phi_{t_0}(x,y)=(x_0,\yp )$, $(x_0,\yp )$ being the foot of the field line issued from $(x,y)$, the function 
$ (x,y)\mapsto u^0(x,y) = u^0\big(\check\Phi_{t_0}(x,y)\big)$, so that $F(x_0,t_0)=(x,y)$, is constant along the $b$-field lines.

\subsubsection{Definition of $S_t^{(1)}(x_0)=\partial_{x_0}\Phi_t(x_0,\yp )$,  a first order
  derivative of $\Phi_t$}
Differentiating $\Phi_t(x_0,\yp )$ defined by Eq.~\eqref{def:flow:0} with respect to $x_0$, yields
\begin{equation}\label{eq:Dx0flow:bis}
\left\{\begin{array}{l}
\displaystyle \frac{d}{dt}\big(\partial_{x_0}\Phi_t(x_0,\yp )\big)= D\vec{b}\big({\Phi_t(x_0,\yp )}\big)
\; \left[ \partial_{x_0}\Phi_t(x_0,\yp )\right]\,, \\[0.5em]
\partial_{x_0}\Phi_0(x_0,\yp )=(1,0)\,,
\end{array}\right.
\end{equation}
 where $D\vec{b}\left(\Phi_t(x_0,\yp )\right)$ is the Jacobian matrix
 of $\vec{b}(x,y)$ computed at $(x,y)=\Phi_t(x_0,\yp )$. 
 Since the only remaining variable to determine in $p_0=(x_0,\yp )$ is $x_0$, $\yp $ being a parameter, this yields to the definition of the flow $S^{(1)}_t(x_0) = \partial_{x_0}\Phi_t(x_0,\yp )$ solution to the Cauchy problem
\begin{gather*}
	\label{eq:Dx0flowPhi}
\left\{
\begin{array}{l}
\displaystyle \frac{d}{dt}\left(Y_t\right)= D\vec{b}\left({\Phi_t(x_0,\yp )}\right) \; \left[ Y_t\right]\,,\\[0.5em]
Y_{0}=(1,0) \,.
\end{array}\right.
\end{gather*}
\subsubsection{Definition of $S_t^{(2)}(x_0)=\partial^2_{x_0x_0}\Phi_t(x_0,\yp )$,  a second order
  derivative of $\Phi_t$}
Differentiating  \eqref{eq:Dx0flow:bis} with respect to $x_0$ and denoting $D^2\vec{b}$ the Hessian of $\vec b$, yields%
\begin{equation}\label{def:flow:3:bis}
\left\{\begin{array}{l}
\displaystyle \frac{d}{dt}\left(\partial^2_{x_0x_0}\Phi_t(x_0,\yp )\right)=
D\vec{b}\big(\Phi_t(x_0,\yp )\big)\left[\partial^2_{x_0x_0}\Phi_t(x_0,\yp )\right]\\
\hspace*{8em}+
D^2\vec{b}\big(\Phi_t(x_0,\yp )\big)
\left[\partial_{x_0}\Phi_t(x_0,\yp ),\partial_{x_0}\Phi_t(x_0,\yp )\right]\,,
\\
\partial^2_{x_0x_0}\Phi_0(x_0,\yp )=(0,0)\,.
\end{array}\right.
\end{equation}
The flow $S^{(2)}_t(x_0) = \partial^2_{x_0x_0}\Phi_t(x_0,\yp )$ is the solution to the Cauchy problem
\begin{gather*}	\label{eq:D2x0flowPhi}
\left\{
\begin{array}{l}
\displaystyle \frac{d}{dt}\left(Z_t\right)= D\vec{b}\big({\Phi_t(x_0,\yp )}\big) \; \left[ Z_t\right] + D^2\vec{b}\big(\Phi_t(x_0,\yp )\big)
\left[S^{(1)}_t(x_0),S^{(1)}_t(x_0)\right]\,,\\[0.5em]
Z_{0}=(0,0)\,.
\end{array}\right.
\end{gather*}

\subsection{Metric of the pullback F}

  The pullback metric, denoted $g$, and its inverse are defined as 
  \begin{equation}
    g=\left(g_{ij}\right) = \left(\begin{matrix}
      g_{tt} & g_{t x_0}\\ g_{tx_0} & g_{x_0x_0}
      \end{matrix}\right)\,, \qquad 
    g^{-1}=\left(g^{ij}\right) =\frac{1}{\det g} \left(\begin{matrix}
  g_{x_0x_0} & -g_{t x_0}\\ -g_{tx_0} & g_{tt}
  \end{matrix}\right)\label{exp:invg:0}.
  \end{equation}
with  
  \begin{equation}\label{exp:detg1}
    \begin{aligned}
    &g_{tt}=\langle \partial_t F,\partial_t F\rangle=1\,,\\
&g_{tx_0}=g_{x_0t}=\langle \partial_t F,\partial_{x_0}
                     F\rangle=\langle \vec{b}\big( \Phi_t(x_0, \yp )\big),
                     S_t^{(1)}(x_0)\rangle\,,\\
 &g_{x_0x_0}=\langle \partial_{x_0}F,\partial_{x_0} F\rangle=\Vert S_t^{(1)}(x_0)\Vert^2\,,
    \end{aligned}
\end{equation}
where $\langle \cdot, \cdot \rangle$ is the usual scalar product in $\rr^2$.
\One{We emphasize the fact that $g$ and $\det g$ (and therefore 
  $g^{-1}$) are explicit: the coefficients of the metric are straightforwardly obtained thanks to the flow $S^{(1)}_t$ which boils down to integrating the Cauchy problem defined by Eq.~\eqref{eq:Dx0flow:bis}}. 

\subsection{Expression of $\Delta u^0$ in the chart $F$}
\subsubsection{The derivatives of $u^0\circ F$}
\label{sec:deriv}
The expression of $(\Delta u^0)\circ F$ as stated by Eq.~\eqref{eq:Lap:u0} relies on the partial derivatives of $u^0\circ F$ as defined by Eq.~\eqref{eq:def:u0:Flow}, with
\begin{equation}     \label{eq:derivatives:UcircF:simple} 
	\begin{aligned}        
  \partial_t(u^0\circ F)&=\partial^2_{tt}(u^0\circ F) = \partial^2_{t x_0}(u^0\circ F)=0 \,,\\
  \partial_{x_0}(u^0\circ F)&=\partial_{x_0}u^0(x_0, \yp )\,,\\
\partial^2_{x_0 x_0}(u^0\circ F)&=\partial^2_{x_0x_0}u^0(x_0, \yp )\,.
	\end{aligned}
\end{equation}
We refer to appendix~\ref{appendix:fred} for the derivation of these quantities in a fairly general case.
\subsubsection{Estimates of $\sum_{p,q}g^{pq}\Gamma_{pq}^k$}
The coefficients of $g^{-1}=\left(g^{ij}\right)$ are defined by Eqs.~\eqref{exp:detg1}. We are left with computing the derivatives of the
metric coefficients to explicit the Christoffel symbols $\Gamma^k_{pq}$ as defined by Eq.~\eqref{eq:Gammakpq}. This yields
\begin{equation}\label{eq:def:dmgij}
\begin{aligned}
 & \partial_t g_{tt}=\partial_{x_0}g_{tt}= 0 \\
&\partial_t g_{t x_0}= \langle D\vec{b} \big(\Phi_t(x_0, \yp )\big)\big[\vec{b}\big(\Phi_t(x_0,\yp )\big)\big], S^{(1)}_t(x_0)\rangle\\
 &\begin{multlined}[0.8\textwidth]
\partial_{x_0} g_{t x_0}=\langle  \vec{b}\big(\Phi_t(x_0,\yp )\big), S^{(2)}_t(x_0)\rangle \\
+\langle D\vec{b}\big(\Phi_t(x_0,\yp )\big)\big[S_t^{(1)}(x_0)\big], S_t^{(1)}(x_0)\rangle\,,
 \end{multlined}\\
&\partial_t g_{x_0 x_0}= 2\langle D\vec{b} \big(\Phi_t(x_0, \yp )\big)\big[S_t^{(1)}(x_0)\big], S_t^{(1)}(x_0)\rangle\,,\\
&\partial_{x_0} g_{x_0 x_0}= 2\langle S_t^{(2)}(x_0), S_t^{(1)}(x_0)\rangle\,.
\end{aligned}
\end{equation}
The sequential steps for constructing $\Delta u^0(x,y)$ using the flow-transported method are detailed in Algorithm~\ref{algo:flow}.

\begin{algorithm}[H]
\small
\setstretch{0.9}
\caption{Computation of $\Delta u^0(\mathbf{x})$ via the flow-transported-solution method}
\label{algo:flow}
\KwIn{Evaluation point $\mathbf{x} = (x, y)$, vector field $\vec b$, boundary data on $y=\yp$}
\KwOut{$\Delta u^0(x,y)=\Delta u^0\circ F(x_0,t)$ as defined by Eq.~\eqref{eq:Lap:u0}.}

\BlankLine
\tcp{Step 0: Pre-processing via flows integration.}
Determine $(x_0, t)$ by integrating the backward flow defined by Eq.~\eqref{def:flow:0:backward} from $\mathbf{x}$ to the boundary located on $\yp \in \{0, 1\}$.\\;
Set $x_0$ as the abscissa of the foot of the current field line  and $t$ its arclength. \\;
Solve the Cauchy problems \eqref{eq:Dx0flow:bis} and \eqref{def:flow:3:bis} to obtain $S_{t}^{(1)}(x_{0})$ and $S_{t}^{(2)}(x_{0})$.\;

\BlankLine
\tcp{Step 1: Metric Tensor and Derivatives}
Compute the metric tensor $g$ using Eq.~\eqref{exp:detg1} and its inverse $g^{-1} = (g^{ij})$ from Eq.~\eqref{exp:invg:0}\\;
Compute the derivatives $\partial_{m}g_{ij}$ via Eq.~\eqref{eq:def:dmgij}.\;

\BlankLine
\tcp{Step 2: Pullback Derivatives}
Evaluate the derivatives of the boundary data $\partial_{ij}(u^{0}\circ F)$ and $\partial_{k}(u^{0}\circ F)$ using Eq.~\eqref{eq:derivatives:UcircF:simple}.\;

\BlankLine
\tcp{Step 3: Final Assembly}
Compute the contracted Christoffel symbols $C(k)=\sum_{i,j} g^{ij}\Gamma^k_{ij}$ defined by Eq.~\eqref{eq:Gammakpq}.\\;
Assemble $\Delta u^0(x,y)=\Delta u^0 \circ F(x_0,t)$ via Eq.~\eqref{eq:Lap:u0}, using the precomputed metric, the contracted Christoffel symbols and the pullback derivatives.\;
\end{algorithm}

\section{Numerical investigations}\label{sec:numerics}
\subsection{Solution manufacturing: analytic framework}
An analytic framework, introduced in precedent works \cite{degond_asymptotic-preserving_2012,deluzet_two_2019} to perform the verification of numerical methods discretizing anisotropic elliptic problems, is implemented to assess the precision of the flow transported solutions. An analytic solution and source term may be computed for vector fields derived from a potential $\psi$ and defined, for  $\gamma(x,y) \in \mathbb{R}$, as
\begin{eqnarray}\label{eq:B:grad:perp}
	B(x,y) = \gamma(x,y) \nabla^\perp \psi(x,y) = \gamma(x,y) \left(\frac{\partial \psi}{\partial y}(x,y), -\frac{\partial \psi}{\partial x}(x,y)\right)^T \,,
\end{eqnarray}
so that it is straightforward to verify that $B\cdot \nabla \psi =0$. For this class of magnetic field, it is therefore possible to exhibit functions with no parallel gradients.
Let $(x,y)\in \Omega$ with $\Omega = (0,1)\times (0,1)$. The boundaries of the domain are denoted $\Gamma_x$ and $\Gamma_y$
\begin{subequations}\label{sys:def:B}
\begin{equation}
	\begin{aligned}		
		\Gamma_x &= (0,1) \times \{0,1\} \,,\qquad \Gamma_y &= \{0,1\} \times (0,1) \,.
	\end{aligned}
\end{equation}
The potential from which the magnetic field is derived is defined as 
\begin{equation}
	\psi(x,y)=\sin  \big(\theta(x,y) \big) \,, \quad \theta(x,y) = \Two{\ell} \Big(\pi  x +\beta  \left(x^{2}-x \right) \cos \! \left(\pi  y \right)\Big)\,,
\end{equation}
with the vector field defining the anisotropy direction 
	\begin{equation}\label{eq:def:B}
		\vec B(x,y) = \left(\begin{array}{c}
\pi  \beta  \left(x^{2}-x \right) \sin \! \left(\pi  y \right) 
\\
 \beta  \left(2 x -1\right) \cos \! \left(\pi  y \right)+\pi  
\end{array}\right)\,, \qquad \vec b(x,y) = \frac{\vec B(x,y)}{\|\vec B(x,y) \|} \,.
	\end{equation}
The $B$-field is obtained thanks to Eq.~\eqref{eq:B:grad:perp} with $\gamma^{-1}(x,y)= -\Two{\ell} \cos\big(\theta(x,y)\big)$ and assuming $\gamma(x,y) \neq 0$ for $(x,y)\in \Omega$, although this property may not be met for any values of the parameters \Two{$\ell$} and $\beta$.  
The curvature of $\vec B$ is parameterized by $\beta>0$.  
For $\beta=0$ the field lines are straight lines, the $B$-field being aligned with the $y$-axis. For $\beta> \pi$ the field lines may be either open or closed. Nonetheless, the definition of $\Omega$  entails that any field line intersects one boundary of the domain. 
A graphical representation of the $B$-field is proposed on Fig.~\ref{fig:B:analytic} for $\beta=3$ and $\beta=8$. 
\begin{figure}[!ht]\centering
	\subfloat[$\beta=3$.]{\includegraphics[width=0.48\textwidth]{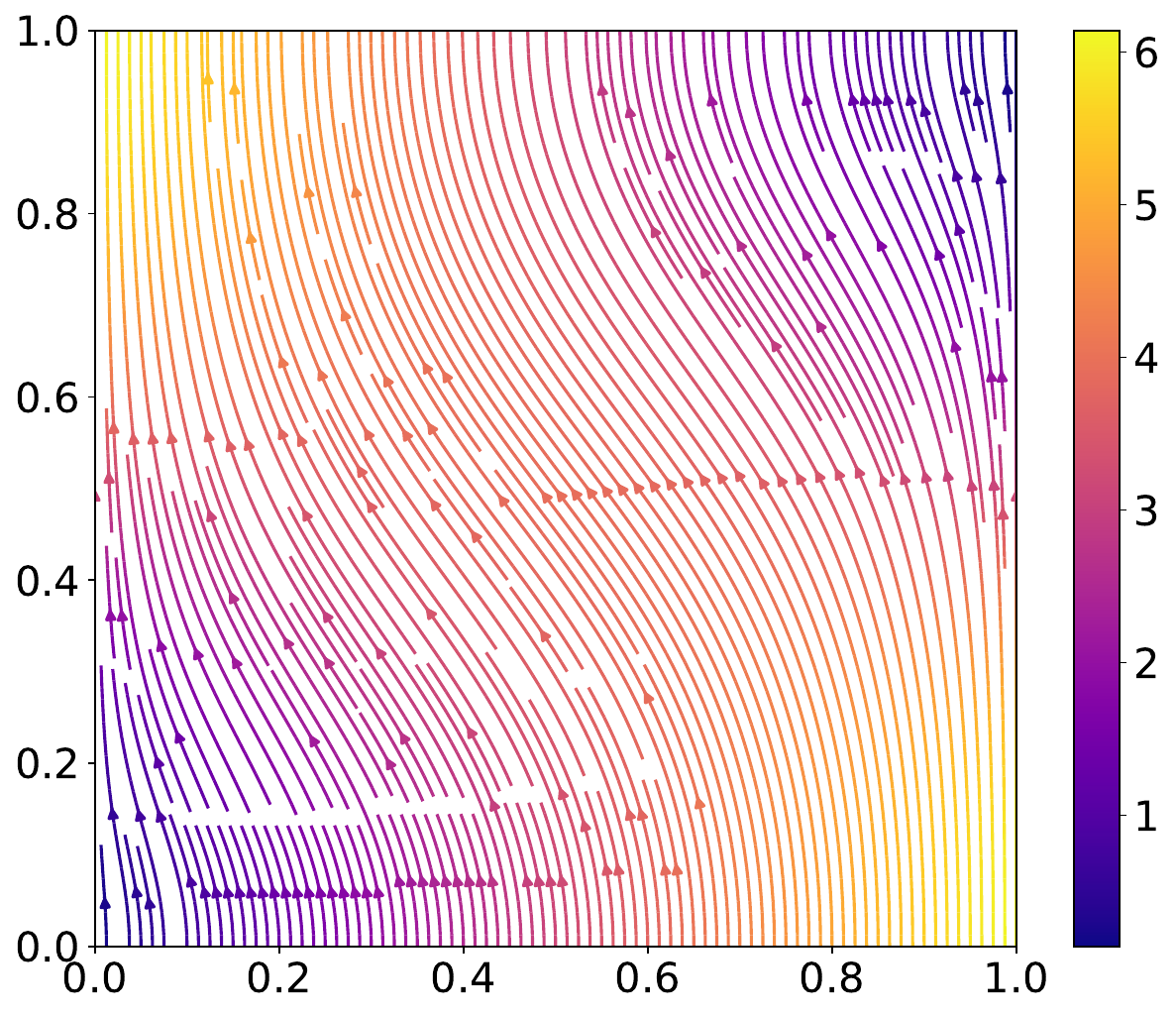}}\hspace*{0.04\textwidth}%
	\subfloat[$\beta=8$.]{\includegraphics[width=0.48\textwidth]{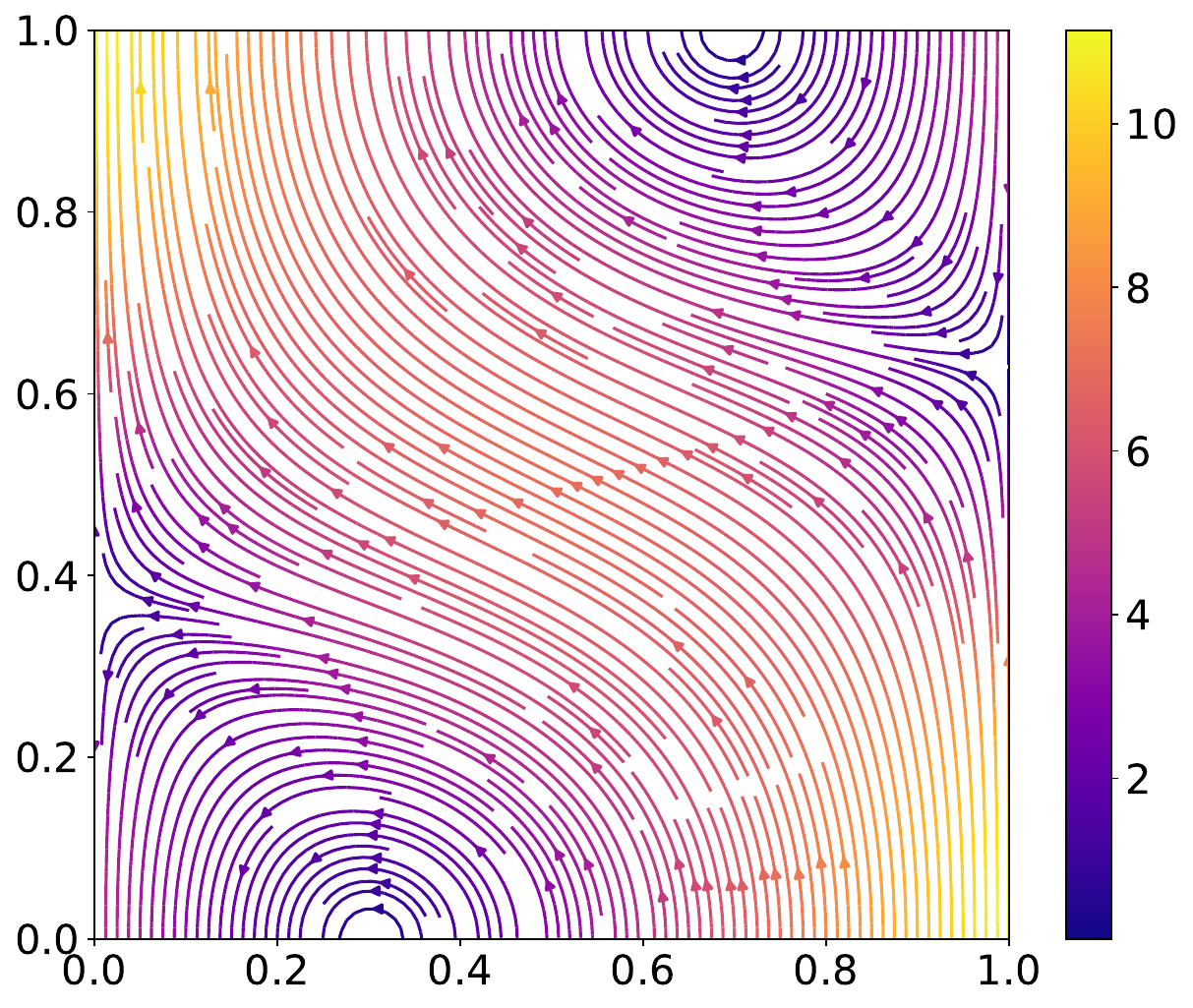}}	
	\caption{Vector field $\vec B$ defined by Eq.~\eqref{eq:def:B} for different $\beta$-values as a function of $(x,y)\in \overline{\Omega}=[0,1]\times[0,1]$.}\label{fig:B:analytic}
	\vspace*{-0.3cm}
\end{figure} 
The normalized vector field pointing in the direction of the anisotropy, denoted $\vec b $, satisfies the properties stated by Eq.~\eqref{eq:def:proporties:b}.%
\end{subequations}%
\begin{subequations}\label{eqs:solution:def}
A function with no parallel gradient may be readily deduced from $\psi$ with for instance
	\begin{equation}\label{eq:def:phi0}
		u^{0}(x,y)= \sin \big(\theta(x,y)\big)\,.
	\end{equation}
The function $u^0$ defined by Eq.~\eqref{eq:def:phi0} satisfies $\vec{b} \cdot \nabla u^0=0$  hence $\Delta_\parallel u^0 = \nabla \cdot \big( (\vec{b} \cdot \nabla u^0) \vec{b} \big) = 0$.
In this expression, \Two{$\ell$}-values parametrize, for a given value of $\beta$, the magnitude of the solution derivatives in the direction perpendicular to $\vec b$. The analytic solution is plotted on Figs.~\ref{fig:u0:manufactured}. Simple algebra yields the analytic expression of $\Delta u^0=\Delta_\perp u^0$:
\begin{figure}[!ht]\centering
	\subfloat[$(\beta,\Two{\ell})=(3,1)$.]{\includegraphics[height=0.39\textwidth]{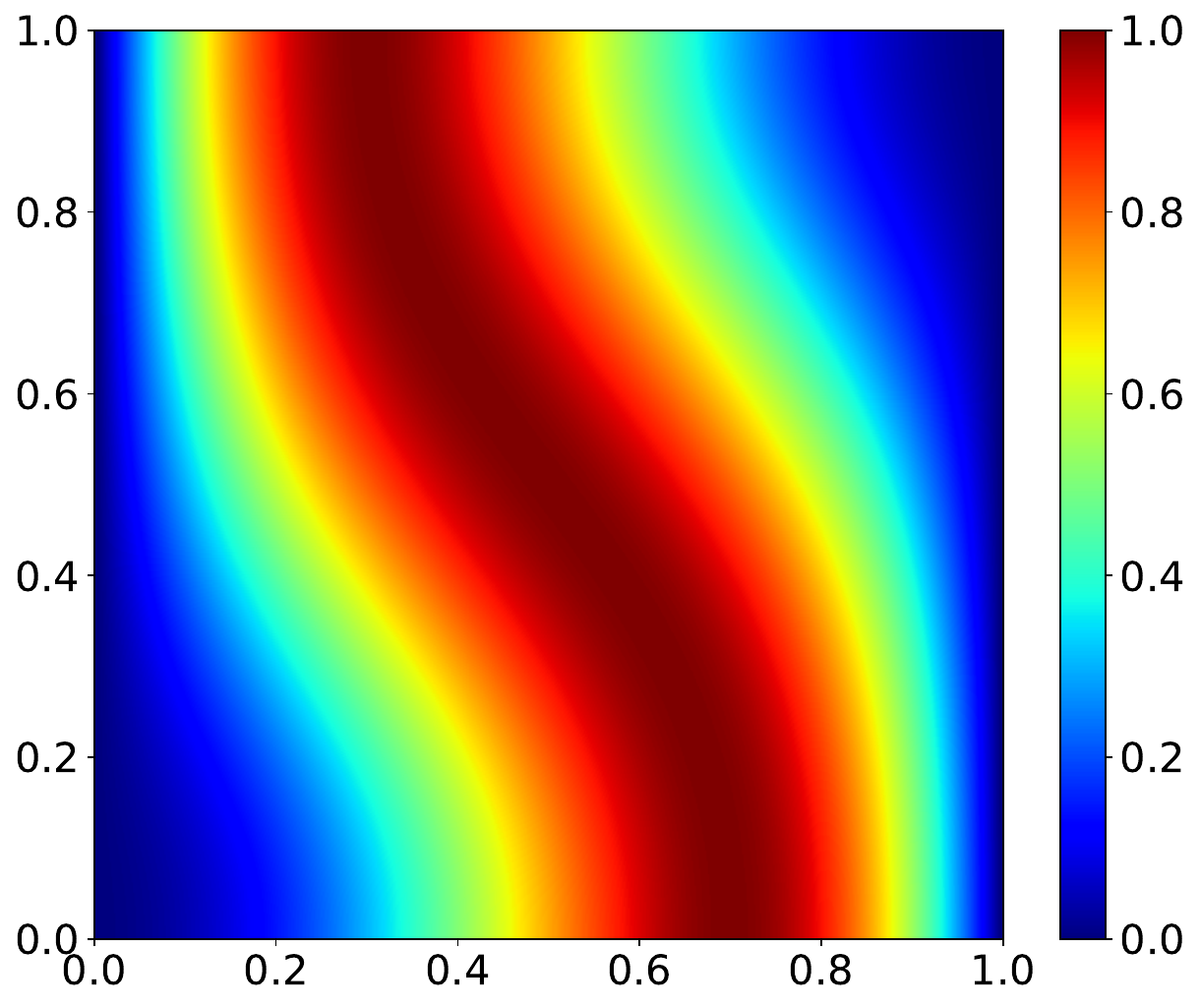}}\hspace*{0.04\textwidth}%
	\subfloat[$(\beta,\Two{\ell})=(8,8)$.]{\includegraphics[height=0.39\textwidth]{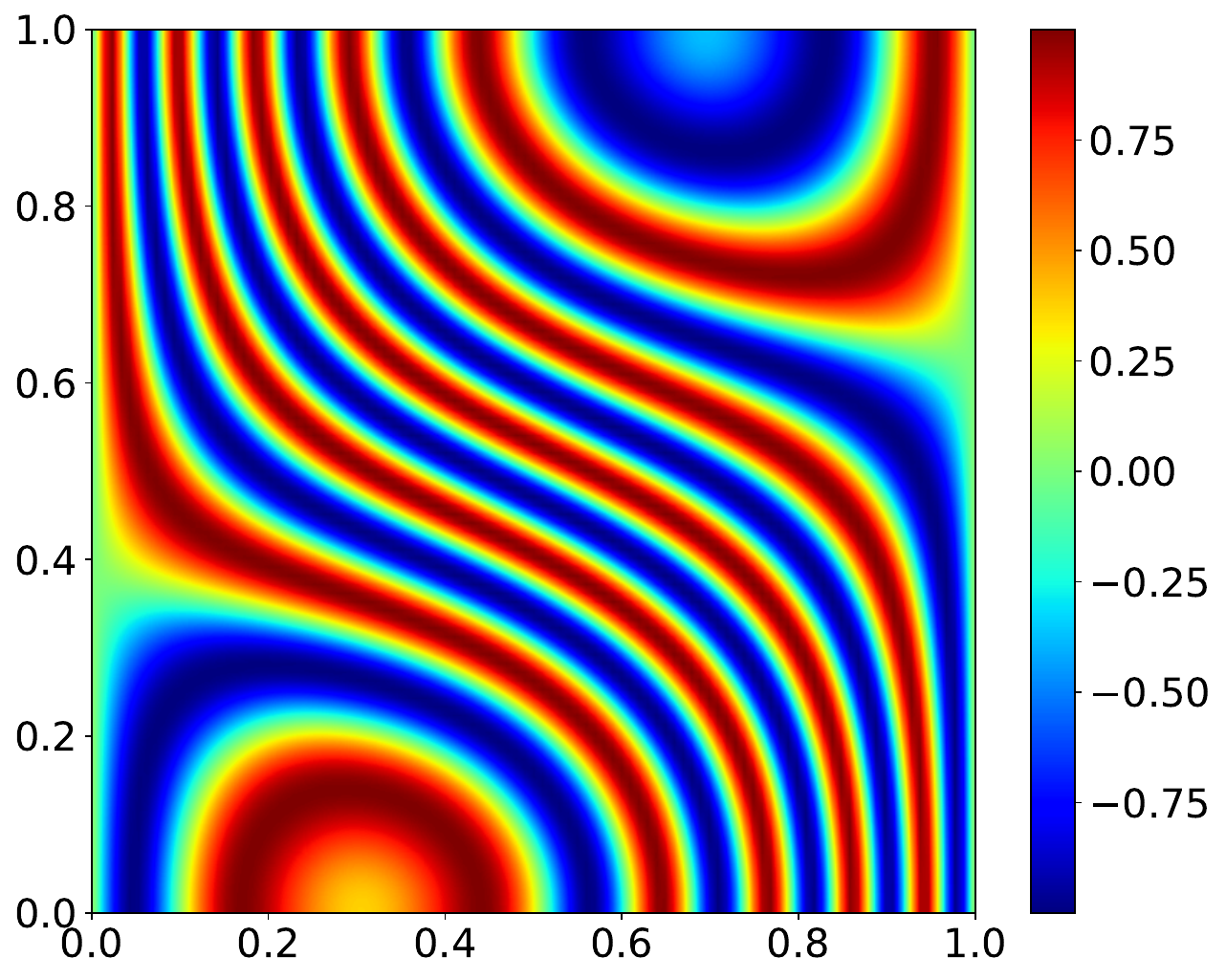}}
	\caption{Manufactured solution $u^0$ defined by Eq.~\eqref{eq:def:phi0} for different values of the parameters $\beta$ and \Two{$\ell$} as a function of $(x,y)\in \overline{\Omega}=[0,1]\times[0,1]$.}\label{fig:u0:manufactured}
	\vspace*{-0.5cm}
\end{figure}%
\begin{equation}\label{eq:def:Delta:phi0}
	\begin{aligned}
	&\begin{multlined}[0.85\textwidth]
	\Delta u^0(x,y) = -\Big(\Two{\ell} \cos \! \left(\pi  y \right) \beta  \big(-2+\left(x^{2}-x \right) \pi^{2}\big)\Big) \cos\big(\theta(x,y)\big)\\
	-\Two{\ell}^{2} \Big(\big(\beta  \left(2 x -1\right) \cos \! \left(\pi  y \right)+\pi \big)^{2}+\\
	\pi^{2} \beta^{2} \left(x^{2}-x \right)^{2} \left(\sin^{2}\left(\pi  y \right)\right)\Big)\, u^0(x,y)\,.
	\end{multlined}
	\end{aligned}
\end{equation}
\end{subequations}
 The iso-values of $\Delta u^0$ are plotted on Figs.~\ref{fig:Deltau0:manufactured} for different values of $\beta$ and $k$.
\begin{figure}[!ht]\centering
	\subfloat[$(\beta,\Two{\ell})=(3,1)$.]{\includegraphics[width=0.48\textwidth]{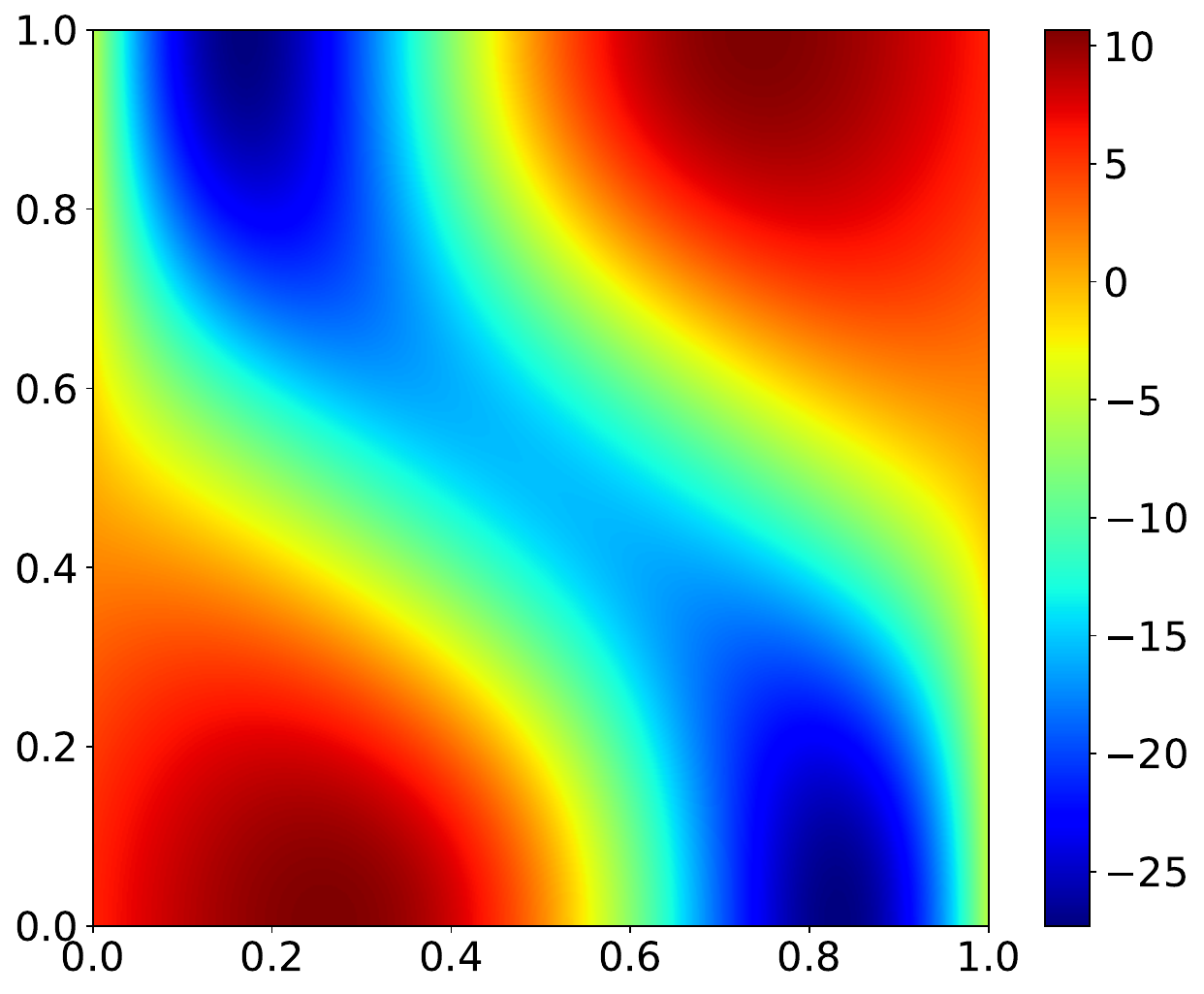}}\hspace*{0.04\textwidth}%
	\subfloat[$(\beta,\Two{\ell})=(8,8)$.]{\includegraphics[width=0.48\textwidth]{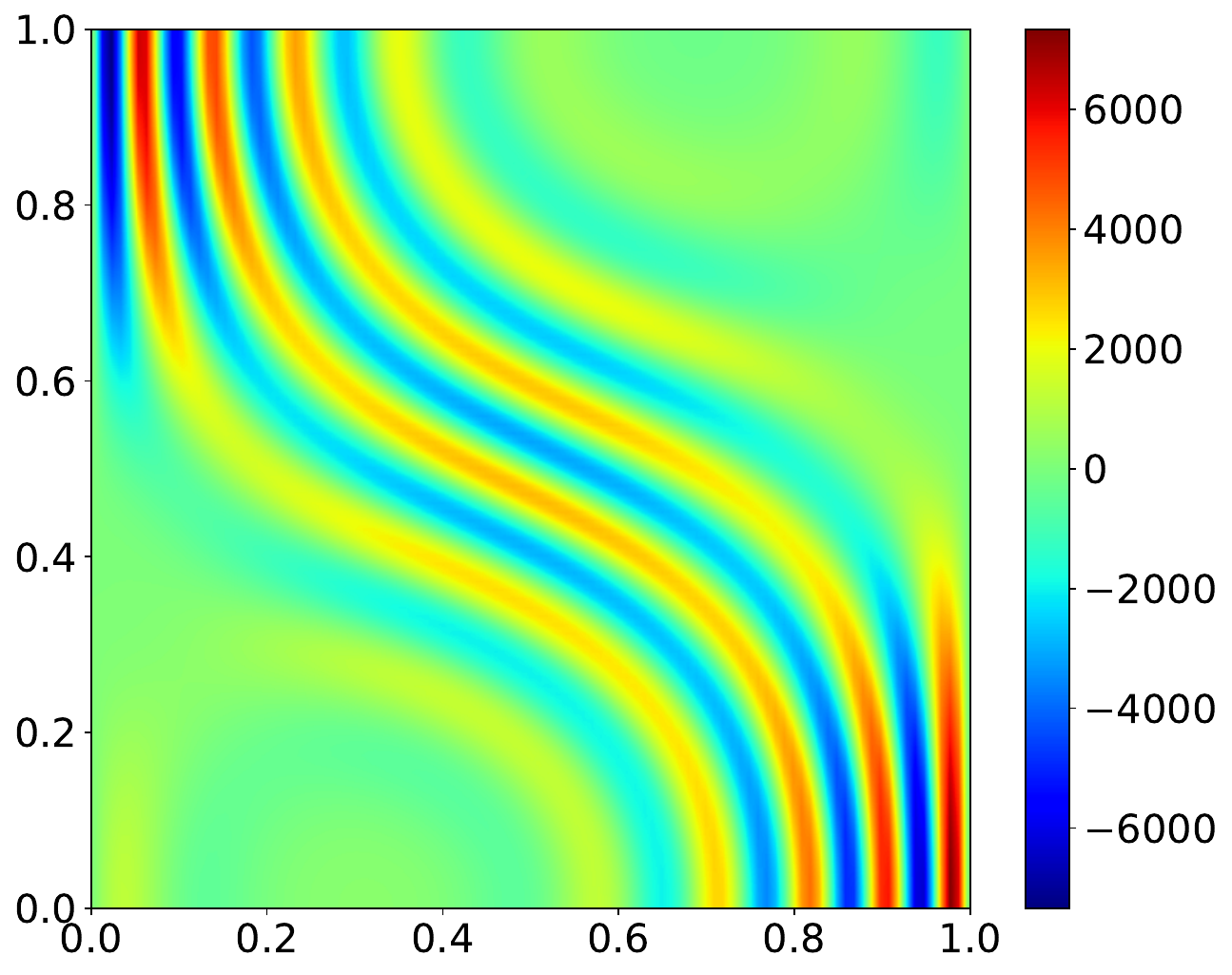}}
	\caption{Laplacian of the manufactured solution $u^0$ defined by Eq.~\eqref{eq:def:Delta:phi0} for different values of the parameters $\beta$ and \Two{$\ell$}.}\label{fig:Deltau0:manufactured}
	\vspace*{-0.5cm}
\end{figure}

The solution manufacturing is routinely implemented to provide a source term $f^\eps$ obtained by evaluating the anisotropic elliptic operator onto the solution $u^\eps = u^0 + \eps u^1$, where $u^1$ is an arbitrary function. This source term is then used to carry out a numerical approximation of $u^\eps$, the problem solution. The precision of the numerical approximation is estimated by comparisons against the analytic expression, to perform the verification of the numerical method. A similar procedure is implemented within this document to assess the precision of the flow transported solutions denoted $\phi^0$ and $\Delta \phi^0$ in the next sections.
\subsection{Flow transported solutions}
 The point values of the flow transported solutions are computed by integrations of the flows defined by Eqs.~\eqref{def:flow:0:backward} for $\phi^0$ the approximation \One{of} $u^0$ and Eq.~\eqref{eq:Lap:u0} for that of $\Delta u^0$ thanks to standard \Two{ODE} integrators.
The local absolute error related to the flow transported solution is defined as
\begin{subequations}\label{eqs:def:errors}
	\begin{align}
		\mathcal{E}^A(\x) = \left|u^0(\x)- \phi^0(\x) \right| \,, 
	\end{align}
where $u^0(\x)$ is the function defined by Eq.~\eqref{eq:def:phi0} evaluated at the point with coordinate $\x=(x,y)\in \Omega$ while $\phi^0(\x)$ is the flow transported approximation. A similar expression defines the error for $\Delta \phi^0(\x)$. The relative error cannot be computed at locations where the solution vanishes. A workaround consists in using the following definition
\begin{equation}
	\mathcal{E}^R (\x) = \left\{ \begin{array}[c]{ll}
		\displaystyle \frac{\mathcal{E}^A(\x)}{|u^0(\x)|} \quad &\text{ if } |u^0(\x)| > \varepsilon^\star \,, \\[1em]
		\mathcal{E}^A(\x) & \text{ otherwise} \,.
	\end{array}\right.
\end{equation}
In the sequel the absolute and relative errors will refer to $\mathcal{E}^A$ and $\mathcal{E}^R$ as defined by Eqs.~\eqref{eqs:def:errors}. The errors of the flow transported solutions $\phi^0$ and $\Delta \phi^0$ are plotted on Figs.~\ref{fig:flow:sol:1} and \ref{fig:flow:sol:2} for different values of the parameters $\beta$ and $k$.%
\end{subequations}
\begin{figure}[!ht]\centering
	\subfloat[Absolute error for $\phi^0$.\label{fig:flow:sol:1:a}]{\includegraphics[width=0.49\textwidth]{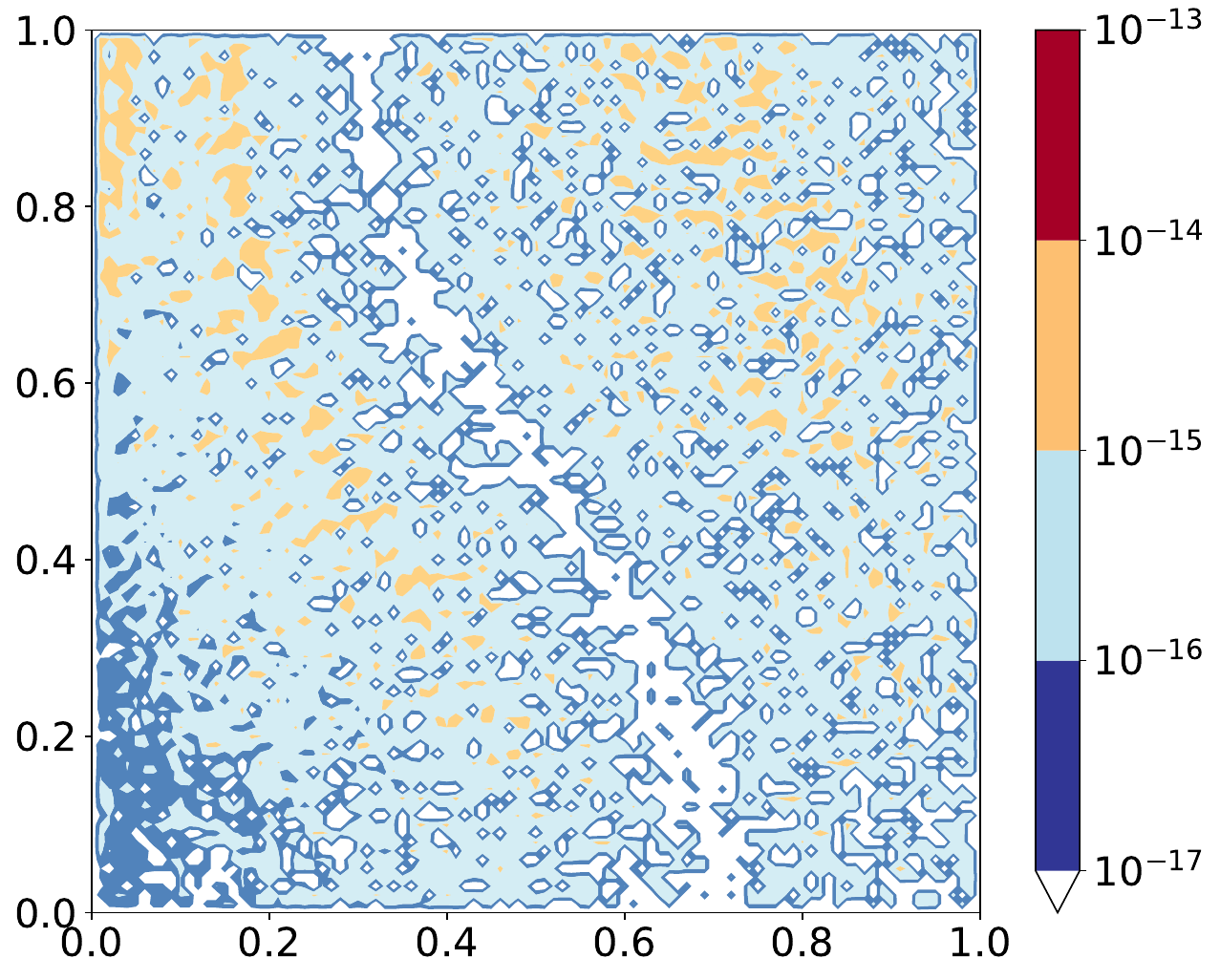}}\hspace*{0.02\textwidth}%
	\subfloat[Absolute error for $\Delta\phi^0$.\label{fig:flow:sol:1:b}]{\includegraphics[width=0.49\textwidth]{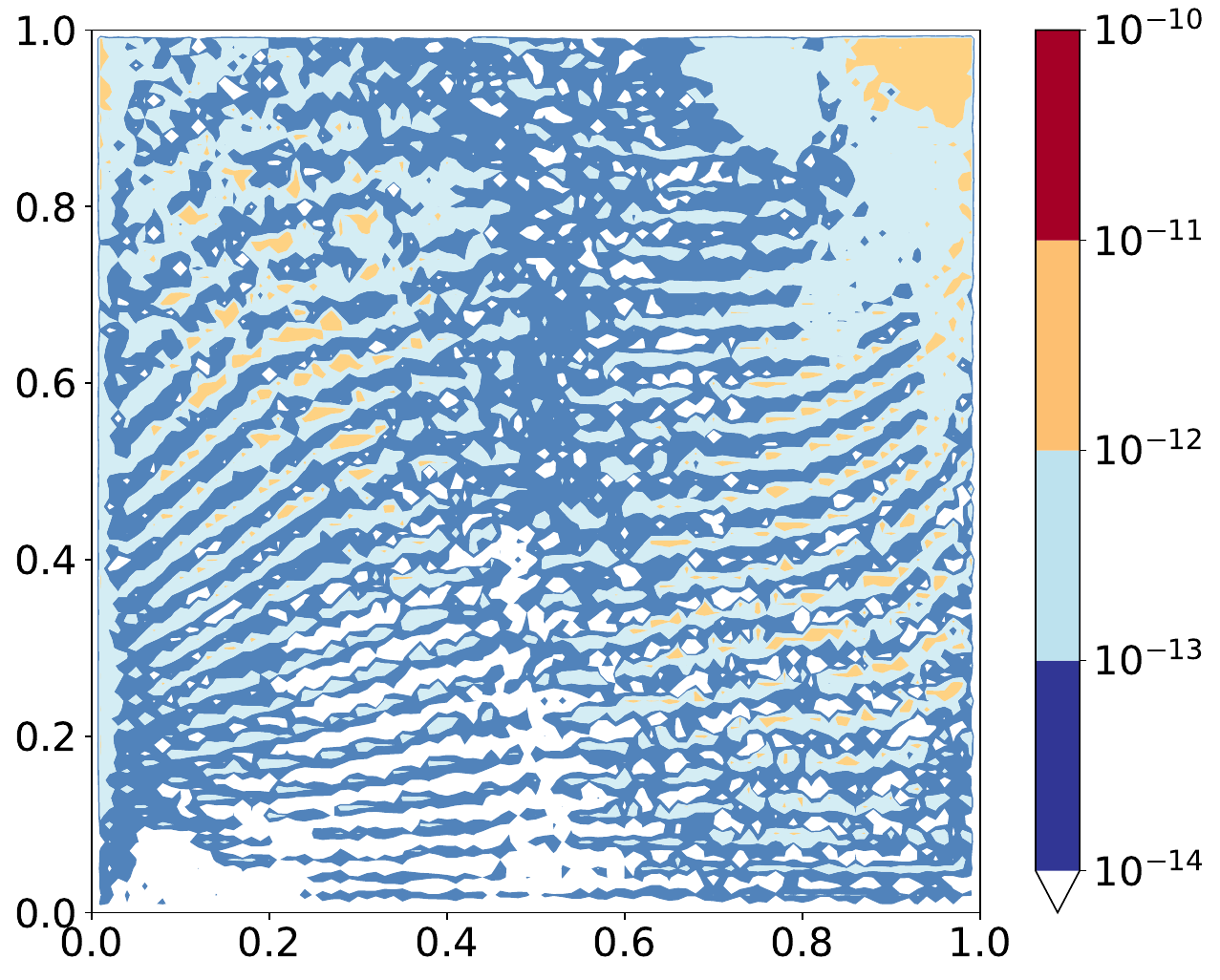}}
	
	\subfloat[Relative error for $\phi^0$.\label{fig:flow:sol:1:c}]{\includegraphics[width=0.49\textwidth]{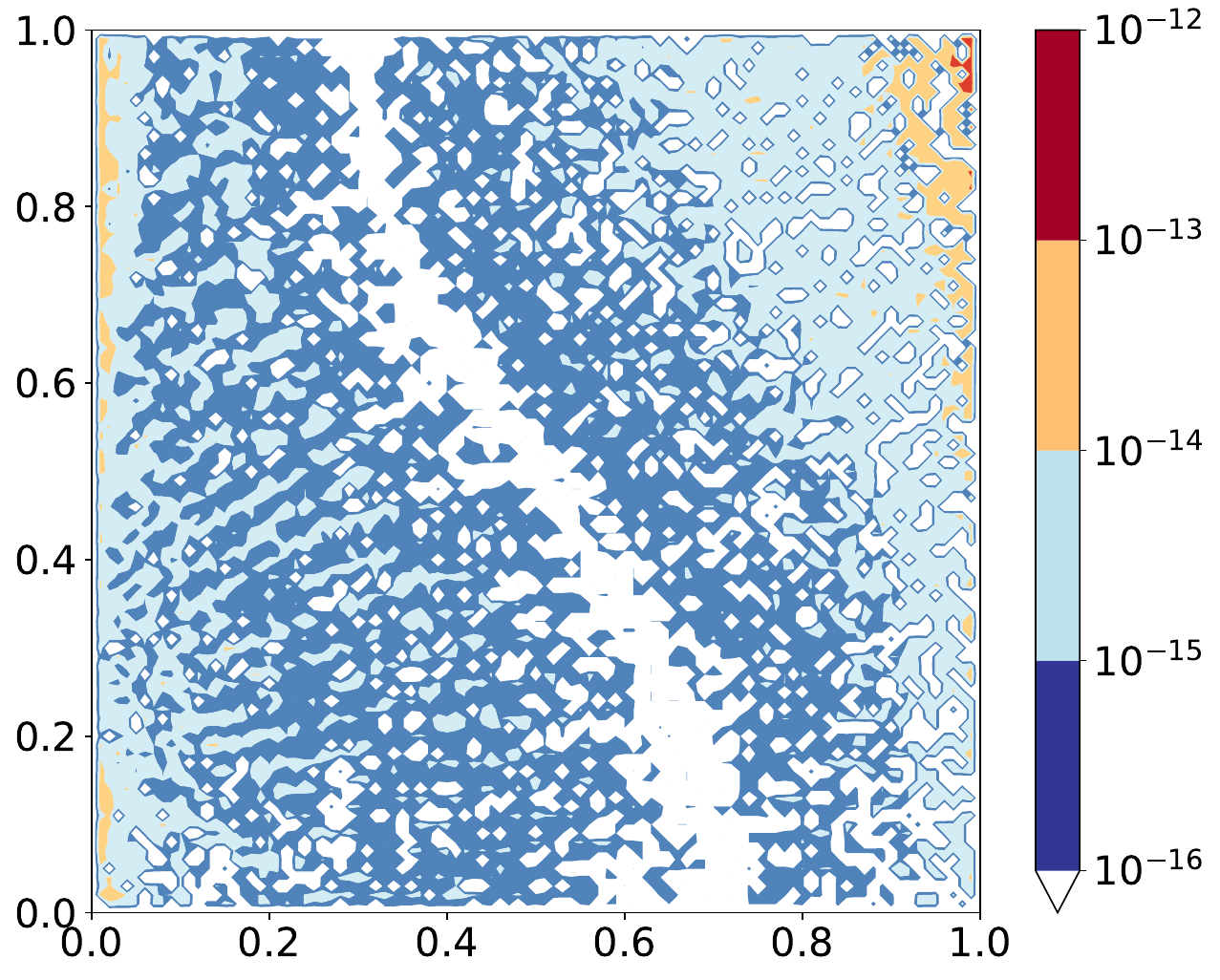}}\hspace*{0.02\textwidth}%
	\subfloat[Relative error for $\Delta\phi^0$.\label{fig:flow:sol:1:d}]{\includegraphics[width=0.49\textwidth]{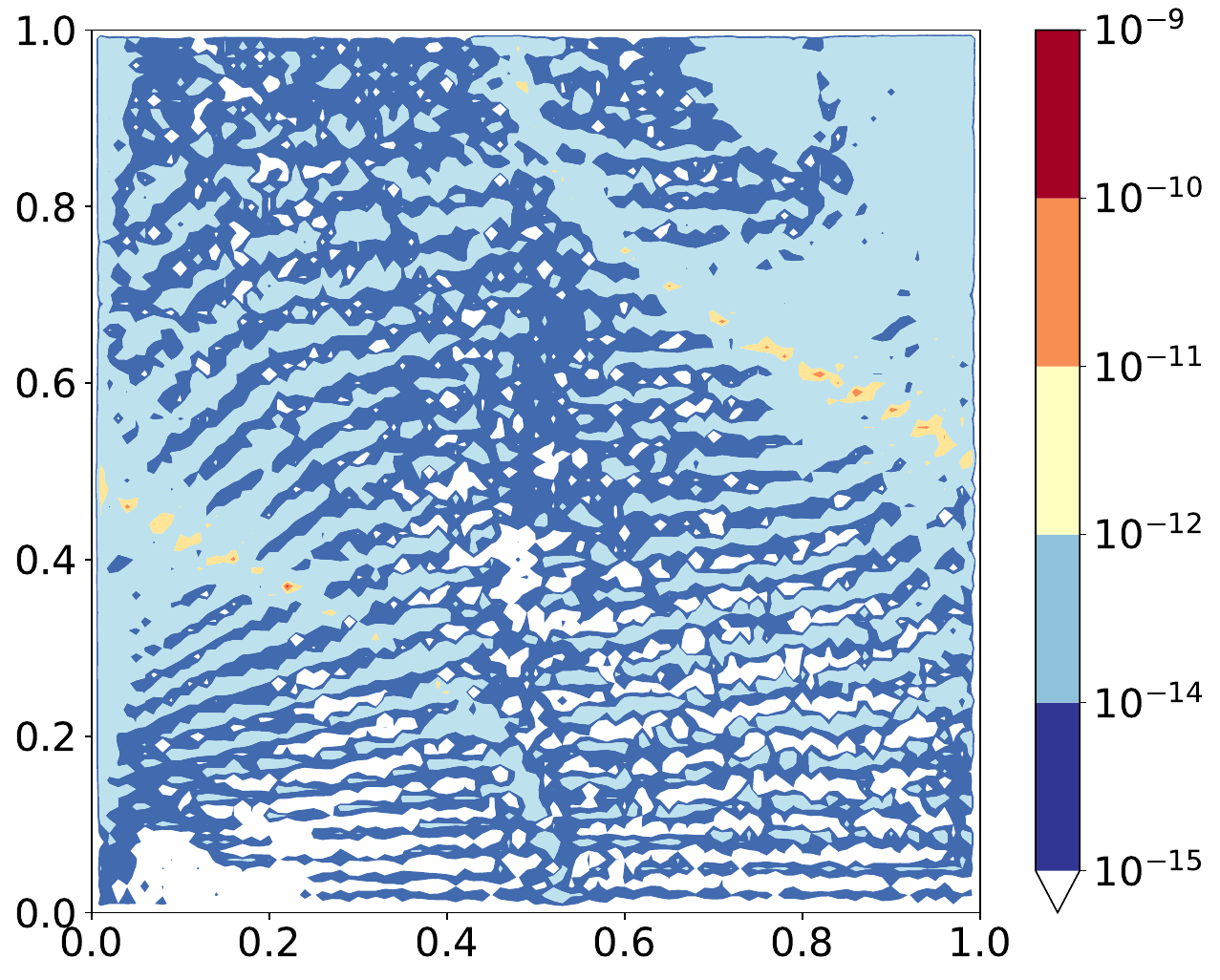}}
	\caption{Precision of the flow transported approximations for $u^0$  and $\Delta u^0$ defined by Eq.~\eqref{eqs:solution:def} for $(\beta,\Two{\ell})=(3,1)$.\label{fig:flow:sol:1}}
	\vspace*{-0.5cm}
\end{figure}
\begin{figure}[!ht]\centering
	\subfloat[Absolute error for $\phi^0$.\label{fig:flow:sol:2:a}]{\includegraphics[width=0.49\textwidth]{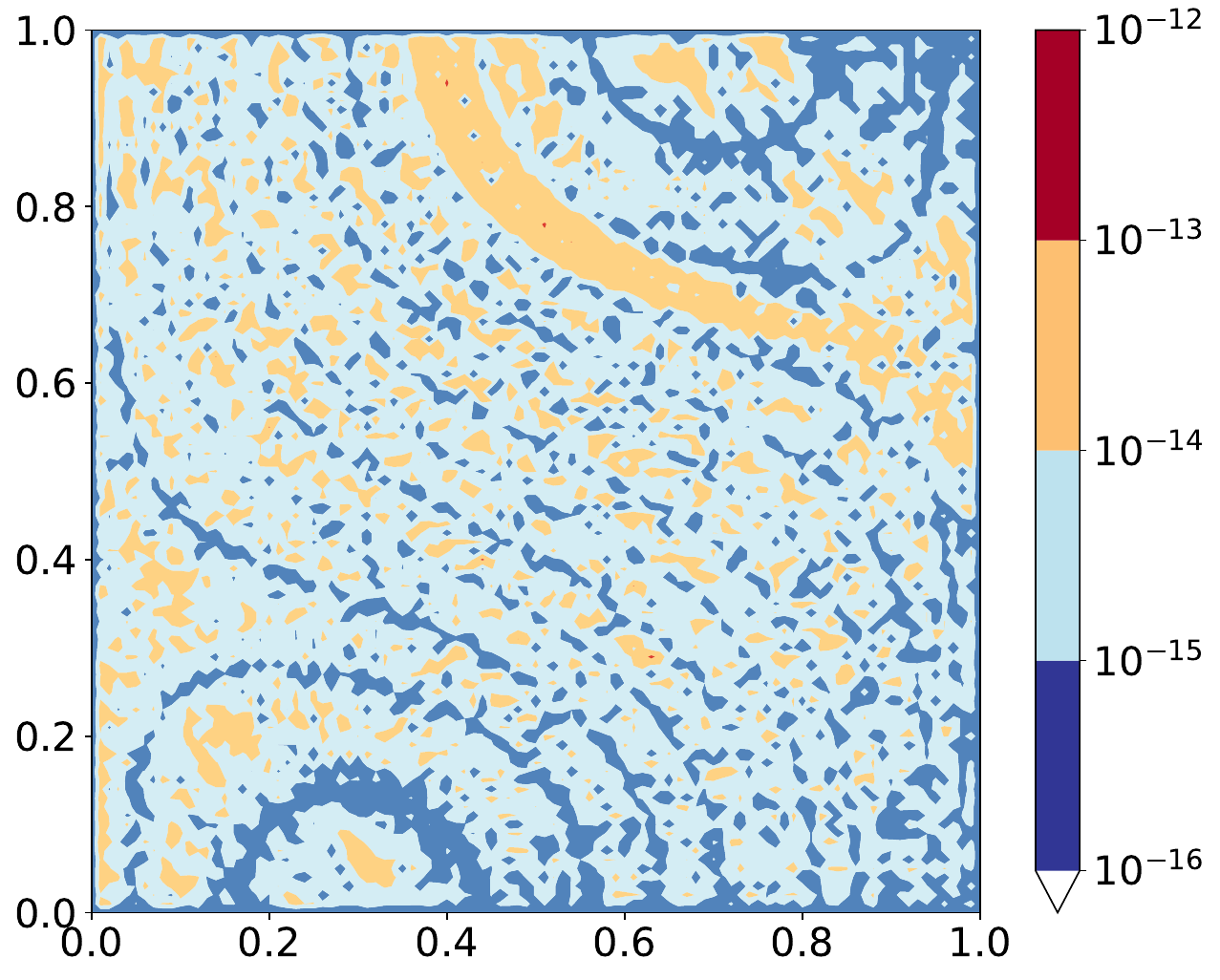}}\hspace*{0.02\textwidth}%
	\subfloat[Absolute error for $\Delta\phi^0$.\label{fig:flow:sol:2:b}]{\includegraphics[width=0.49\textwidth]{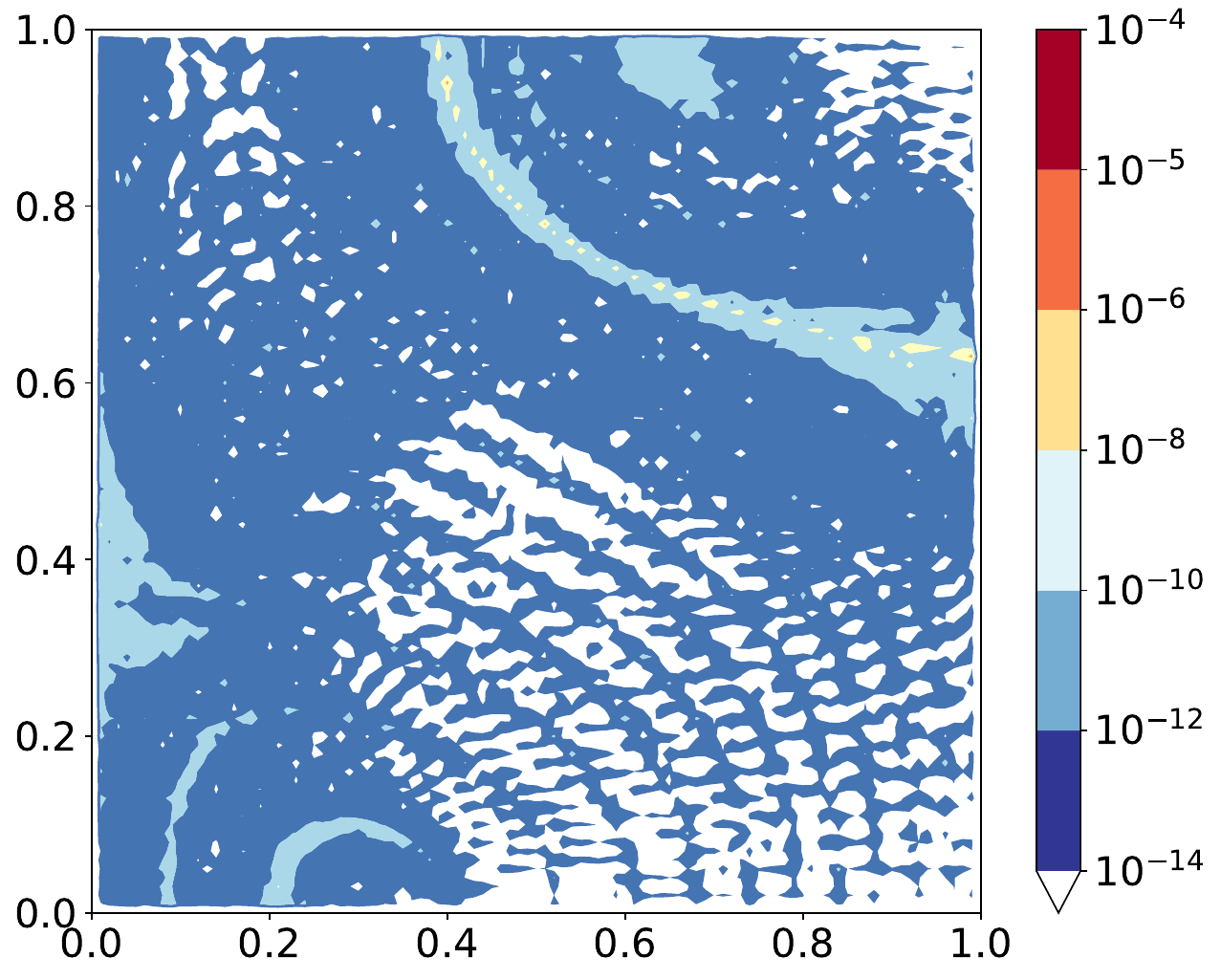}}
	
	\subfloat[Relative error for $\phi^0$.\label{fig:flow:sol:2:c}]{\includegraphics[width=0.49\textwidth]{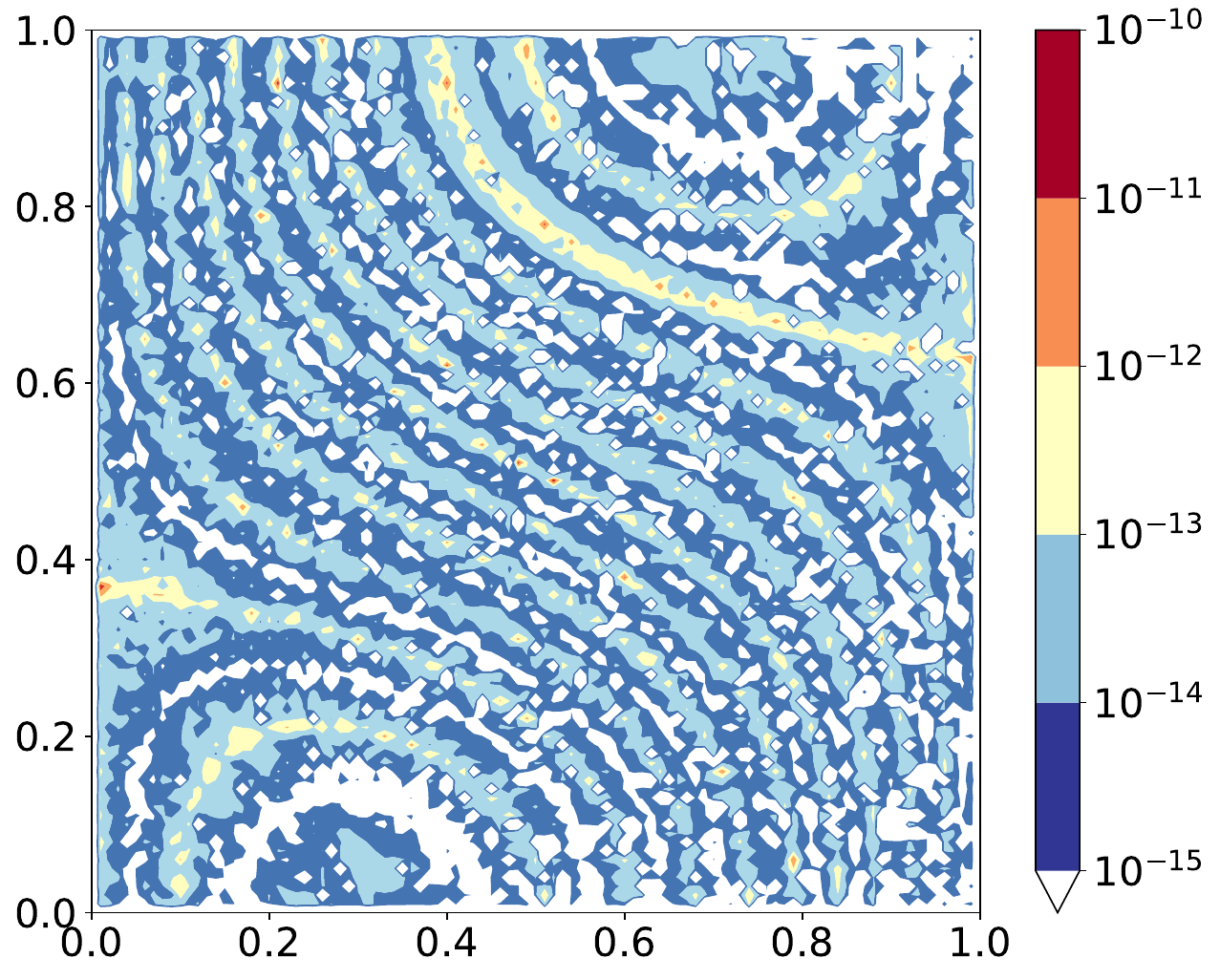}}\hspace*{0.02\textwidth}%
	\subfloat[Relative error for $\Delta\phi^0$.\label{fig:flow:sol:2:d}]{\includegraphics[width=0.49\textwidth]{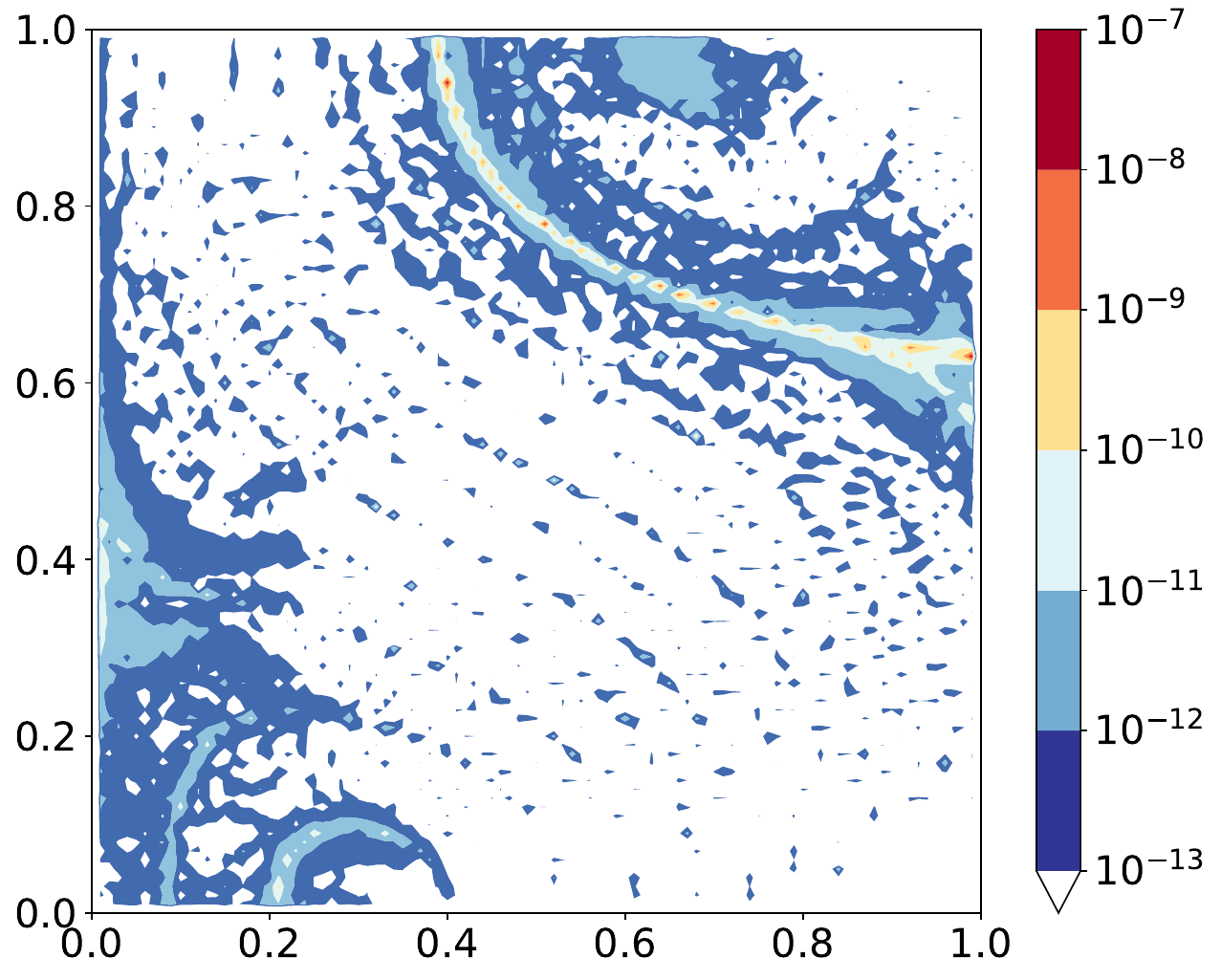}}
	\caption{Precision of the flow transported approximations of $u^0$  and $\Delta u^0$ defined by Eq.~\eqref{eqs:solution:def} for $(\beta,\Two{\ell})=(8,8)$.\label{fig:flow:sol:2}}
	\vspace*{-0.5cm}
\end{figure}
These plots display, on a color scale, the local errors on points distributed into the domain. The  computations are carried out using an explicit high order Runge-Kutta method. Different \Two{ODE} solvers have been tested, the computations carried out within this document are performed using the (9/8) Verner's \cite{verner2010numerically}  method proposed within the \verb+Julia+ package \cite{rackauckas_differentialequationsjl_2017})  with an absolute tolerance $\tau_\text{ODE}$ set to $10^{-12}$.

The parameters $\beta$ and \Two{$\ell$} parametrize the variations in the $b$-field as well as the perpendicular gradients of the solution, the higher their values the more demanding the benchmark for the accuracy of the transported solutions. 
The relative error on $\phi^0$ increases slightly for  $(\beta,\Two{\ell})=(8,8)$ as observed on Figs.~\ref{fig:flow:sol:2:a} and \ref{fig:flow:sol:2:c},  as compared to Figs.~\ref{fig:flow:sol:1:a} and \ref{fig:flow:sol:1:c} obtained with $(\beta,\Two{\ell})=(3,1)$.

The precision of the ODE integrator does not readily give access to an estimator of the flow transported solution precision. Nonetheless, the accuracy of $\phi^0(\x)$ may be estimated from that of the ODE integrator. The flow transported solution is obtained by computing $\x^\star$ the approximated value of $\x_0$ the foot of the field line issued from $\x$ by integrating the backward flow $\hat\Phi_t$, yielding, owing to the property of $u^0$ to be constant along the $b$-field lines, $\phi^0(\x) = u^0(\x^\star)$.
The ODE integrator precision threshold denoted $\tau_\text{ODE}$ provides a control on the accuracy of $\x^\star$ with
\begin{equation}
	\left\| \x^\star - \x_0\right\| \leq \tau_\text{ODE} \,.
\end{equation}
Since $\x_0$ is located on one domain boundary for which the value of the coordinate $y$ is known and equal to $\yp $, the approximated value $\x^\star$ is substituted with $\x^\star = (x^\star,\yp )$ so that
the precision of the transported  approximation reduces to
\begin{equation}\label{eq:phi0:prec:estimate}
	\begin{aligned}
		\left|\phi^0(\x) - u^0(\x) \right| = \left|u^0(\x^\star) - u^0(\x_0) \right| &= \left|u^0(x^\star,\yp ) - u^0(x_0,\yp )  \right|\,,\\
		&\leq \varepsilon_\text{ODE}= \tau_\text{ODE} \, \left| \frac{\partial}{\partial x} u^0 (x_0,\yp ) \right| \,. 
	\end{aligned}
\end{equation}
From this estimate, the precision of the numerical approximation is anticipated to decrease linearly with the ODE integrator threshold ($\tau_\text{ODE}$), while deteriorating with the increase of the solution variations, hence with the values of both $\beta$ and $k$. Indeed, the derivatives of the solution trace increases with the values of these parameters, a feature illustrated by the plots of Fig.~\ref{fig:trace:derivative}.%
\begin{figure}[!ht]\centering
	\subfloat[$\beta\in\{3,8\}, \, \Two{\ell}=1$.]{\includegraphics[height=0.33\textwidth]{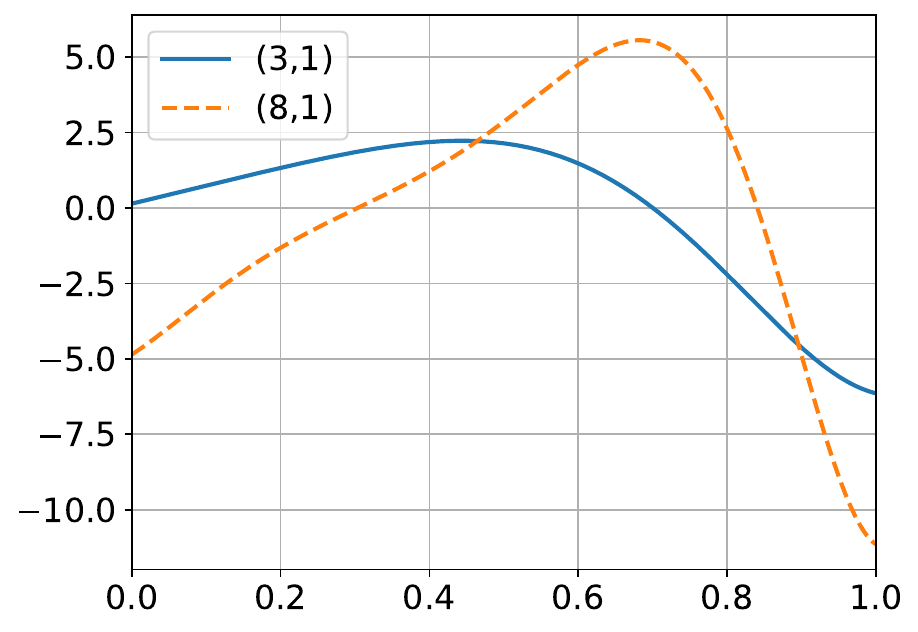}}\hspace*{0.05\textwidth}%
	\subfloat[$\beta\in\{3,8\}, \, \Two{\ell}=8$.]{\includegraphics[height=0.33\textwidth]{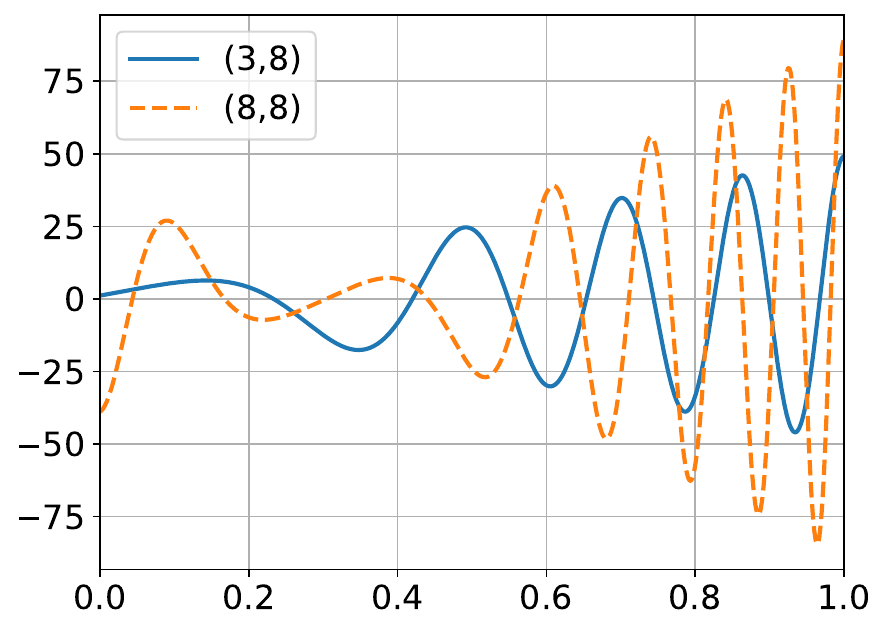}}
	\caption{Derivative of the solution trace on the boundary $y=0$ ($\partial_x u^0(x,0)$) for different values of the parameters $(\beta,\Two{\ell})$ as a function of $x$ , $u^0$ being defined by Eq.~\eqref{eq:def:phi0}.}\label{fig:trace:derivative}
	\vspace*{-0.5cm}
\end{figure}
The accuracy of the solution approximation as a function of $\tau_\text{ODE}$ is plotted on Figs.~\ref{fig:flow:sol:absolute:errors} and \ref{fig:flow:sol:relative:errors} for the absolute and relative errors.
\begin{figure}[!ht]\centering
	\subfloat[Absolute error $\ell_1$-norm for $\phi^0$.\label{fig:flow:sol:absolute:errors:a}]
	{\includegraphics[width=0.49\textwidth]{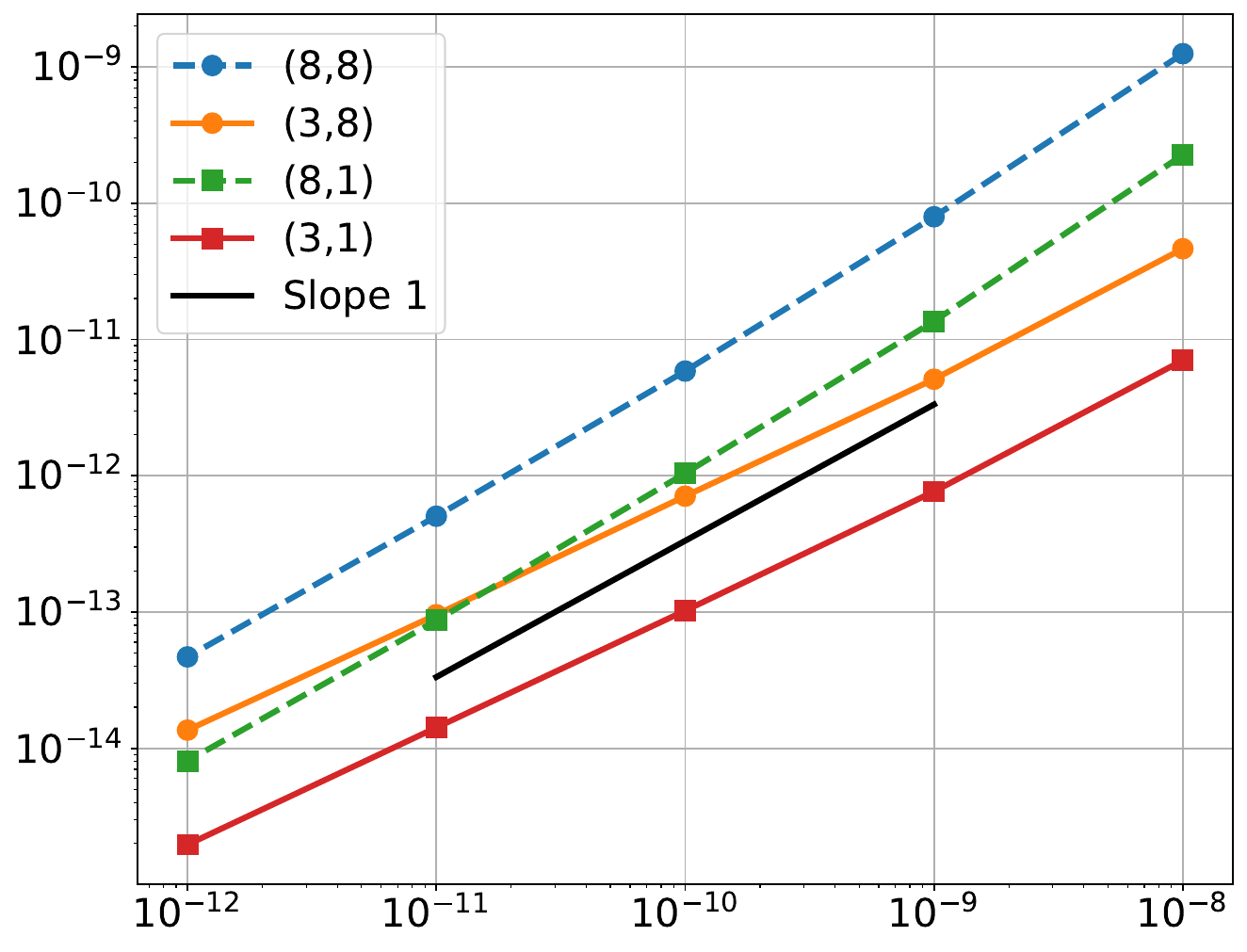}}\hspace*{0.02\textwidth}%
	\subfloat[Absolute error in $\ell_\infty$-norm for $\phi^0$.\label{fig:flow:sol:absolute:errors:b}]
	{\includegraphics[width=0.49\textwidth]{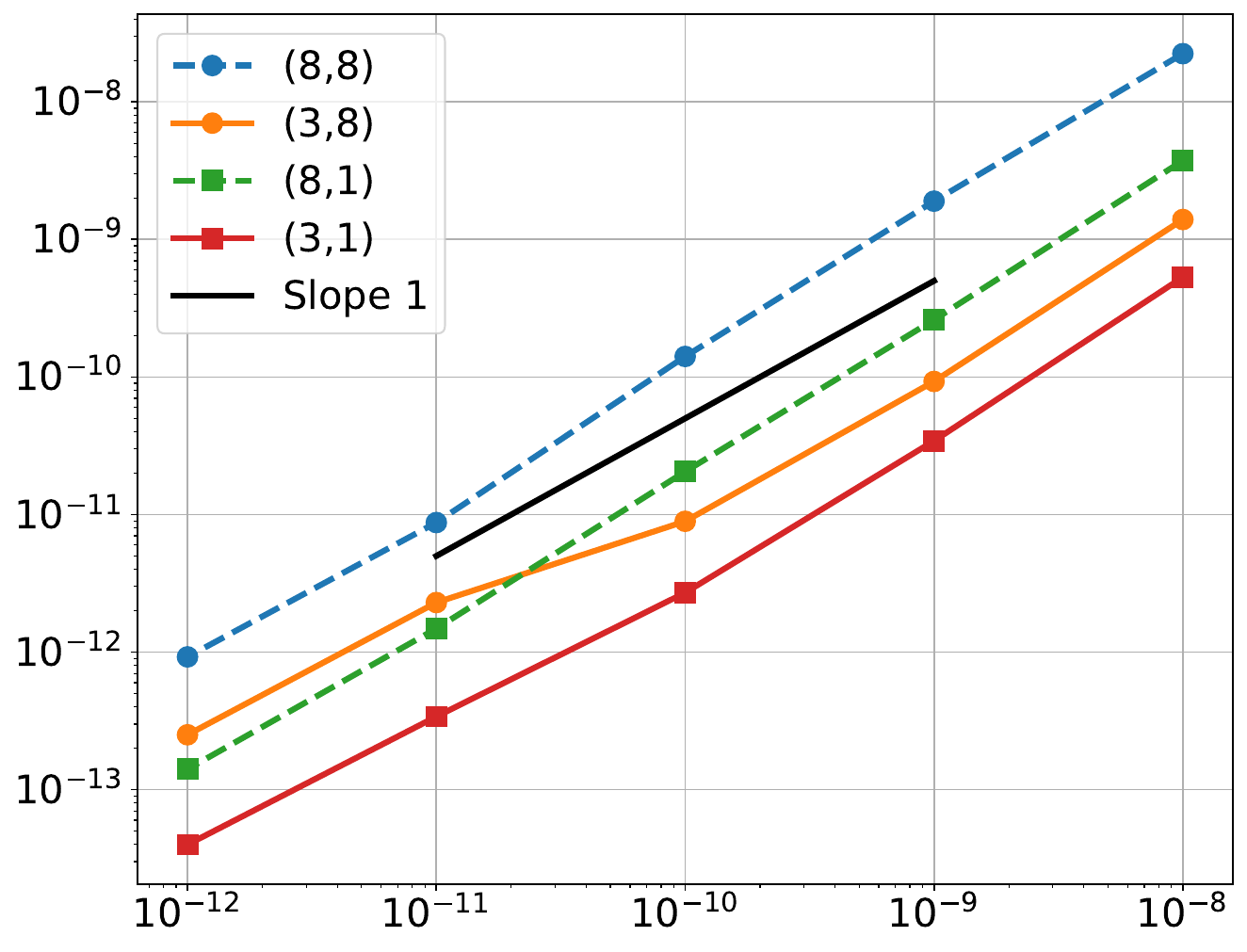}}
	
	\subfloat[Absolute error $\ell_1$-norm for $\Delta \phi^0$.\label{fig:flow:sol:absolute:errors:c}]{\includegraphics[width=0.49\textwidth]{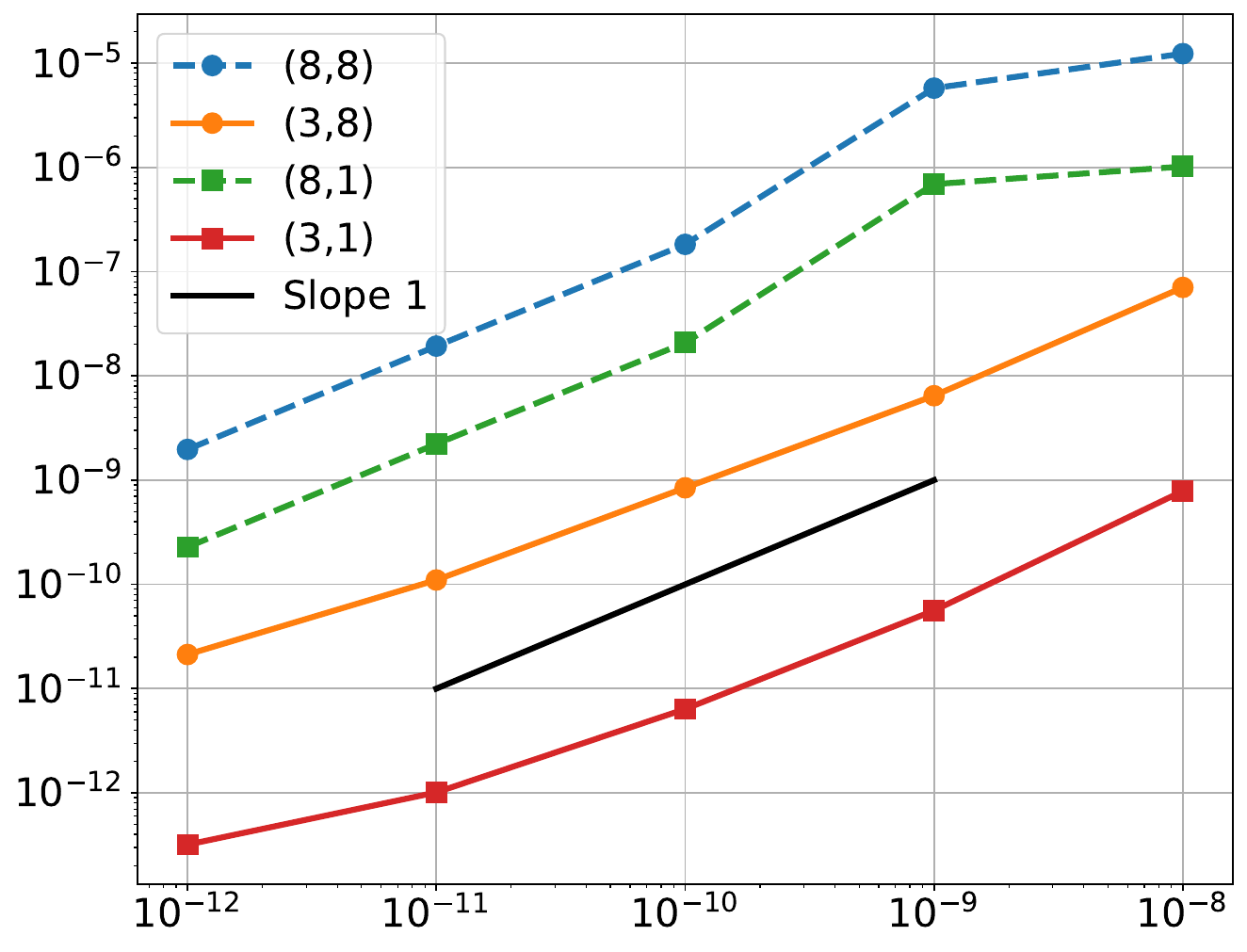}}\hspace*{0.02\textwidth}%
	\subfloat[Absolute error $\ell_\infty$-norm for $\Delta\phi^0$.\label{fig:flow:sol:absolute:errors:d}]{\includegraphics[width=0.49\textwidth]{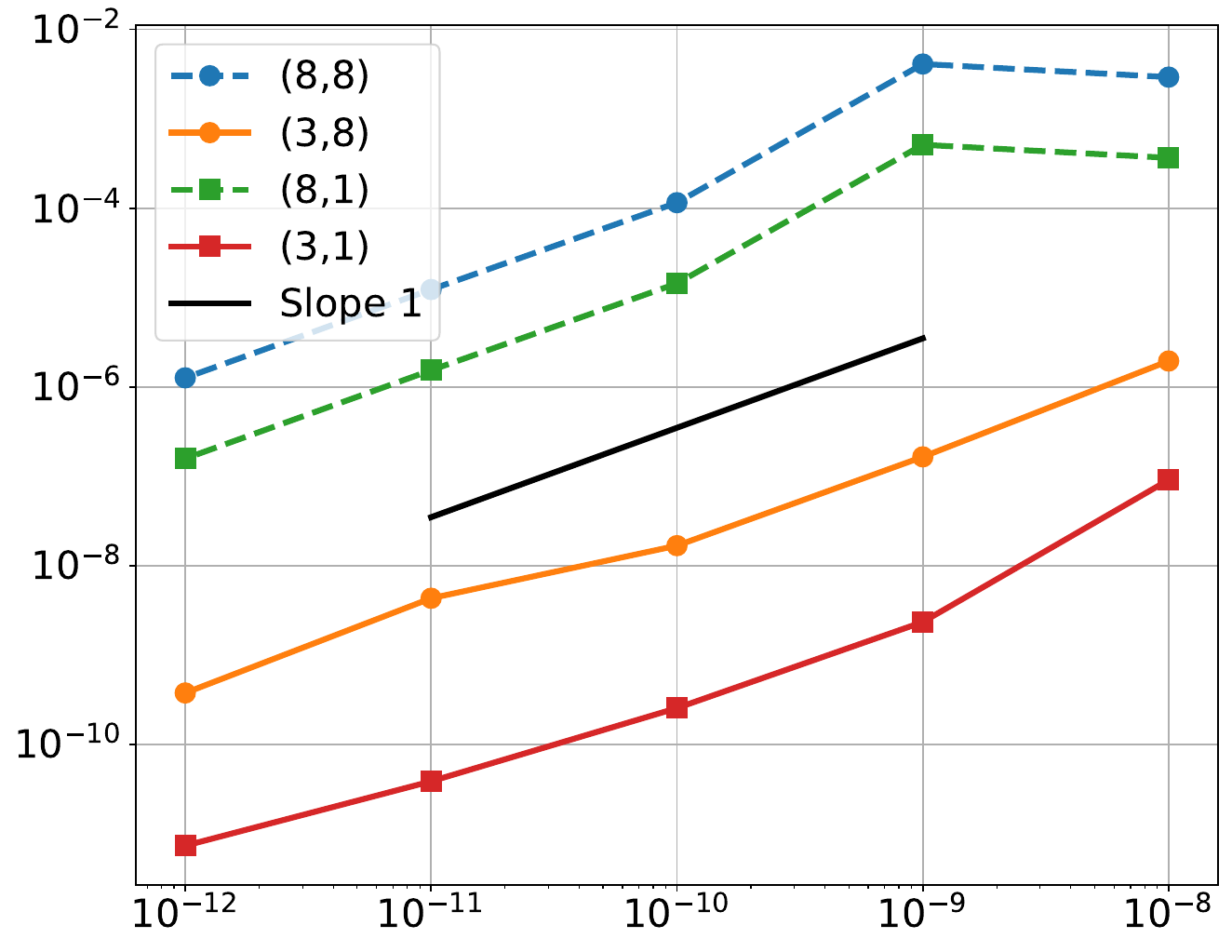}}
	\caption{Precision of the flow transported solutions $\phi^0$ and $\Delta \phi^0$ defined by Eqs.~\eqref{eqs:solution:def}: absolute error norms as functions of the ODEs numerical integrator precision threshold $\tau_\text{ODE}$. The tuples in the legends are the values of the parameters $\beta$ and $\Two{\ell}$.}\label{fig:flow:sol:absolute:errors}
	\vspace*{-0.5cm}
\end{figure}
\begin{figure}[!ht]\centering
	\subfloat[Relative error $\ell_1$-norm for $\phi^0$.]{\includegraphics[width=0.49\textwidth]{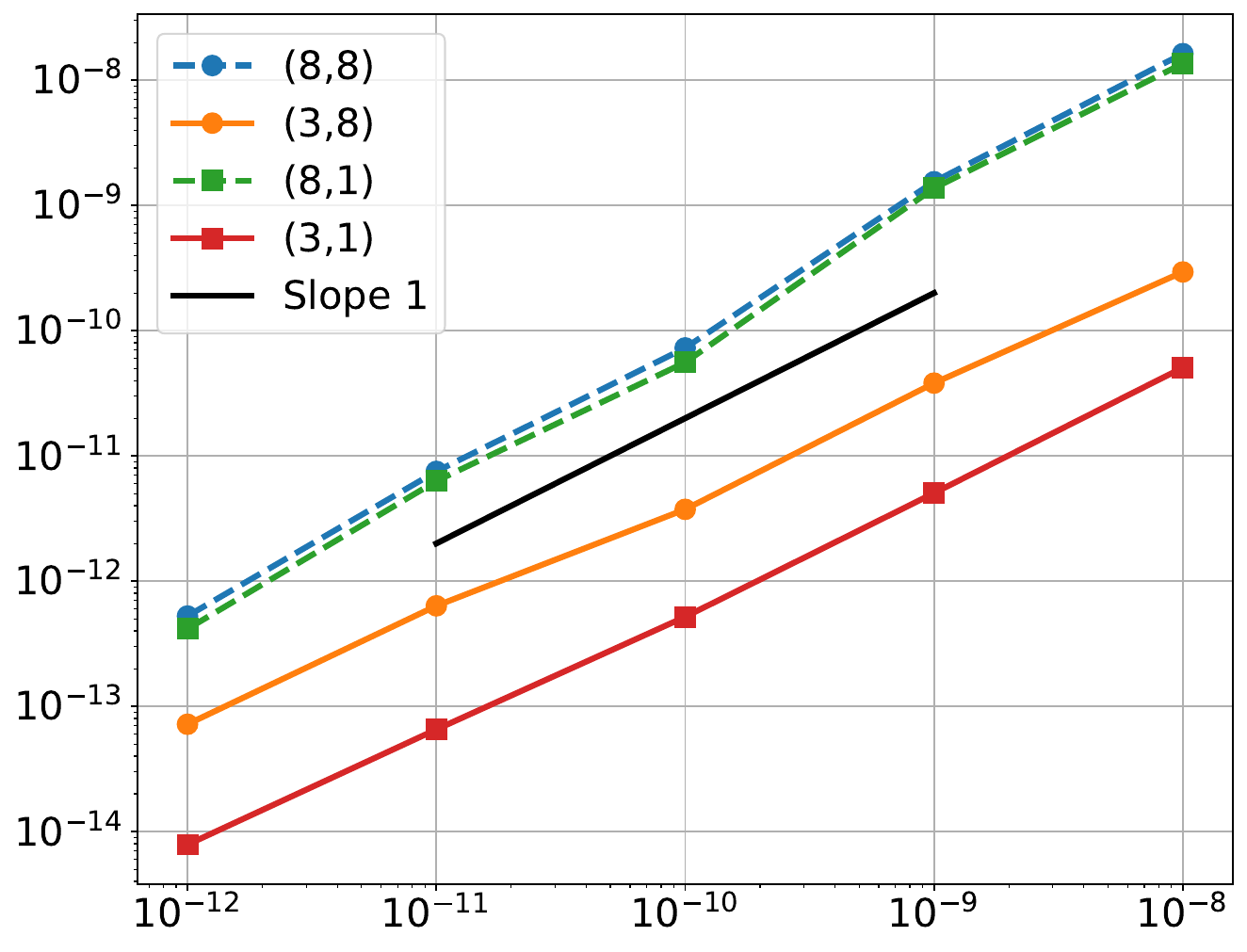}}\hspace*{0.02\textwidth}%
	\subfloat[Relative error in $\ell_\infty$-norm for $\phi^0$.]{\includegraphics[width=0.49\textwidth]{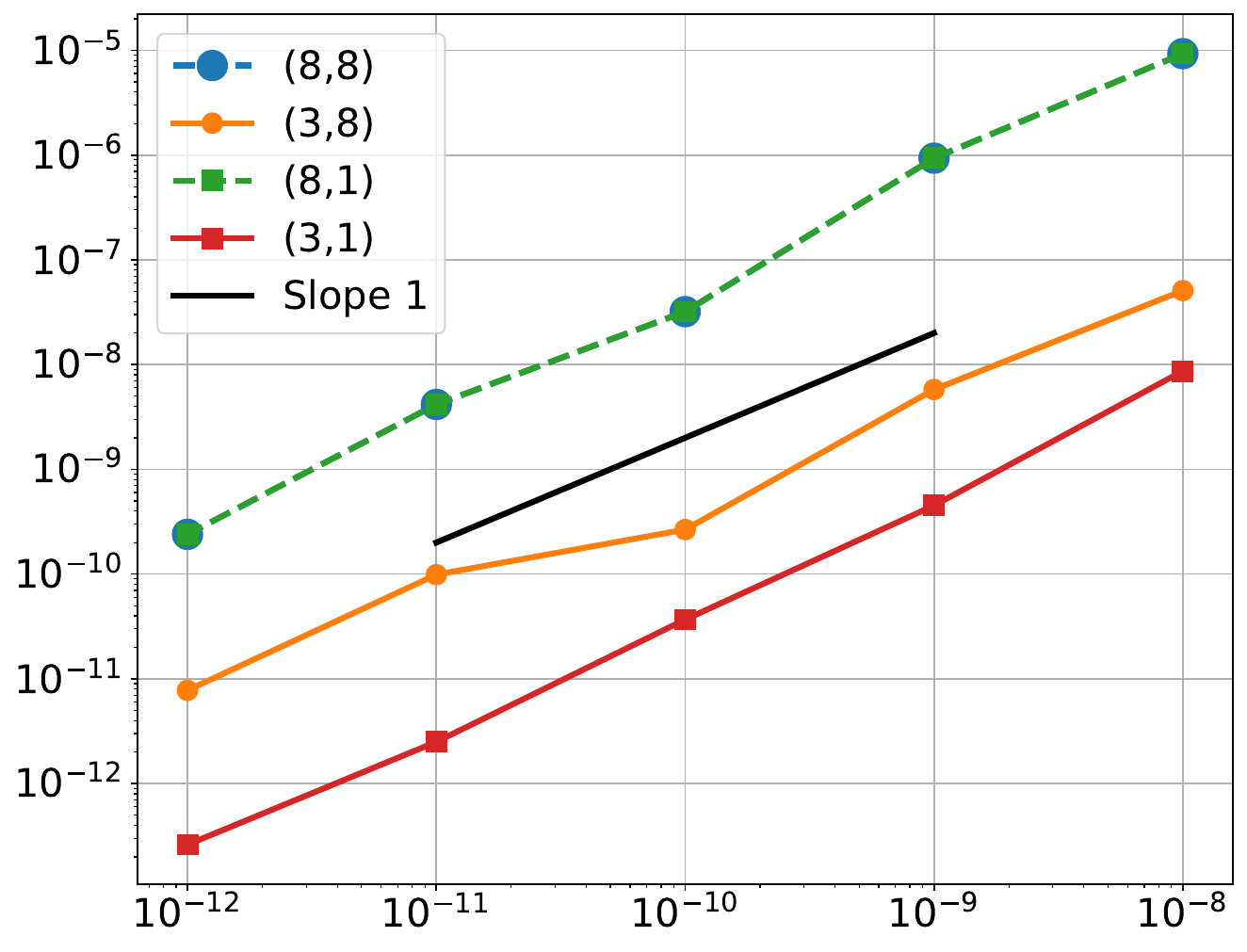}}
	
	\subfloat[Relative error $\ell_1$-norm for $\Delta \phi^0$.]{\includegraphics[width=0.49\textwidth]{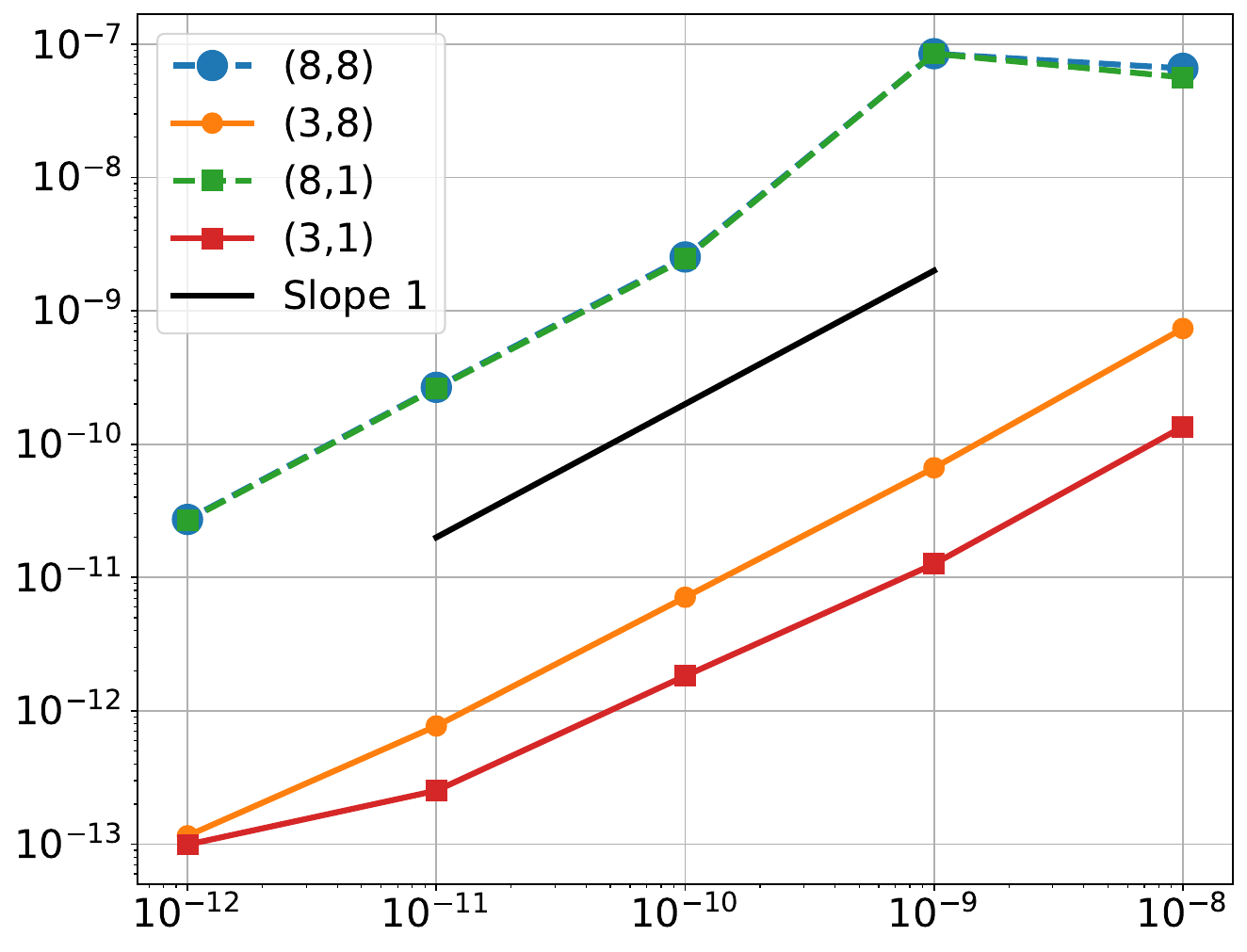}}\hspace*{0.02\textwidth}%
	\subfloat[Relative error $\ell_\infty$-norm for $\Delta\phi^0$.]{\includegraphics[width=0.49\textwidth]{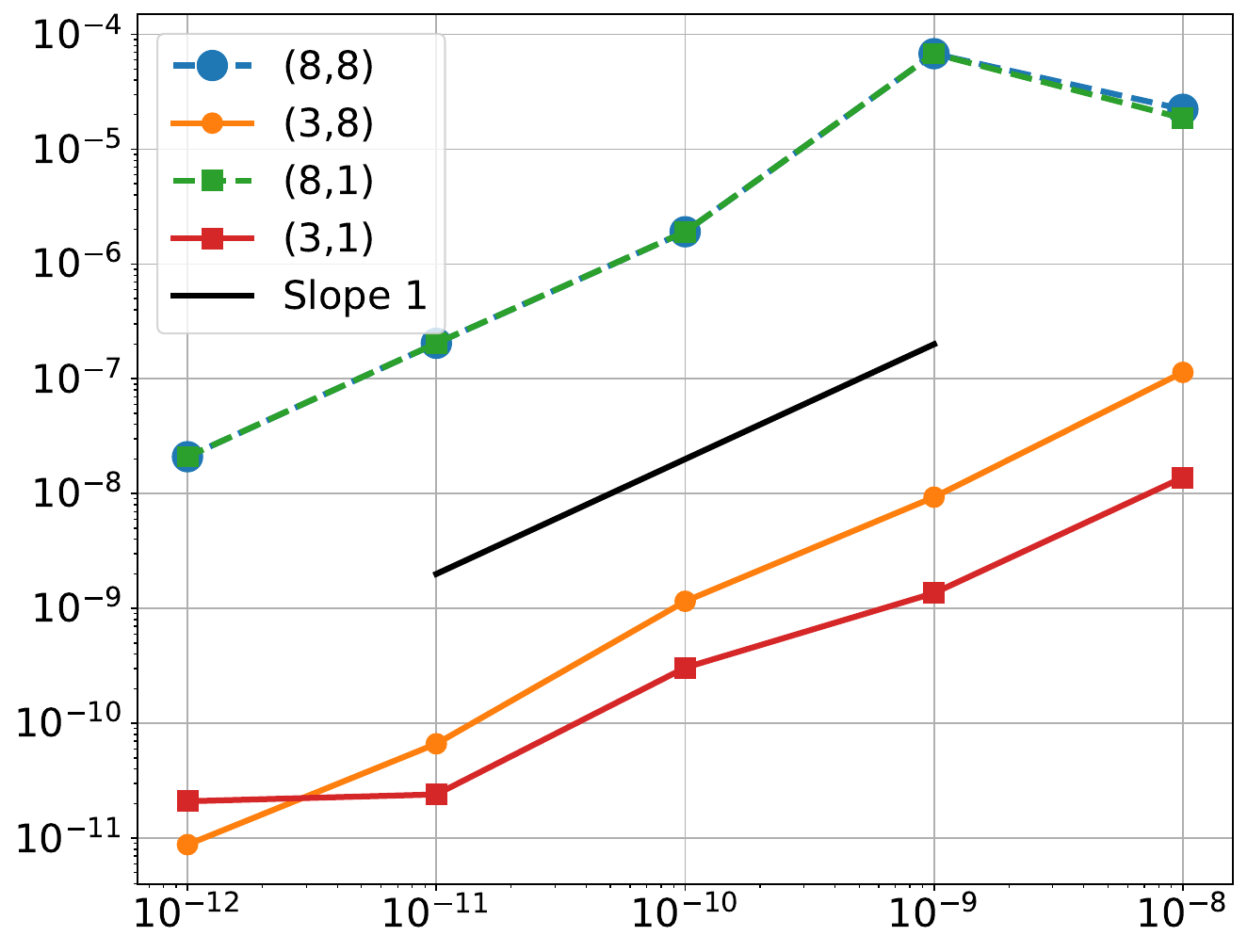}}
	\caption{Precision of the flow transported solutions $\phi^0$ and $\Delta \phi^0$ defined by Eqs.~\eqref{eqs:solution:def}: relative error norms as functions of the ODEs numerical integrator precision threshold $\tau_\text{ODE}$. The tuples in the legends are the values of the parameters $\beta$ and $\Two{\ell}$.}\label{fig:flow:sol:relative:errors}
	\vspace*{-0.5cm}
\end{figure}
The linear decrease of the error with the precision tolerance of the ODE integrator is ascertained by these plots. The increase of the error with the values of the parameters $\beta$ and $k$ controlling the derivatives of $u^0$ on the boundary is also in line with the estimate stated by Eq.~\eqref{eq:phi0:prec:estimate}.

Regarding the anisotropic problem source term contribution, $\Delta u^0$, the absolute and relatives errors of the flow transported approximation are represented on a color scale on Figs.~\ref{fig:flow:sol:1:b}, \ref{fig:flow:sol:1:d}, \ref{fig:flow:sol:2:b} and \ref{fig:flow:sol:2:d}. The same dependency to the ODE integrator tolerance and magnitude of the derivative of solution trace on the boundary is observed on the plots of Figs.~\ref{fig:flow:sol:absolute:errors} and \ref{fig:flow:sol:relative:errors}. The question of the derivation of an estimate of this quantity precision is investigated in the following section.

\subsection{An estimator of the precision of the flow transported solutions}
The precision of the flow transported solution $\phi^0$ may be estimated by evaluating its parallel gradient. Assuming that $u^0$ is regular enough ($u^0\in C^{m+1}(\Omega)$) a high order finite difference scheme may be used to compute the parallel gradient of the flow transported solution. \Two{Let $\partial_x^{h,m}$ and $\partial_y^{h,m}$ denote the discrete partial differential operators with respect to the $x$ and $y$ coordinates, where $h$ represents the spatial step size and $m$ indicates the order of approximation accuracy. These operators are defined as follows:}
\begin{align}\label{eq:Fornberg:formulas:1}
	&\big(\partial_x^{h,m} u^0\big)(x,y) = \frac{1}{h}\sum_{k={-m/2}}^{k=m/2} \omega_k^{(1)}\, u^0(x+k\cdot h,y) = \frac{\partial u^0}{\partial x}(x,y)+ \mathcal{O}(h^m) \,,\\
	&\nabla^{h,m} u^0(x,y) = \Big(\left(\partial_x^{h,m}u^0\right)(x,y), \left(\partial_y^{h,m}u^0\right)(x,y) \Big)^T\,.
\end{align}
\Two{The operator $\partial_y^{h,m}$ is defined analogously to $\partial_x^{h,m}$utilizing function evaluations at distinct $y$-coordinates. These discrete operators incorporate the weights $(\omega_k^{(1)})_k$ derived from Fornberg's formulas for $m\in \{4,6,8\}$. The operator $\partial_y^{h,m}$ is defined analogously to $\partial_x^{h,m}$ by evaluating the function at distinct y-coordinates.} 
Owing to the estimate stated by Eqs.~\eqref{eq:phi0:prec:estimate} and \eqref{eq:Fornberg:formulas:1}, the following bound holds true for the estimator of the flow transported solution parallel gradient 
\begin{equation}\label{eq:estimate:gradPara}
	\begin{aligned}
	\left|\big(b\cdot\nabla^{h,m}\phi^0\big)(x,y)\right| &\leq \mathcal{O}(h^m) + \frac{\widetilde{\varepsilon}_\text{ODE}}{h} \,,\qquad 
	\widetilde{\varepsilon}_\text{ODE} &=  \, \big| \sum_{k=-m/2}^{m/2} \omega_k^{(1)} \big| \, {\varepsilon}_\text{ODE}\,.
	\end{aligned}
\end{equation}
The absolute value of the sum of the weights defining the Fornberg's formulas implemented to define the discrete differential operators are bounded by 1, hence $\widetilde{\varepsilon}_\text{ODE} \sim {\varepsilon}_\text{ODE}$. The computation of the parallel gradient estimator is carried out by multiple evaluation of the flow transported solution: $\phi^0(x+k \cdot h,y)$, $\phi^0(x,y+\ell \cdot h)$ for $-m/2\leq k,\ell\leq m/2$ for arbitrary values of $h$. This quantity as a function of the discretization parameter $h$ is plotted on Fig.~\ref{fig:GradPara:Error} for $4^\text{th}$, $6^\text{th}$ and $8^\text{th}$ order accurate finite difference discretizations ($m\in\{4,6,8\}$) at two different spatial locations. 
\begin{figure}[!ht]\centering
	\subfloat[$\x=(0.3,0.9)$]{\includegraphics[width=0.49\textwidth]{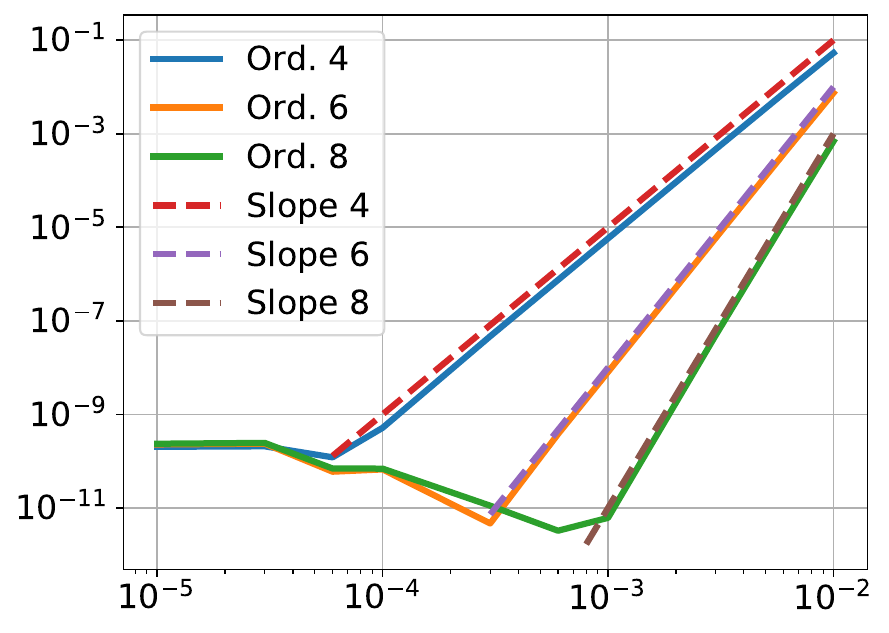}}\hspace*{0.02\textwidth}%
	\subfloat[$\x=(0.4,0.94)$]{\includegraphics[width=0.49\textwidth]{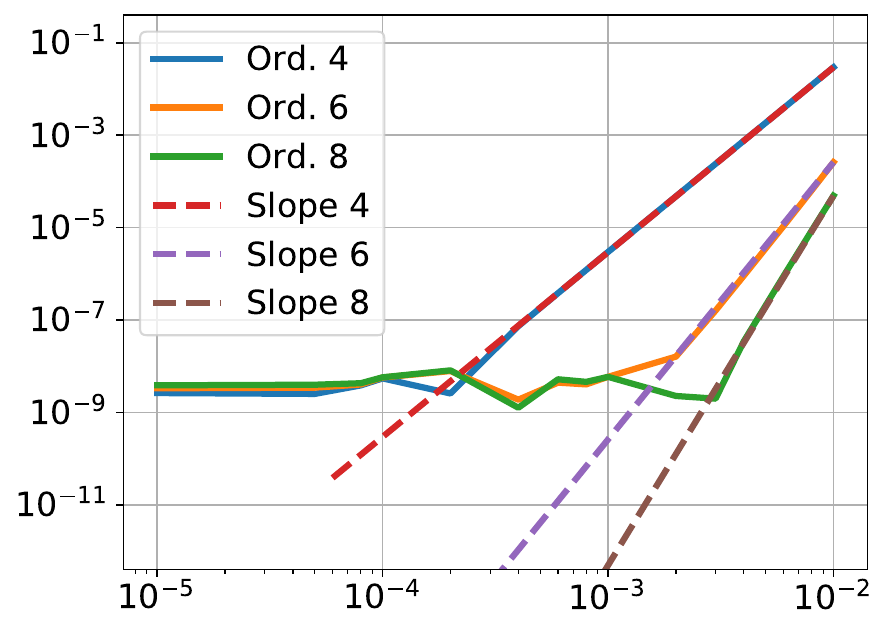}}
	\caption{Magnitude of the parallel gradient of $\phi^0$ carried out thanks to $4^\text{th}$, $6^\text{th}$, $8^\text{th}$ order accurate finite difference approximations as functions of the step size $h$. The computations are carried out with $(\beta,k)=(8,8)$.}\label{fig:GradPara:Error}
	\vspace*{-0.2cm}
\end{figure}
The error component originating from the ODE solver is negligible for large values of the step size $h$, as observed on the plots of Fig.~\ref{fig:GradPara:Error}. The error decreases with a slope $m$ related to the approximation of the finite difference approximation until the magnitude of the component due $\widetilde{\varepsilon}_\text{ODE}$ is reached. From this value, denoted $h^\dagger$, the decrease of the step size increases the error. An estimation of the flow transported accuracy may be deduced from this threshold. From Eq.~\eqref{eq:estimate:gradPara}, the following estimate is recovered
\begin{equation}
	\widetilde{\varepsilon}_\text{ODE} = h^\dagger \cdot \left|\big(b\cdot\nabla^{{h^\dagger},m}\phi^0\big)(x,y)\right|
\end{equation}
The values $h^\dagger$ obtained for the different approximations are reported in Tab.~\ref{tab:accuray:phi0} together with the estimated precision $\widetilde{\varepsilon}_\text{ODE}$ which shall be compared to ${\varepsilon}_\text{ODE}=|u^0(\x)-\phi^0(\x)|$. For the two values of $\x$ the magnitude of ${\varepsilon}_\text{ODE}$ is  accurately captured thanks to the estimator issued from the parallel gradient reconstruction.
\begin{table}	\centering
\begin{tabular}{|c|c|c|}\hline
	&\multicolumn{2}{|c|}{$\x=(0.3,0.9)$}\\ \hline
	 Order & $h^\dagger$ & $\widetilde{\varepsilon}_\text{ODE} $ \\ \hline \hline
	$4^\text{th}$ & $6 \cdot 10^{-5}$ &$4.4 \cdot 10^{-15}$ \\ \hline
	$6^\text{th}$ & $3 \cdot 10^{-4}$ &$1.8 \cdot 10^{-15}$ \\ \hline
	$8^\text{th}$ & $8 \cdot 10^{-4}$ &$9.4 \cdot 10^{-16}$ \\ \hline\hline
	& \multicolumn{2}{|c|}{$|u^0 - \phi^0| \leq 2.8 \cdot 10^{-15} $}\\ \hline
	\end{tabular} %
	\begin{tabular}{|c|c|c|}\hline
		\multicolumn{2}{|c|}{$\x=(0.4,0.94)$}\\ \hline
		$h^\dagger$ & $\widetilde{\varepsilon}_\text{ODE} $ \\ \hline \hline
		 $2 \cdot 10^{-4}$ &$5.2 \cdot 10^{-13}$ \\ \hline
		$8 \cdot 10^{-4}$ &$1.6 \cdot 10^{-12}$ \\ \hline
		 $3 \cdot 10^{-3}$ &$6.0 \cdot 10^{-12}$ \\ \hline\hline
		\multicolumn{2}{|c|}{$|u^0 - \phi^0| \leq 1.2 \cdot 10^{-12} $}\\ \hline
		\end{tabular}
	\caption{Precision of the flow transported solution $\phi^0$ estimated thanks to the error curve of $b\cdot\nabla^{h,m} \phi^0$. The computations are carried out with $(\beta,k)=(8,8)$.}\label{tab:accuray:phi0}
	\vspace*{-0.5cm}
\end{table}

A similar estimator may be proposed for the flow transported Laplacian. High order approximations of the second order derivatives are defined thanks to finite difference formulas \cite{fornberg_generation_1988} using multiple evaluations of the flow transported solution $\phi^0$ and the weights $(\omega_k^{(2)})_k$. Similarly to Eq.~\eqref{eq:Fornberg:formulas:1}, this yields
\begin{equation}\label{eq:Fornberg:formulas}
	\big(\partial_{xx}^{h,m} \phi^0\big)(x,y) = \frac{1}{h^2}\sum_{k={-m/2}}^{k=m/2} \omega_k^{(2)} \, \phi^0(x+k\cdot h,y) \,,
\end{equation}
the approximated Laplacian being defined as
\begin{equation}
	\big(\Delta^{h,m}\phi^0\big)(\x) =  \big(\partial_{xx}^{h,m} \phi^0\big)(\x) + \big(\partial_{yy}^{h,m} \phi^0\big)(\x)\,.
\end{equation}
The precision of the finite difference Laplacian operator may be estimated thanks to
\begin{equation}\label{eq:Difff:Lap:Estimate}
	\varepsilon_{\Delta^{h,m}}(\x)=|\big(\Delta^{h,m}\phi^0\big)(\x) - \Delta u^0(\x) | =   \mathcal{O}\left( h^m \right) + \frac{\widetilde{\varepsilon}_\text{ODE}}{h^2} \,.
\end{equation}
The Fornber's formula implemented in Eq.~\eqref{eq:Fornberg:formulas} meet the following property
\begin{equation}
	\big|\sum_{k=-m/2}^{m/2} \omega_k^{(2)} \big| < \frac{3}{2}\,,
\end{equation}
hence $\widetilde{\varepsilon}_\text{ODE} \sim {\varepsilon}_\text{ODE}$ holds for the second order approximations too.
The precision of the finite difference Laplacian $\Delta^{h,m}\phi^0$ as a function of the step size $h$ is represented on Fig.~\ref{fig:DiffLap:Error} for different orders of approximation $m$.
\begin{figure}[!ht]\centering
	\subfloat[$(x,y)=(0.3,0.9)$]{\includegraphics[width=0.48\textwidth]{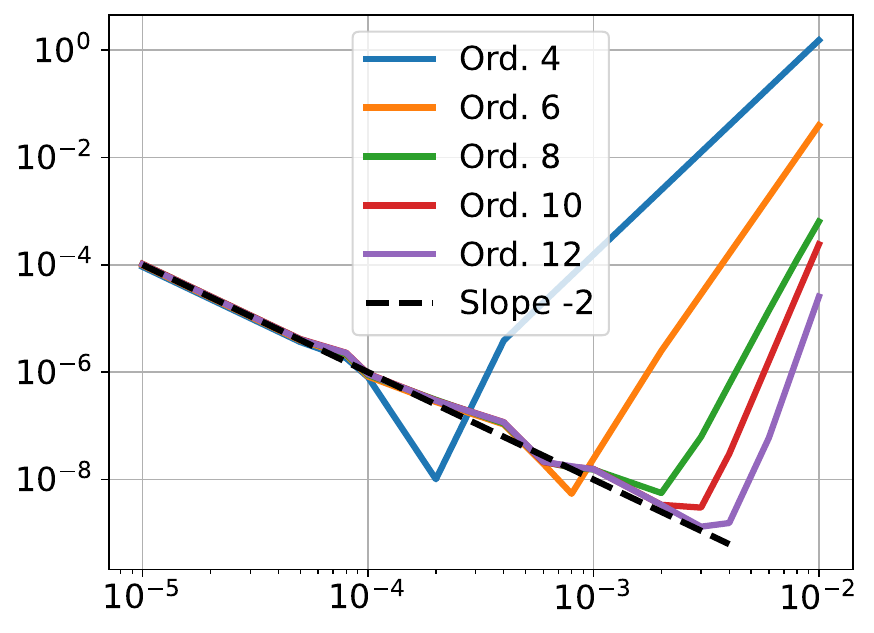}}\hspace*{0.04\textwidth}%
	\subfloat[$(x,y)=(0.4,0.94)$]{\includegraphics[width=0.48\textwidth]{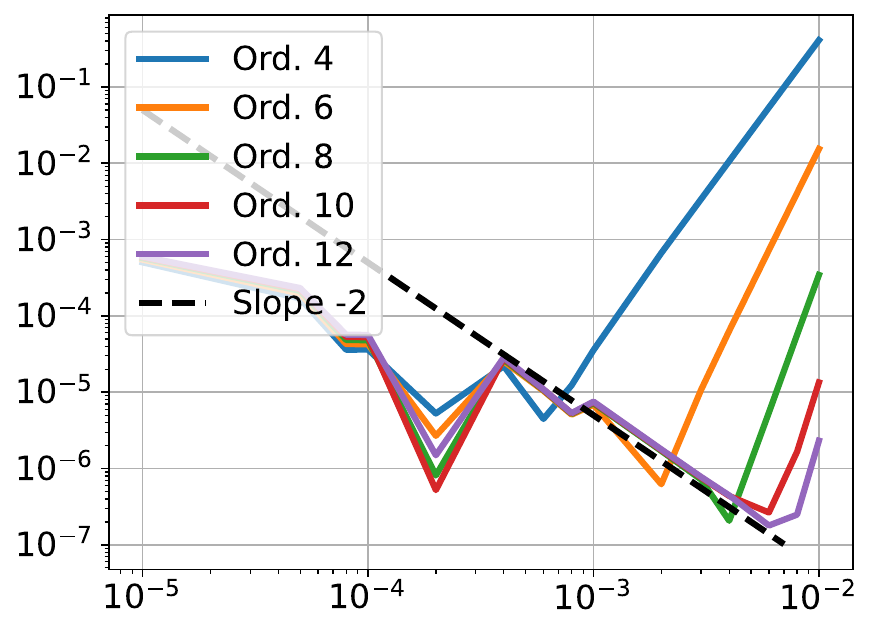}}
	\caption{ Precision of $\Delta^{h,m}\phi^0$ the finite differenced Laplacian of $\phi^0$ as a function of the mesh step. The computations are carried out with $(\beta,k)=(8,8)$.}\label{fig:DiffLap:Error}
	\vspace*{-0.5cm}
\end{figure}
The evolution of the precision is very similar to that of $b\cdot \nabla^{h,m} \phi^0$: for the largest values of the discretization parameter $h$, the error is dominated by the finite difference truncation error $\mathcal{O}(h^m)$. A decrease of the step size drives a decrease of the error with a slope $m$. For $h$-values smaller than the threshold $h^\dagger$, the error of the transported solution $\phi^0$ dominates the component due to the finite difference scheme. For these values of $h<h^\dagger$, a decrease of the step size entails an error increase with a slope 2, as stated by the estimate of Eq.~\eqref{eq:Difff:Lap:Estimate}. The value of the optimal step size and precision obtained from any of the finite difference scheme are gathered in Tab.~\ref{tab:accuray:DFLap:phi0}. An accuracy roughly comparable to that of the flow transported approximation may be obtained thanks to finite difference approximations. 
\begin{table}[!ht]	\centering
\begin{tabular}{|r|c|c|}\hline
	&\multicolumn{2}{|c|}{$(x,y)=(0.3,0.9)$}\\ \hline
	Order & $h^\dagger$ & $ \varepsilon_{\Delta^{h^\dagger,m}}$ \\[0.1em] \hline \hline
	$4^\text{th}$ & $2 \cdot 10^{-4}$ &$1.0 \cdot 10^{-8}$ \\ \hline
	$6^\text{th}$ & $8 \cdot 10^{-4}$ &$5.4 \cdot 10^{-9}$ \\ \hline
	$8^\text{th}$ & $2 \cdot 10^{-3}$ &$5.6 \cdot 10^{-9}$ \\ \hline
	$10^\text{th}$ & $3 \cdot 10^{-3}$ &$3.0 \cdot 10^{-9}$ \\ \hline
	$12^\text{th}$ & $3 \cdot 10^{-3}$ &$1.3 \cdot 10^{-9}$ \\ \hline \hline	
	\multicolumn{3}{|c|}{$|\Delta u^0 - \Delta \phi^0| < 2.2 \cdot 10^{-10} $}\\
	\multicolumn{3}{|c|}{$|\Delta u^0 - \Delta \phi^0|/|\Delta u^0| < 8.3 \cdot 10^{-14} $}\\ \hline
	\end{tabular} %
	\begin{tabular}{|c|c|}\hline
		\multicolumn{2}{|c|}{$(x,y)=(0.4,0.94)$}\\ \hline
		 $h^\dagger$ & $\varepsilon_{\Delta^{h^\dagger,m}}$ \\[0.1em] \hline \hline
		$6 \cdot 10^{-4}$ &$4.5 \cdot 10^{-6}$ \\ \hline
		$2 \cdot 10^{-3}$ &$6.2 \cdot 10^{-7}$ \\ \hline
		$4 \cdot 10^{-3}$ &$2.1 \cdot 10^{-7}$ \\ \hline
		$6 \cdot 10^{-3}$ &$2.7 \cdot 10^{-7}$ \\ \hline
		$6 \cdot 10^{-3}$ &$1.8\cdot 10^{-7}$ \\ \hline\hline
		\multicolumn{2}{|c|}{$|\Delta u^0 - \Delta \phi^0| < 1.4 \cdot 10^{-5} $}\\
	\multicolumn{2}{|c|}{$|\Delta u^0 - \Delta \phi^0|/|\Delta u^0| < 4.7 \cdot 10^{-8} $}\\ \hline
		\end{tabular}
	\caption{Best precision of $\Delta^{h,m}\phi^0$ the finite difference Laplacian of the flow transported solution, the optimal step size is denoted $h^\dagger$. The computations are carried out with $(\beta,k)=(8,8)$.}\label{tab:accuray:DFLap:phi0}
	\vspace*{-0.5cm}
\end{table}

\section{Conclusions}
This article is devoted to the construction of reference solutions used to perform the verification of numerical methods designed to discretize anisotropic elliptic problems arising is plasma physics. The purpose here is to provide a proof of principe of a new method generalizing the so-called solution manufacturing process yielding analytic solutions. So far this derivation is limited to a class of problems for which all the computations can be carried out analytically.

\smallskip\noindent In the specific context of anisotropic problems, the difficulty lies in the definition of a function constant along the field lines and to its Laplacian. Thanks to flow-transported-solution method proposed within the present work, the analytic computations of both quantities are substituted by a numerical procedure relying on the introduction of a local chart which permits the definition of a function constant along the  field lines through a flow integration. The Laplacian of this quantity is then computed by integration of the flow derivatives. The ODE defining these flows are numerically integrated to provide a reference solution obtained for magnetic fields representative of a broader class of applications compared to the solution manufacturing method. 

\smallskip\noindent The accuracy of the aligned gradient free solution is experienced to be close to the computer arithmetic precision and reliable estimates of this accuracy are proposed. The precision of the Laplacian is not straightforwardly related to that of the ODE integrator. The design of a precision estimator is more intricate for this quantity. For regular enough solutions, an estimate constructed by means of finite difference approximations is proposed. It defines either an alternative computation of the Laplacian or a bound estimate of the precision of the flow transported approximation.

\smallskip\noindent The existence of the flows used to approximate the solution and its Laplacian is carefully analyzed, and hypotheses are established within a fairly general framework to ensure the effectiveness of the flow-transported-solution method. In this work, the analysis is conducted for vector fields that are either perpendicular or parallel to the domain boundaries, a constraint that will be lifted in future studies.
 
\smallskip\noindent These initial achievements lay the groundwork for further numerical investigations. The accuracy of the flow-transported approximation, particularly for the Laplacian, could be enhanced by increasing computational precision (e.g., using quadruple precision instead of double precision). This defines a promising direction for future research, especially for approximating solutions with low regularity. For this class of functions, high-order finite difference discretizations cannot provide an accurate approximation of the Laplacian, making the flow-transported-solution method a necessary alternative.

\appendix
\section{Analysis of the flow-transport-solution method}\label{appendix:fred}
\subsection{Objectives and assumptions on the vector field} The purpose of this appendix is to provide a rigorous framework for defining the flow transported solutions and specify a sufficient set of hypotheses. 
The procedure consists in transporting data located on the reference line $\{y=\yp\}$ by flows defined by the $b$-field. Hence, we need to ascertain that the assumptions on the vector field ensure that any field lines reconnect to the reference line to define the foot, denoted $x_0$, of any field line. This issue is investigated within \ref{Sec:Appendix:Flows} together with the definition of the flows relevant for the flow-transported-solution method. The chart is defined in \ref{Sec:Appendix:Chart} and the \One{pullback mapping $F:$} $(x_0,t)\mapsto (x,y)$, where $x_0$ is the foot of the field line issued from $(x,y)$ and $t$ its arc-length, is proved to be a (local) change of variable. \Two{Sections \ref{Sec:Appendix:Lap} and \ref{Sec:Appendix:Parallel:Lap} are devoted to deriving the second-order directional differential operator with spatially varying coefficients, of the form $\nabla_\parallel \cdot (k\nabla_\parallel)$ where $k=k(\mathbf{x})$, hereafter referred to as the heterogeneous parallel Laplacian, as well as} the Laplacian of functions satisfying $\Delta_\parallel u = 0$. These analyses are concluded by the application to the homogeneous elliptic model problem, \Two{referring to a diffusion coefficient $k$ assumed constant and independent of $x$}, provided in \ref{Sec:Appendix:Application}.

A particular attention is paid to the vanishing of the $B$-field entailing the singularity of the normalized field $\vec b$. To conduct this analysis, a vector field incorporating an "X" point, in which the B-field vanishes, is considered as an illustration. This vector field, represented on Fig.~\ref{fig:B:Field} originates from the magnetic fields with CUSPS, particularly studied in \cite{jiang_magnetic_2020,deluzet_numerical_2023,garrigues_acceleration_2024}.%
\begin{figure}[htbp]
    \centering
    \begin{minipage}{0.55\textwidth} 
        \centering
		\vspace*{-0.1cm}
        \includegraphics[width=\linewidth]{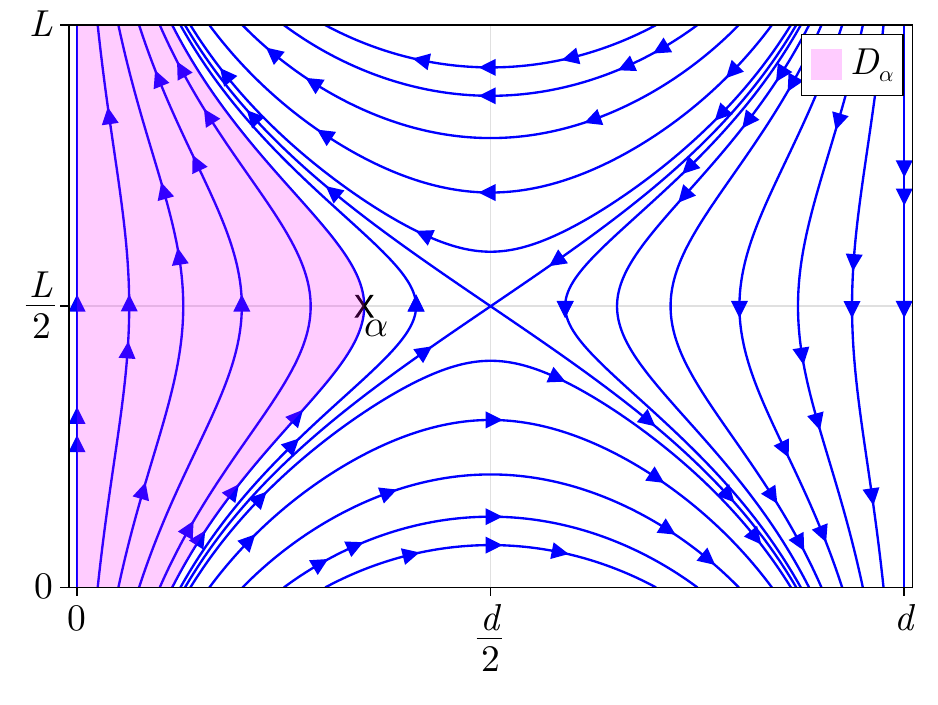}
		\vspace*{-.7cm}
    \end{minipage}%
    \hfill
    \begin{minipage}{0.4\textwidth} 
        \captionof{figure}{Representation of the vector field lines related to the vector field $\vec B=(B_x,B_y)$ in the Cartesian domain. The $B$-field vanishes in the point $S:=(d/2,L/2)$.}
        \label{fig:B:Field}
    \end{minipage}
\end{figure}
The aim now, is to prove, that there exists a domain excluding the X-point, where the flow transported solutions are well-defined. This appendix also provides the expression of the non-homogeneous parallel Laplacian (\Two{with diffusion coefficients depending on $x$}), a relevant quantity for applications.

\begin{remark}
\noindent\Two{This remark provides a comprehensive overview of the modeling choices and the practical implementation of the flow-transported method when extended to complex field topologies.}

\vspace{0.5em}
\noindent\Two{{\bfseries About the choice of the magnetic field at the boundary.} 
To streamline the exposition and the derivation of estimates, we assume magnetic fields that are perpendicular to the horizontal boundaries and parallel to the vertical ones (Hypothesis~\eqref{ppty:B}). Nevertheless, the analytical framework developed in this appendix is robust enough to be extended to fields with more general geometric configurations.}

\vspace{0.5em}
\noindent\Two{{\bfseries About the choice to introduce an $X$-singularity.} 
A magnetic field featuring an $X$-point, where the field vanishes, is considered to demonstrate the method's stability in critical regimes. The primary challenge lies in bounding the time required to reach the reference line where the boundary value is provided; while this duration increases near the singularity, the estimates derived in Lemma~\ref{Claim:2} remain strictly controlled by means of the parameter $\alpha$.}

\vspace{0.5em}
\noindent\Two{{\bfseries About the boundary condition for $u$.} 
For the sake of numerical clarity, we have focused on solutions satisfying a Neumann condition on the horizontal boundary (as in \eqref{eq:def:phi0}). However, the mathematical derivations in Lemmas~\ref{claim:lk} and \ref{claim:der} are conducted in full generality, ensuring that the results remain valid for broader sets of boundary specifications.}

\vspace{0.5em}
\noindent\Two{{\bfseries About the implementation of generalized geometries.} 
In the current framework, boundary data is propagated from segments locally perpendicular to the field vector $\mathbf{b}$. For $X$-point geometries (see Fig.~\ref{fig:B:Field}), this is achieved by prescribing inflow data on the horizontal segment at $y=L/2$ (or the vertical segment at $x=d/2$) and propagating it forward and backward along the flow characteristics. Such a procedure confirms the framework's reliability for segmented boundaries. Future work will focus on relaxing the orthogonality requirement to accommodate more arbitrary geometries, including the treatment of magnetic islands.}
\end{remark}

The domain is $\Omega=(0,\frac{d}{2})\times (0,L)$ and $\yp = L/2$ so that the reference line is defined by $(0,\frac{d}{2})\times \{\yp\}$.
Let $\vec B\in C^2(\rd,\rd)$ be a vector field such that there exists $\eps,d,L>0$ satisfying 
$$\left\{\vec B(x,y)= \vec 0 \hbox{ and }(x,y)\in (-\eps, d+\eps)\times \rr\right\} \Longleftrightarrow (x,y)=S:=\left(\frac{d}{2},\frac{L}{2} \right).$$

\begin{hypothesis}\label{Hyp:A1}
	Defining $B_x,B_y\in C^2(\rd,\rr)$ such that $\vec B:=(B_x,B_y)$, the following properties will be assumed
\begin{align}
&B_x>0\hbox{ in }(0,d)\times (-\infty,L/2)\hbox{ and } B_x<0\hbox{ in }(0,d)\times (L/2,+\infty).\label{H1}\\
&B_x<0\hbox{ in }(-\eps,0)\times (-\infty,L/2)\hbox{ and } B_x>0\hbox{ in }(-\eps,0)\times (L/2,+\infty).\label{H1bis}\\
&B_y>0\hbox{ in }(-\eps,d/2)\times \rr\hbox{ and }B_y<0\hbox{ in }(d/2,d+\eps)\times \rr.\label{H2}\\
&B_x(0,y)=B_x(d,y)=0\hbox{ for all }y\in \rr\label{H3}\\
&B_y(x,L)>0\hbox{ and }B_y(x,0)>0\hbox{ for }0\leq x<d/2.\label{H4}
\end{align}
\end{hypothesis}
As a consequence of these hypotheses, the following identities hold true
\begin{equation}\label{ppty:B}
\left\{\begin{array}{cc}
B(x, L/2)=\Vert B\Vert (0,1)&\hbox{ for }x\in (0,d/2),\\
B(x, L/2)=-\Vert B\Vert (0,1)&\hbox{ for }x\in (d/2,d),\\
B(d/2,y)=\Vert B\Vert (1,0)&\hbox{ for }y\in (0,L/2),\\
B(d/2,y)=-\Vert B\Vert (1,0)&\hbox{ for }y\in (L/2,L).
\end{array}\right.
\end{equation}
Finally, the normalized field is defined as 
$$\vec{b}(x,y)=(b_x,b_y):=\frac{\vec B(x,y)}{\Vert B(x,y)\Vert}\hbox{ for all }(x,y)\in (-\eps, d+\eps)\times \rr\setminus \{S\}.$$

For any $\alpha\in [0,d/2)$, we define
$$\eps_\alpha:=\min_{(x,y)\in [-\eps/2,\alpha]\times [0,L]}b_y(x,y).$$
It follows from Hyp.~\ref{Hyp:A1} that $\eps_\alpha>0$ for all $\alpha\in [0,d/2)$.

\subsection{Definition of the flows}\label{Sec:Appendix:Flows}
\subsubsection{Flow of the $\vec B$ field: $\Phi_t$}
\One{The different flows introduced in the precedent sections are now defined within a rigorous mathematical framework.}
\begin{definition}
	Given any $p\in\left((-\eps, d+\eps)\times \rr\right)\setminus\{S\}$, the flow $t\mapsto \Phi_t(p)$ associated to the field $\vec{b}$ is defined as
\begin{equation}\label{def:flow}
\left\{\begin{array}{l}
\displaystyle \frac{d}{dt}\Phi_t(p)=\vec{b}\big(\Phi_t(p)\big)\,,\\[0.3em]
\phi_0(p)=p\,.
\end{array}\right.
\end{equation}
\end{definition}
Owing to Hyp.~\ref{Hyp:A1} and since $\Vert\vec{b}\Vert=1$ where $B$ does not vanish, $\Phi_t(p)$ is defined for all $t$. The existence and uniqueness follows from the Cauchy Theorem since $\vec{b}\in C^2(\left((-\eps, d+\eps)\times \rr\right)\setminus\{S\})$.

The following assertions are consequence of ODE theory:
\begin{lemma}\label{Claim:def:T} For all $x_0\in (-\eps/2,d/2)$, let $t\mapsto \Phi_t(x_0, L/2)$ be the flow starting at $(x_0,L/2)$. Let $(x(t),y(t)):=\Phi_t(x_0,L/2)$ so that $x,y\in C^1(\rr,\rr)$. Then there exist $T_0(x_0), T_L(x_0)\in \rr$ such that
\begin{equation}\label{def:T}
\left\{\begin{array}{c}
T_0(x_0)<0,\, y(T_0(x_0))=0\hbox{ and }T_L(x_0)>0,\, y(T_L(x_0))=L\\
y'>0\hbox{ on }[T_0(x_0),T_L(x_0)]
\end{array}\right.
\end{equation}
\end{lemma}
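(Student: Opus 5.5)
Fix $x_0\in(-\eps/2,d/2)$ and write $(x(t),y(t))=\Phi_t(x_0,L/2)$. The plan is to confine the trajectory to a vertical strip on which $b_y$ is bounded below by a positive constant. Then $y$ increases at least linearly in forward time and decreases at least linearly in backward time, so it must reach $L$ and $0$.

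\emph{Step 1: invariance of the vertical lines.} By \eqref{H3}, $B_x(0,\cdot)=B_x(d,\cdot)=0$, so the line $\{x=0\}$ is invariant under the flow. By uniqueness in the Cauchy theorem, no trajectory starting off this line can reach it. The same argument applies to $\{x=d/2\}$ away from $S$. Indeed, \eqref{ppty:B} gives $B(d/2,y)=\pm\Vert B\Vert(1,0)$, which at first sight is not tangent to the line. I would read it together with \eqref{H2}, which forces $B_y(d/2,y)=0$. Combined with the sign information on $B_x$ from \eqref{H1}, this prevents trajectories from the left half from crossing $x=d/2$ while $y\neq L/2$. Near $S$ I expect to need a finer argument.

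\emph{Step 2: a uniform bound on $x$.} The cleaner route is to bound $x(t)$ directly using $\eps_\alpha$. For $x_0\ge0$, I claim $x(t)\in[0,\alpha]$ for some $\alpha<d/2$ while $y(t)\in[0,L]$. For $y<L/2$ one has $B_x>0$ by \eqref{H1}, so $x$ increases along the forward flow in the lower half and along the backward flow in the upper half. Hence the range of $x$ is reached at the endpoints, i.e.\ at $y=0$ or $y=L$. To bound the endpoint values away from $d/2$, I would set $\alpha$ as the largest abscissa reached on the compact arc up to exit. That arc avoids $S$ because only the separatrix $x=d/2$ enters $S$, and the separatrix is excluded by Step 1. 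For $x_0\in(-\eps/2,0)$, the signs in \eqref{H1bis} push $x$ towards $-\eps$. Here I would use the invariant line $x=0$ and a similar comparison, shrinking to the strip $[-\eps/2,\alpha]$ that appears in the definition of $\eps_\alpha$. This step is the main obstacle: keeping $x$ inside $[-\eps/2,\alpha]$ requires checking the behaviour near $x=-\eps/2$, and I would try to bound $|b_x|$ there using continuity.

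\emph{Step 3: reaching the boundaries.} As long as $\Phi_t$ stays in $[-\eps/2,\alpha]\times[0,L]$, we have $y'(t)=b_y\ge\eps_\alpha>0$. Therefore $y(t)\ge L/2+\eps_\alpha t$ for $t\ge0$, and $y$ reaches $L$ at some time $T_L\le L/(2\eps_\alpha)$. Symmetrically, for $t\le0$, $y$ reaches $0$ at some time $T_0\ge-L/(2\eps_\alpha)$. Define $T_L$ and $T_0$ as the first such hitting times; they satisfy $T_0<0<T_L$. On $[T_0,T_L]$ the trajectory stays in the strip, so $y'\ge\eps_\alpha>0$ there, which gives \eqref{def:T}. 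Global existence of the flow on $[T_0,T_L]$ follows from $\Vert\vec b\Vert=1$ together with the fact that the trajectory avoids $S$.
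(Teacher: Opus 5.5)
There is a genuine gap in Step 2, which you yourself flag as the crux. Your overall plan is the paper's: confine the orbit to a strip $[-\eps/2,\alpha]\times[0,L]$ with $\alpha<d/2$, then use $y'\geq\eps_\alpha$. Step 3 is fine once Step 2 holds. The problem is that you state the monotonicity facts correctly and then draw the opposite conclusion from them.

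As long as the orbit stays in $\{x<d/2\}$, \eqref{H2} makes $y$ increasing. So the orbit through $(x_0,L/2)$ lies in $\{y>L/2\}$ for $t>0$ and in $\{y<L/2\}$ for $t<0$. For $x_0>0$, \eqref{H1} then says that $x$ decreases as $t$ increases from $0$. It also decreases as $t$ decreases from $0$. Hence the largest abscissa on the arc is $x_0$ itself, attained at $y=L/2$; the endpoints $y=0$ and $y=L$ carry the minimum, not the maximum.

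Likewise, for $x_0<0$, \eqref{H1bis} pushes $x$ towards $0$ in both time directions, not towards $-\eps$. So $x_0\leq x(t)\leq 0$, and there is nothing to control near $x=-\eps/2$. Because you miss this, you define $\alpha$ as the largest abscissa on the compact arc up to exit. That is circular: the arc only exists once $T_0$ and $T_L$ are known, and producing them is the content of the lemma.

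The missing step is the paper's argument: take $\alpha=x_0$, or $\alpha=0$ when $x_0\leq 0$. Suppose $y(t)<L$ for all $t\geq 0$. A continuity argument then keeps the orbit in $[0,x_0]\times[L/2,L)$ for all $t\geq0$, using three facts:
\begin{itemize}
\item \eqref{H2}: $y$ increases while $x<d/2$;
\item \eqref{H1}: $x$ decreases while $y>L/2$;
\item \eqref{H3}: $\{x=0\}$ is invariant.
\end{itemize}
Hence $y'\geq \eps_{x_0}$, so $y\to+\infty$, a contradiction. The backward direction and the case $x_0\leq 0$ are identical.

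The bound $x(t)\leq x_0<d/2$ also keeps the orbit away from $S$ automatically, so your discussion of separatrices is unnecessary. That is fortunate, because Step 1 is wrong about $x=d/2$. By \eqref{ppty:B} the field there is $\pm(1,0)$: horizontal, hence transverse to that line. So $\{x=d/2\}$ is not invariant; orbits through $\{d/2\}\times(0,L/2)$ cross it from left to right. It is therefore neither a barrier for the left half nor the separatrix of $S$.
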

\begin{proof}[Proof of Lemma~\ref{Claim:def:T}] As a preliminary remark, note that if $x$ vanishes somewhere, it follows from Hyp.~\eqref{H3} and uniqueness in ODE theory that $x(t)=0$ for all $t\in \rr$. Assume that $x_0>0$, then $x(t)> 0$ for all $t\in \rr$. Owing to properties \eqref{ppty:B} and the ODE \eqref{def:flow}, there exists $\lambda>0$ such that $y'(t)>0$ for ${t \in}(-\lambda,\lambda)$. Assume that $y$ never reaches $L$ on $[0,+\infty)$. Then $y(t)<L$ on $[0,+\infty)$. With Hyp.~\ref{Hyp:A1}, we then get that $\Phi_t(x_0,L/2)=(x(t),y(t))\in [0,x_0]\times [L/2,L)$ for all $t\in [0,+\infty)$, so that $y'(t)\geq \eps_{x_0}$ for all $t\in [0,+\infty)$, and then $\lim_{t\to +\infty}y(t)=+\infty$, which is a contradiction. This proves the existence of a point $t_L>0$ such that $y(t_L)=L$, $T_L(x_0)>0$ being defined as the smallest such point when $x_0>0$. The case $x_0\leq 0$ and the proof of the existence of $T_0(x_0)$ are similar. \end{proof}

\begin{lemma}\label{Claim:2} Let \Two{$T_L: (-\eps/2,d/2)\to \rr$} be the functions defined by Lemma~\ref{Claim:def:T}. Then $T_L\in C^1((-\eps/2, d/2),\rr)$ and for all $\alpha\in [0, d/2)$, the following bounds hold true
$$\frac{L}{2}\leq T_L(x_0)\leq \frac{L}{2\eps_\alpha}\hbox{ for all }x_0\in (-\eps/2,\alpha).$$
\end{lemma}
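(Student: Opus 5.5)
The plan has three parts: the two bounds, then regularity. Throughout, fix $x_0\in(-\eps/2,\alpha)$ and write $(x(t),y(t))=\Phi_t(x_0,L/2)$. By Lemma~\ref{Claim:def:T} we have $y(T_L(x_0))=L$ and $y'>0$ on $[0,T_L(x_0)]$, where $T_L(x_0)$ is the first time $y$ reaches $L$.

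\textbf{Lower bound.} Since $\Vert\vec b\Vert=1$, we have $|y'(t)|\leq 1$. Integrating over $[0,T_L(x_0)]$ gives
$$L/2=y(T_L(x_0))-y(0)\leq T_L(x_0).$$

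\textbf{Upper bound.} I first confine the trajectory on $[0,T_L(x_0)]$ to the strip $[-\eps/2,\alpha]\times[L/2,L]$, split by the sign of $x_0$.
\begin{itemize}
\item If $x_0=0$, then $x\equiv 0$ by \eqref{H3} and uniqueness.
\item If $x_0\in(0,\alpha)$, uniqueness keeps $x>0$. While $y\in[L/2,L]$ and $x\in(0,d)$, \eqref{H1} gives $b_x\leq 0$, so $x$ is nonincreasing and $x(t)\in(0,x_0]$.
\item If $x_0\in(-\eps/2,0)$, \eqref{H1bis} gives $B_x>0$ on $(-\eps,0)\times(L/2,\infty)$, so $x$ is nondecreasing and stays in $[x_0,0)$.
\end{itemize}
One small point needs care: at $t=0$ we have $y=L/2$, where $B_x$ vanishes by \eqref{ppty:B}. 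For $t>0$, however, $y>L/2$ by monotonicity of $y$, so the sign conditions apply on $(0,T_L]$. A continuity/first-exit-time argument then shows the trajectory cannot leave the strip before $T_L$. Hence $y'(t)=b_y(x(t),y(t))\geq\eps_\alpha$ on $[0,T_L(x_0)]$, and integrating gives $L/2\geq \eps_\alpha T_L(x_0)$, which is the upper bound.

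\textbf{Regularity.} I would apply the implicit function theorem to
$$G(x_0,t):=\pi_y\bigl(\Phi_t(x_0,L/2)\bigr)-L$$
at $(x_0,T_L(x_0))$. The map $G$ is $C^1$, since $\vec b\in C^2$ away from $S$ and flows depend $C^1$ on initial data. The trajectories avoid $S$ because $x<d/2$ along them, so we stay away from the only zero of $\vec B$. The nondegeneracy condition is
$$\partial_tG=b_y\bigl(\Phi_{T_L}(x_0,L/2)\bigr)\geq\eps_\alpha>0.$$
The implicit function theorem then yields a $C^1$ local solution $\tilde T(x_0')$ near $x_0$.

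The main obstacle is identifying $\tilde T$ with $T_L$, which is defined as a \emph{first} hitting time. I would argue by uniform transversality. For $x_0'$ near $x_0$, the same strip argument gives $y'\geq\eps_{\alpha'}>0$ on the trajectory up to the hitting time, for some $\alpha'<d/2$. So $y$ is strictly increasing, there is a unique time at which $y=L$, and this time must coincide with $\tilde T(x_0')$. Therefore $T_L=\tilde T$ locally, and $T_L\in C^1((-\eps/2,d/2),\rr)$.
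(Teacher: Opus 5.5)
Your bounds argue as the paper does. The paper confines the trajectory to $[0,x_0]\times[L/2,L]$ and applies the mean value theorem with $y'\in[\eps_\alpha,1]$; integrating $y'$, as you do, is the same argument. Your remark that $B_x$ vanishes at $t=0$ spells out a point the paper leaves to ``one checks''.

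The regularity step takes a different route. The paper first proves continuity of $T_L$ by compactness:
\begin{itemize}
\item the bounds make $(T_L(x_i))_i$ bounded, so a subsequence converges to some $T$;
\item in the limit, $y\le L$ on $[0,T)$ and $y(T)=L$;
\item transversality at $T$ then forces $T=T_L(x_0)$.
\end{itemize}
Only afterwards does it apply the implicit function theorem. Continuity is what tacitly places $T_L(x_0')$ inside the uniqueness window of the implicit-function branch. You skip continuity and identify the branch $\tilde T$ with $T_L$ directly through uniqueness of the hitting time; continuity then comes for free from $C^1$. This is shorter and makes explicit an identification the paper leaves implicit. The paper's route, in contrast, needs only the first-hitting-time property and transversality at the hitting point.

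Your identification step needs one more sentence than you wrote. A lower bound on $y'$ ``up to the hitting time'' does not rule out that $\tilde T(x_0')$ is a later return to the level $y=L$ after $T_L(x_0')$. Also, $\eps_{\alpha'}$ says nothing once $y>L$. What you need is that $y$ is strictly increasing on all of $[0,\infty)$; together with $\tilde T(x_0')>0$, which holds because $\tilde T(x_0')$ is close to $T_L(x_0)\ge L/2$, this gives $\tilde T(x_0')=T_L(x_0')$. The monotonicity does hold:
\begin{itemize}
\item for $t\ge 0$ one has $y>L/2$, so the trajectory stays in $\{0<x\le x_0'\}$ by \eqref{H1}, respectively in $\{x_0'\le x<0\}$ by \eqref{H1bis};
\item \eqref{H2} gives $b_y>0$ on the whole strip $(-\eps,d/2)\times\rr$, not only for $y\in[0,L]$.
\end{itemize}
With that added, your proof is complete.
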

\begin{proof}[Proof of Lemma~\ref{Claim:2}] For $\alpha\in [0,d/2)$ and $0\leq x_0\leq \alpha$, using the notations of proof of Lemma~\ref{Claim:def:T}, one checks, $(x(t),y(t))\in [0,x_0]\times [L/2,L]$ for all $t\in [0, T_L(x_0)]$. In particular, $y'(t)=b_y(x(t),y(t))\in [\eps_\alpha, 1]$ for all $t\in [0, T_L(x_0)]$. Rolle's theorem yields $\tau\in (0,T_L(x_0))$ such that $L=y(T_L(x_0))=y(0)+y'(\tau)T_L(x_0)=L/2+y'(\tau)T_L(x_0)$. Since $\eps_\alpha\leq y'(\tau)\leq 1$, we get the required inequality on $T_L(x_0)$. The proof is similar for $-\eps/2\leq x_0\leq 0$. \par

\smallskip\noindent We now prove that $x_0\mapsto T_L(x_0)$ is continuous on $[-\eps/2,\alpha]$. Let $x_0\in [0,\alpha]$ and $(x_i)\in [0,\alpha]$ be a sequence such that $\lim_{i\to +\infty}x_i=x_0$. It follows from the preceding inequality that $(T_L(x_i))_i$ is bounded in $\rr$, so, up to extraction, it converges to some $T\in\rr$. For convenience, we let $\Pi_2$ be the projection on the second variable in $\rr^2$. We have that $\Pi_2(\Phi_t(x_i, L/2))<L$ for $0\leq t<T_L(x_i)$ and $\Pi_2(\Phi_{T_L(x_i)}(x_i, L/2))=L$. Passing to the limit $i\to +\infty$ yields $\Pi_2(\Phi_t(x_0, L/2))\leq L$ for $0\leq t<T$ and $\Pi_2(\Phi_{T}(x_0, L/2))=L$. Since $\frac{d}{dt}(\Pi_2(\Phi_t(x_0, L/2)))_{T}\geq \eps_\alpha>0$, we then get that $T=T_L(x_0)$. This limit is independent of the extraction, so $\lim_{i\to +\infty}T_L(x_i)=T_L(x_0)$, and $T_L$ is continuous. The proof is the same for $x_0\leq 0$. 

\smallskip\noindent We finally prove that $x_0\mapsto T_L(x_0)$ is $C^1$ on $(-\eps/2,\alpha)$. For all $x\in (-\eps/2,\alpha)$ and $t\in\rr$ let $G(t,x):=\Pi_2(\Phi_{t}(x,L/2))$, we then get that $G(T_L(x),x)=L$ for all $x\in (-\eps,\alpha)$. Since $\partial_t G(t,x_0)_{|t=T_L(x_0)}\geq \eps_\alpha>0$, it follows from the implicit function theorem that $T_L$ is $C^1$.
\end{proof}

\subsubsection{A first derivative of the flow $\Phi_t$: $\partial_{x_0}\Phi_t$}
\medskip\noindent We now differentiate the flow. Setting $p=(x_0,L/2)$ in Eq.~\eqref{def:flow} and differentiating $\Phi_t$ with respect to $x_0$, we get that for any \Two{$x_0\in (-\eps/2,\alpha)$, for all $\alpha \in [0,d/2)$} and $t\in\rr$,
\begin{equation}\label{def:flow:2}
\left\{\begin{array}{l}
\displaystyle \frac{d}{dt}\left(\partial_{x_0}\Phi_t(x_0,L/2)\right)=D\vec{b}\big(\Phi_t(x_0,L/2)\big)\big[\partial_{x_0}\Phi_t(x_0,L/2)\big]\,,\\[0.5em]
\partial_{x_0}\Phi_t(x_0,L/2)_{|t=0}=(1,0)\,.
\end{array}\right.
\end{equation}
Since $D\vec b$ is $C^1$, this generates a flow $S_t^{(1)}$ depending on the parameter $x_0$.
\begin{definition}\label{Def:A3}  
	For all  $x_0 \in (-\eps/2,\alpha)$, the flow 
%
 $S^{(1)}_t(x_0):=\partial_{x_0}\Phi_t(x_0,L/2)$ satisfies
\begin{equation}\left\{\begin{array}{l}
\displaystyle \frac{d}{dt}S^{(1)}_t(x_0)=D\vec{b}\big(\Phi_t(x_0,L/2)\big)\big[S^{(1)}_t(x_0)\Big]\,,\\[0.5em]
S^{(1)}_0(x_0)=(1,0)\,.
\end{array}\right.
\end{equation}
\end{definition}

\subsubsection{Second order derivative flow $\partial^2_{x_0x_0}\Phi_t$}
Differentiating Eq.~\eqref{def:flow:2} with respect to $x_0$ yields
\begin{equation}\label{def:flow:3}
	\left\{\begin{aligned}
	&\begin{multlined}[0.87\linewidth]
		 \frac{d}{dt}\left(\partial^2_{x_0x_0}\Phi_t(x_0,L/2)\right)=D\vec{b}\big(\Phi_t(x_0,L/2)\big)\big[\partial^2_{x_0x_0}\Phi_t(x_0,L/2)\big] \\
		+ D^2\vec{b}\big(\Phi_t(x_0,L/2)\big)\big[\partial_{x_0}\Phi_t(x_0,L/2),\partial_{x_0}\Phi_t(x_0,L/2)\big]\,,
	\end{multlined}\\
	&\partial^2_{x_0x_0}\Phi_t(x_0,L/2)=(0,0)\,.
	\end{aligned}\right.
	\end{equation}
Since $D^2\vec b\in C^0$, this generates the following flow.
\begin{definition}\label{Def:A4} For all $x_0 \in (-\eps/2,\alpha)$, the flow
%
 $S^{(2)}_t(x_0):=\partial^2_{x_0x_0}\Phi_t(x_0,L/2)$ satisfies
\begin{equation}\label{def:flow:3:S}
\left\{\begin{array}{l}
\displaystyle \frac{d}{dt}S^{(2)}_t(x_0)=D\vec{b}\big(\Phi_t(x_0,L/2)\big)\big[S^{(2)}_t(x_0)\big] \\
\hspace*{10em}+ D^2\vec{b}\big(\Phi_t(x_0,L/2)\big)\big[S^{(1)}_t(x_0),S^{(1)}_t(x_0)\big]\,,
\\[0.5em]
S^{(2)}_0(x_0)=(0,0)\,.
\end{array}\right.
\end{equation}
\end{definition}

\subsection{Definition of the chart and the metric}\label{Sec:Appendix:Chart}
For all $x_0\in (-\eps/2,d/2)$ and $t\in\rr$, we define
\begin{equation}\label{eq:Def:Det}
\mathcal{D}(x_0,t):=\det\left(\partial_{x_0}\Phi_t(x_0, L/2),\partial_t\Phi_t(x_0, L/2)\right).
\end{equation}%
\begin{lemma}\label{5} We have that
$$\mathcal{D}(x_0,t)=\exp \left(\int_{0}^t\hbox{Tr}\left(D\vec{b}(\Phi_s(x_0,L/2))\right)\, ds\right)\hbox{ for all }(x_0,t)\in (-\eps/2,d/2)\times \rr.$$
\end{lemma}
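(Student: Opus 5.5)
This is Liouville's formula (Abel–Jacobi) applied to the linearized flow. The plan is to show that $\mathcal{D}(x_0,\cdot)$ satisfies a scalar linear ODE with coefficient $\hbox{Tr}\,D\vec b$ along the field line, and that $\mathcal{D}(x_0,0)=1$.

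\textbf{Step 1 (the two columns).} Fix $x_0\in(-\eps/2,d/2)$ and write $\Phi_t:=\Phi_t(x_0,L/2)$ and $A(t):=D\vec b(\Phi_t)$. This is well defined for all $t$ because the trajectory never meets $S$, by uniqueness, since $S$ is a zero of $\vec B$. The columns of the matrix in \eqref{eq:Def:Det} are:
\begin{itemize}
\item the first column, $S^{(1)}_t(x_0)=\partial_{x_0}\Phi_t$, which solves $\frac{d}{dt}S^{(1)}_t=A(t)S^{(1)}_t$ by Definition~\ref{Def:A3};
\item the second column, $\partial_t\Phi_t=\vec b(\Phi_t)$, which also solves $\frac{d}{dt}\vec b(\Phi_t)=D\vec b(\Phi_t)[\vec b(\Phi_t)]=A(t)\vec b(\Phi_t)$ by the chain rule.
\end{itemize}
Hence $M(t):=\big(\partial_{x_0}\Phi_t,\partial_t\Phi_t\big)$ satisfies the matrix ODE $M'(t)=A(t)M(t)$.

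\textbf{Step 2 (derivative of the determinant).} Differentiate $\mathcal{D}=\det M$ by multilinearity in the columns:
$$\mathcal{D}'=\det(AM_1,M_2)+\det(M_1,AM_2).$$
In dimension two, for any $2\times2$ matrix $A$ and vectors $u,v$, one has $\det(Au,v)+\det(u,Av)=\hbox{Tr}(A)\det(u,v)$; this is checked by direct expansion of the coefficients. Therefore $\mathcal{D}'(t)=\hbox{Tr}(A(t))\,\mathcal{D}(t)$. Integrating this scalar linear ODE gives
$$\mathcal{D}(x_0,t)=\mathcal{D}(x_0,0)\exp\Big(\int_0^t\hbox{Tr}\,D\vec b(\Phi_s)\,ds\Big).$$

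\textbf{Step 3 (initial value).} At $t=0$ the first column is $S^{(1)}_0=(1,0)$. The second column is $\vec b(x_0,L/2)=(0,1)$: for $x_0\in(0,d/2)$ this follows from \eqref{ppty:B}, and for $x_0\in(-\eps/2,0]$ one needs $b_x(x_0,L/2)=0$. For $x_0=0$ this comes from \eqref{H3}. For $x_0\in(-\eps/2,0)$, \eqref{H1bis} gives $B_x<0$ below $L/2$ and $B_x>0$ above, so continuity forces $B_x(x_0,L/2)=0$; moreover $B_y>0$ there by \eqref{H2}. Hence $\mathcal{D}(x_0,0)=\det\big((1,0),(0,1)\big)=1$, which yields the formula.

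The only delicate points are justifying this initial value on the whole interval $(-\eps/2,d/2)$ and the $C^1$ regularity in $t$ of $\vec b(\Phi_t)$, which holds since $\vec b\in C^2$ away from $S$. The algebra in Step 2 is routine.
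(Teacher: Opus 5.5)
Your proof is correct and follows the paper's own argument: differentiate $\mathcal{D}$ in $t$ column by column, apply $\det(AX,Y)+\det(X,AY)=\hbox{Tr}(A)\det(X,Y)$, integrate the scalar linear ODE, and use $\mathcal{D}(x_0,0)=1$. You justify that initial value more carefully than the paper, including the case $x_0\in(-\eps/2,0]$ via \eqref{H3}, \eqref{H1bis} and \eqref{H2}.

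One side remark is wrong, though it does not affect the lemma. Global existence of $t\mapsto\Phi_t(x_0,L/2)$ does not follow from uniqueness alone, because arc-length trajectories of the normalized field $\vec b$ can reach a zero of $\vec B$ in finite time along a separatrix. The correct reason is that the sign conditions \eqref{H1}--\eqref{H3} confine the forward and backward trajectories to closed regions such as $[0,x_0]\times[L/2,+\infty)$ that avoid $S$. The paper takes this fact as given.
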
%
\begin{proof}[Proof of Lemma~\ref{5}] We differentiate Eq.~\eqref{eq:Def:Det} along the $t-$variable to obtain
\begin{eqnarray*}
\partial_t \mathcal{D}(x_0,t)
&=& \det\left(D\vec{b}(\Phi_t(x_0, L/2))[\partial_{x_0}\Phi_t(x_0, L/2)],\partial_t\Phi_t(x_0, L/2)\right)\\
&&+\det\left(\partial_{x_0}\Phi_t(x_0, L/2),D\vec{b}(\Phi_t(x_0, L/2))[\partial_{t}\Phi_t(x_0, L/2)]\right)\,.
\end{eqnarray*}
Since $\det(AX,Y)+\det(X,AY)=\hbox{Tr}(A) \det(X,Y)$ for all $X,Y\in\rr^2$ and $A\in M_2(\rr)$, we get that
\begin{eqnarray*}
\partial_t \mathcal{D}(x_0,t)&=&  \det\left(\partial_{t,x_0}\Phi_t(x_0, L/2),\partial_t\Phi_t(x_0, L/2)\right)\\
	&& +\det\left(\partial_{x_0}\Phi_t(x_0, L/2),\partial_{t}\vec{b}(\Phi_t(x_0, L/2))\right)\\
&=& \hbox{Tr}(D\vec{b}(\Phi_t(x_0,L/2))) \det\left( \partial_{x_0}\Phi_t(x_0, L/2),\partial_t\Phi_t(x_0, L/2)\right)\\
&=&\hbox{Tr}(D\vec{b}(\Phi_t(x_0,L/2))) \mathcal{D}(x_0,t).
\end{eqnarray*}
The proof is concluded by integrating with respect to $t$ and noting that $\mathcal{D}(x_0,0)=1$.
\end{proof}
\begin{definition}\label{Def:yalpha}
For $\alpha\in (0,d/2)$, we define 
\begin{equation}\label{Eq:Def:yalpha}
	(x_\alpha(t),y_\alpha(t)):=\Phi_t(\alpha, L/2) \mbox{ for all } t\in\rr\,,
\end{equation}
and denote $\One{y_\alpha^{-1}} \, : \, y\mapsto t$, the reciprocal application to $y_\alpha$.
\end{definition}
\begin{lemma}\label{lemma:X} There exists an open interval $I$ such that $y_\alpha: \rr\to I\supset [0,L]$ is a $C^1-$diffeomorphism and 
$$y_\alpha(\rr)=I\hbox{ and }0<y_\alpha'(t)\leq 1\hbox{ for all }t\in \rr.$$
\end{lemma}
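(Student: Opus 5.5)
The plan is to show that the trajectory $t\mapsto (x_\alpha(t),y_\alpha(t))$ stays for all $t\in\rr$ in the vertical strip $(0,\alpha]\times\rr$. This strip lies inside $(-\eps,d/2)\times\rr$, where Hyp.~\eqref{H2} gives $B_y>0$. Once the strip confinement is known, $y_\alpha'(t)=b_y(x_\alpha(t),y_\alpha(t))>0$ for every $t$. The upper bound $y_\alpha'\le 1$ is immediate from $\Vert\vec b\Vert=1$. Note also that the trajectory never reaches $S$, since $S$ has abscissa $d/2>\alpha$. Hence the flow is globally defined, as noted after \eqref{def:flow}, and $y_\alpha\in C^1(\rr)$.

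\textbf{Confinement.} I will argue as in the proof of Lemma~\ref{Claim:def:T}.

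\emph{Lower bound.} By Hyp.~\eqref{H3}, the line $\{x=0\}$ is invariant under the flow. Since $x_\alpha(0)=\alpha>0$, uniqueness in the Cauchy theorem gives $x_\alpha(t)>0$ for all $t$.

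\emph{Upper bound.} Let $J$ be the maximal interval containing $0$ on which $x_\alpha\le\alpha$ and $x_\alpha>0$. On $J$, $y_\alpha$ is increasing because $b_y>0$ there. Consequently:
\begin{itemize}
\item for $t\ge 0$ in $J$ we have $y_\alpha(t)\ge L/2$, so Hyp.~\eqref{H1} gives $B_x\le 0$ and hence $x_\alpha'\le 0$;
\item for $t\le 0$ in $J$ we have $y_\alpha(t)\le L/2$, so $B_x\ge 0$ and, running backwards in time, $x_\alpha$ again does not increase.
\end{itemize}
Thus $x_\alpha(t)\le x_\alpha(0)=\alpha$ on $J$. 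To conclude that $J=\rr$, I will use a continuity argument. If $J$ had a finite endpoint $t^*$, then $x_\alpha(t^*)\le\alpha<d/2$, so $b_y(\Phi_{t^*})>0$ and the monotonicity above persists on a neighbourhood of $t^*$. This would extend $J$, a contradiction.

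\textbf{Diffeomorphism.} Since $y_\alpha'>0$ everywhere, $y_\alpha$ is strictly increasing and continuous. Its image is therefore an open interval $I=(\inf y_\alpha,\sup y_\alpha)$; it is open because a strictly monotone function on $\rr$ attains neither its infimum nor its supremum. The inverse function theorem, applicable since $y_\alpha'\neq0$, gives that $y_\alpha^{-1}$ is $C^1$, so $y_\alpha:\rr\to I$ is a $C^1$-diffeomorphism. Finally, Lemma~\ref{Claim:def:T} applied with $x_0=\alpha$ yields times $T_0(\alpha)<0<T_L(\alpha)$ at which $y_\alpha$ equals $0$ and $L$. Hence $[0,L]\subset y_\alpha([T_0,T_L])\subset I$.

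\textbf{Main obstacle.} The hard step is the confinement for all $t\in\rr$, not just on $[T_0,T_L]$. Outside $[0,L]$ there is no uniform lower bound of the type $\eps_\alpha$, so the sign information from Hyp.~\eqref{H1}--\eqref{H2} must be used pointwise together with the bootstrap argument above. I also need to check that the sign conventions on $B_x$ above and below $y=L/2$ really push $x_\alpha$ away from $d/2$ in both time directions.
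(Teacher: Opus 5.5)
Your argument is correct and is essentially the paper's proof. The paper's proof consists only of the remark that the lemma follows from \eqref{def:T}, \eqref{H1}--\eqref{H2} and the inverse function theorem. You supply the details that remark leaves implicit: confining $\Phi_t(\alpha,L/2)$ to $(0,\alpha]\times\rr$ so that $y_\alpha'=b_y\in(0,1]$, using strict monotonicity to get an open image $I$, and applying Lemma~\ref{Claim:def:T} at $x_0=\alpha$ to get $[0,L]\subset I$.
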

\begin{proof}[Proof of Lemma~\ref{lemma:X}] This is a consequence of the definition \eqref{def:T}, the properties \eqref{H1} and \eqref{H2} and the inverse function theorem. 
\end{proof}
\Two{We now defined $D_\alpha$ as illustrated on Fig.~\ref{fig:B:Field} and state some important properties related to this domain.}
\begin{lemma}\label{claim:Dalpha} Let $D_\alpha:=\{(x,y)\in (0,d/2)\times I/\, 0 <x<x_\alpha\circ \One{y_\alpha^{-1}}(y) \}$, then $D_\alpha$ is preserved by the flow $\Phi_t$:  $\Phi_t(D_\alpha)\subset D_\alpha$ for all $t\in\rr$.
\end{lemma}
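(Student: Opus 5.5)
Our plan is to show that the boundary of $D_\alpha$ is made of pieces that are themselves invariant under $\Phi_t$, and then conclude with uniqueness for the Cauchy problem \eqref{def:flow}.

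\textbf{Boundary pieces.} The region $D_\alpha$ is bounded laterally by two curves. The left curve is the segment $\{0\}\times I$. The right curve is the graph $\{(x_\alpha\circ y_\alpha^{-1}(y),y),\ y\in I\}$. By Definition~\ref{Def:yalpha} and Lemma~\ref{lemma:X}, this graph is exactly the orbit $\{\Phi_t(\alpha,L/2),\ t\in\rr\}$.

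\textbf{Invariance of the two curves.} The orbit of $(\alpha,L/2)$ is invariant by definition. The line $\{x=0\}$ is invariant by the preliminary remark in the proof of Lemma~\ref{Claim:def:T}. Indeed, \eqref{H3} gives $b_x(0,y)=0$, and $b_y(0,y)>0$ by \eqref{H2}. Hence $t\mapsto(0,y(t))$ with $y'=b_y(0,y)$ is a solution, and by uniqueness any trajectory touching $\{x=0\}$ stays on it.

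\textbf{Trapping argument.} Let $p=(x_p,y_p)\in D_\alpha$ and write $\Phi_t(p)=(x(t),y(t))$. Suppose the trajectory leaves $D_\alpha$ at some time $t_1>0$ (the case $t_1<0$ is symmetric). Take the first such time. By continuity, $\Phi_{t_1}(p)$ then lies on one of the two lateral curves, or $y(t)$ exits $I$.
\begin{itemize}
\item If $\Phi_{t_1}(p)$ lies on either lateral curve, that curve is an orbit. Uniqueness then forces the whole trajectory through $p$ to coincide with that orbit, so $p$ itself lies on the boundary, which is a contradiction.
\item Exiting through the top or bottom of $I$ is a matter of definition. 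I would take $I=y_\alpha(\rr)$, as in Lemma~\ref{lemma:X}, and argue that it cannot happen.
\end{itemize}

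\textbf{Main obstacle.} The key point is to show that for each $t$ the value $y(t)$ remains in $I$, i.e. that the trajectory of $p$ cannot escape vertically beyond the range swept by the bounding orbit. I would compare trajectories in the strip: $p$ lies between two orbits parametrized by $y$, namely $\{x=0\}$ and the graph $x_\alpha\circ y_\alpha^{-1}$. Then $p$'s trajectory, being squeezed horizontally between them, has $x(t)\in(0,\alpha']$ with $\alpha'=\sup x_\alpha<d/2$. This sup is finite and below $d/2$ because $x_\alpha$ stays in $[0,\alpha]$ near $y\in[0,L]$ by the arguments of Lemma~\ref{Claim:2}.

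Consequently $b_y\geq c>0$ along the trajectory, using \eqref{H2} and the lower bound $\eps_\alpha$ extended to the relevant range. So $y$ is strictly monotone and can be used as a parameter. We then write the trajectory as a graph $x=\xi(y)$ solving $\xi'=b_x/b_y$, and do the same for the two boundary orbits. The scalar ODE comparison principle on $I$ keeps $0<\xi(y)<x_\alpha\circ y_\alpha^{-1}(y)$ for all $y$ in the maximal interval. The trajectory is defined for all $t$ with $y'\ge c$ on compact pieces, so its $y$-range is as large as the domain allows. If needed, I would adjust $I$ to be the common $y$-range, which is fine since the comparison holds on all of $I$. This comparison step gives the invariance.
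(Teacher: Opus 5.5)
Your first bullet is exactly the paper's proof. Both lateral boundary curves of $D_\alpha$, the line $\{x=0\}$ and the orbit of $(\alpha,L/2)$, are unions of orbits, so by uniqueness a trajectory starting in $D_\alpha$ never reaches them; the paper then concludes ``by continuity''.

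The gap is in the step you yourself call the main obstacle, which you do not close. The comparison principle only gives $0<\xi(y)<x_\alpha\circ y_\alpha^{-1}(y)$ for $y$ in the intersection of the trajectory's $y$-range with $I$. It cannot show that this $y$-range stays inside $I$, and ``its $y$-range is as large as the domain allows'' is not an argument. You are also not free to ``adjust $I$'': $I=y_\alpha(\rr)$ is fixed by Lemma~\ref{lemma:X}, and changing it changes $D_\alpha$, hence the statement.

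This point matters. By \eqref{H2}--\eqref{H3}, $\vec b(0,y)=(0,1)$, so the orbit on $\{x=0\}$ is $t\mapsto(0,y_0+t)$. If $I$ were bounded above, continuous dependence would make points of $D_\alpha$ close to $\{x=0\}$ cross $\sup I$ in finite time, and the lemma would be false. Separately, $\eps_\alpha$ is a minimum over $[-\eps/2,\alpha]\times[0,L]$ and gives no uniform lower bound on $b_y$ over an unbounded $y$-range. Only $b_y>0$, which \eqref{H2} provides, is needed to use $y$ as a parameter.

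The missing ingredient is that $I=\rr$, which you can prove as follows.
\begin{itemize}
\item By Lemma~\ref{lemma:X}, $y_\alpha$ is increasing, so $y_\alpha(t)>L/2$ for $t>0$.
\item Then \eqref{H1} gives $x_\alpha'<0$ while $x_\alpha\in(0,d)$, and $x_\alpha$ cannot reach the invariant line $\{x=0\}$. So $x_\alpha(t)\in(0,\alpha]$ is nonincreasing for $t\geq 0$. This also gives your bound $\sup x_\alpha\leq\alpha$ globally, not just for $y\in[0,L]$ as your appeal to Lemma~\ref{Claim:2} suggests.
\item Suppose $\bar y:=\lim_{t\to+\infty}y_\alpha(t)$ were finite. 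Then $\Phi_t(\alpha,L/2)$ would converge to a point $q=(x^*,\bar y)\neq S$, where $\vec b$ is continuous with $\Vert\vec b(q)\Vert=1$.
\item But then $\Phi_{t+1}(\alpha,L/2)-\Phi_t(\alpha,L/2)=\int_t^{t+1}\vec b(\Phi_s(\alpha,L/2))\,ds\to\vec b(q)\neq 0$, contradicting convergence.
\item The limit $t\to-\infty$ is handled identically using the other half of \eqref{H1}.
\end{itemize}
Hence $I=\rr$, and $\overline{D_\alpha}\setminus D_\alpha$ consists only of the two lateral curves. The vertical-exit case is therefore empty, and your first bullet alone completes the proof. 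This is precisely what the paper's ``by continuity'' tacitly relies on.
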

\begin{proof}[Proof of Lemma~\ref{claim:Dalpha}] We fix $(x,y)\in D_\alpha$ and we define $\gamma(t):=\Phi_t(x,y)$ for all $t\in\rr$. Assume that there exists $t_0\in\rr$ such that $\gamma(t_0)_x=x_\alpha\circ \One{y_\alpha^{-1}}(\gamma(t_0)_y)$. Defining $\tau:=\One{y_\alpha^{-1}}(\gamma(t_0)_y)$, we get that $\gamma(t_0)=\Phi_{\tau}(\alpha, L/2)$. The uniqueness for ODEs yield $\gamma(t)=\Phi_{t+\tau-t_0}(\alpha, L/2)$ for all $t\in\rr$, so that for $t=0$, we get that $x=x_\alpha\circ \One{y_{\alpha}^{-1}}(y)$, contradicting $(x,y)\in D_\alpha$. Similarly, we get that $\gamma(t)_x\neq 0$ for all $t\in \rr$. By continuity, we then get that $\gamma(t)\in D_\alpha$ for all $t\in\rr$, this concludes the proof. 
\end{proof}
\begin{definition} \Two{Let $D_\alpha$ be given by Lemma~\ref{claim:Dalpha}, the pullback mapping} $F$ is defined as 
\begin{equation}
	\begin{array}{cccc}
F: & (0,\alpha)\times \rr &\to & D_\alpha\\
& (x_0,t) &\mapsto & \Phi_{t}(x_0, L/2)
\end{array}
\end{equation}
\end{definition}

\begin{lemma}\label{lemma:Pullback} $F$ is a $C^1-$diffeomorphism. Moreover, 
$$\det\left(\partial_{x_0} F(x_0, t),\partial_t F(x_0, t)\right)=\mathcal{D}(x_0, t)\hbox{ for all }(x_0,t)\in (0,\alpha)\times \rr.$$
\end{lemma}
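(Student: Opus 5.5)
The plan is to establish, in order: that $F$ is $C^1$ and maps into $D_\alpha$; the Jacobian formula, which gives a local diffeomorphism by the inverse function theorem; then injectivity; then surjectivity onto $D_\alpha$. Regularity comes from smooth dependence on initial data. On $D_\alpha$, which stays away from $S$ since $\alpha<d/2$, the field $\vec b$ is $C^2$, so $(x_0,t)\mapsto\Phi_t(x_0,L/2)$ is $C^2$, in particular $C^1$. For $x_0\in(0,\alpha)$ the point $(x_0,L/2)$ lies in $D_\alpha$, because $x_\alpha\circ y_\alpha^{-1}(L/2)=x_\alpha(0)=\alpha$. Lemma~\ref{claim:Dalpha} then gives $\Phi_t(x_0,L/2)\in D_\alpha$ for all $t$, so $F$ is well defined with values in $D_\alpha$.

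The Jacobian identity is immediate from the definitions. We have $\partial_{x_0}F=\partial_{x_0}\Phi_t(x_0,L/2)$ and $\partial_tF=\partial_t\Phi_t(x_0,L/2)$, so the determinant is exactly $\mathcal{D}(x_0,t)$ as defined in \eqref{eq:Def:Det}. Lemma~\ref{5} expresses it as an exponential, hence it is $>0$. By the inverse function theorem, $F$ is therefore a local $C^1$-diffeomorphism, and it remains to show that $F$ is bijective.

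For injectivity, suppose $\Phi_t(x_0,L/2)=\Phi_s(x_1,L/2)$. Uniqueness for the ODE gives $\Phi_{t-s}(x_0,L/2)=(x_1,L/2)$. Since $x_0>0$, the trajectory never meets $\{x=0\}$ (preliminary remark in the proof of Lemma~\ref{Claim:def:T}) and stays in $D_\alpha\subset(0,d/2)\times I$, where $b_y>0$ by \eqref{H2}. Hence $y$ is strictly increasing along the trajectory. The trajectory starts at height $L/2$, so returning to height $L/2$ forces $t-s=0$, and then $x_1=x_0$.

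Surjectivity is the main obstacle. Given $p=(x,y)\in D_\alpha$, I would follow the flow $\Phi_\tau(p)$. Along it $y$ is strictly increasing with $y'=b_y\ge \eps_\alpha'>0$, where $\eps_\alpha'$ is a positive lower bound for $b_y$ on the relevant compact region. One concern is that $I$ may exceed $[0,L]$; I would handle this either by using positivity of $b_y$ on $[0,\alpha]\times\bar I$ (from \eqref{H2}) or by a continuity argument. Since $\vec b$ is bounded and the trajectory remains in $D_\alpha$ by Lemma~\ref{claim:Dalpha}, it reaches height $L/2$ in finite time $\tau_0$, at some point $(x_0,L/2)$. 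This point lies in $D_\alpha$, so $0<x_0<x_\alpha(0)=\alpha$, and then $p=F(x_0,-\tau_0)$. The delicate part is the lower bound on $b_y$ along the backward or forward trajectory when $y$ lies outside $[0,L]$ but inside $I$. There I would instead argue that $y(\tau)$ is monotone and confined to $I=y_\alpha(\mathbb{R})$. If it never reached $L/2$, it would converge, so $\vec b$ would have a limit direction that is horizontal, contradicting $b_y>0$ on the closure away from $S$. Combining bijectivity with the local diffeomorphism property yields that $F$ is a global $C^1$-diffeomorphism.
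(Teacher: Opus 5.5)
Your proposal is correct and follows essentially the paper's route. You reach the reference line $\{y=L/2\}$ by following the field line through $p$ (the paper does this with the backward flow $\check\Phi_t$), read the Jacobian identity off the definition of $\mathcal D$ with positivity from Lemma~\ref{5}, and conclude with the inverse function theorem.

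You also supply two details the paper leaves implicit. First, you prove injectivity via strict monotonicity of $y$; the paper never argues it before stating ``This proves bijectivity''. Second, you justify the finite hitting time. Your worry about $I$ extending beyond $[0,L]$ is harmless: the only compact set you need is $[0,\alpha]\times[\min(y,L/2),\max(y,L/2)]$, and $b_y>0$ on it by \eqref{H2}.
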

\begin{proof}[Proof of Lemma~\ref{lemma:Pullback}] We first prove the bijectivity. We let $p=(x,y)\in D_\alpha$ be such that $y\geq L/2$. We consider the backward flow $\frac{d}{dt}\check{\Phi}_t(p)=-\vec{b}(\check{\Phi}_t(p))$ which turns to be defined for all $t\in\rr$. Arguing as in previous claims, there exists $t_0\geq 0$ and $x_0\in (0, d/2)$  such that $\check{\Phi}_{t_0}(p)=(x_0, L/2)$. Since $t\mapsto \check{\Phi}_{t_0-t}(p)$ and $t\mapsto \Phi_t(x_0, L/2)$ coincide at $t=0$ and satisfy the ODE \eqref{def:flow}, uniqueness yields  $\check{\Phi}_{t_0-t}(p)= \Phi_t(x_0, L/2)$ for all $t\in\rr$, so that $p= {\Phi}_{t_0}(x_0, L/2)=F(t_0, x_0)$. Note that $D_\alpha$ is also preserved by the flow $\check{\Phi}_t$, and then $(x_0,L/2)\in D_\alpha$, and then $0<x_0<\alpha$. The proof is similar for $y\leq L/2$. This proves bijectivity. The equality for the determinant is a reformulation of Lemma~\ref{5}. The diffeomorphism is a consequence of the inverse function theorem.
\end{proof}
We pull-back the Euclidean metric, denoted Eucl, via $F$ to get the new Riemannian metric $g:=F^\star\hbox{Eucl}$.
\begin{definition}
	The metric of the pullback F is the $2\times 2$ matrix denoted $g$, $g^{-1}$ being its inverse, defined by
	\begin{subequations}	\label{eqs:g:coeffg}	
	\begin{equation}
		g=\left(\begin{matrix}
			g_{tt} & g_{t x_0}\\ g_{tx_0} & g_{x_0x_0}
			\end{matrix}\right) \,, \qquad 
		g^{-1}=(g^{ij})=\frac{1}{\det g} \left(\begin{matrix}
	g_{x_0x_0} & -g_{t x_0}\\ -g_{tx_0} & g_{tt}
	\end{matrix}\right)\,,\label{exp:invg}
	\end{equation}
with, denoting $\langle\cdot,\cdot\rangle$ the usual scalar product in $\rr^2$
	\begin{equation}	\label{eq:g:coeff}	
	\left\{\,\,\begin{aligned}
		&g_{tt}=\langle \partial_t F,\partial_t F\rangle=1\,,\\
		&g_{tx_0}=g_{x_0t}=\langle \partial_t F,\partial_{x_0} F\rangle=\langle \vec{b}\circ \Phi_t(x_0, L/2), S_t^{(1)}(x_0)\rangle\,,\\
		&g_{x_0x_0}=\langle \partial_{x_0}F,\partial_{x_0} F\rangle=\Vert S_t^{(1)}(x_0)\Vert^2\,.
		 \end{aligned}\right.
		\end{equation}
	\end{subequations}
\end{definition}

\begin{lemma}\label{exp:detg} We have that $\det g=\mathcal{D}(x_0, t )^2$ for all $(x_0,t)\in (0,\alpha)\times\rr$.
\end{lemma}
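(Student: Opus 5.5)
The plan is to recognize $g$ as a Gram matrix and use the identity $\det(M^TM)=(\det M)^2$ for a square matrix $M$. I would write $M(x_0,t)$ for the $2\times 2$ matrix whose columns are $\partial_t F(x_0,t)$ and $\partial_{x_0}F(x_0,t)$. By the definitions \eqref{eq:g:coeff}, each entry of $g$ is a Euclidean scalar product of two of these columns. Hence $g=M^TM$ entrywise, with the ordering $(t,x_0)$ of the rows and columns matching \eqref{exp:invg}.

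Taking determinants then gives
\[
\det g=\det(M^T)\det(M)=\big(\det(\partial_t F,\partial_{x_0}F)\big)^2 .
\]
Swapping the two columns changes only the sign of the determinant. The square is therefore equal to $\det(\partial_{x_0}F,\partial_t F)^2$.

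Lemma~\ref{lemma:Pullback} identifies $\det(\partial_{x_0}F,\partial_t F)$ with $\mathcal{D}(x_0,t)$ on $(0,\alpha)\times\rr$. This yields $\det g=\mathcal{D}(x_0,t)^2$ directly.

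As an alternative check, one can expand $g_{tt}g_{x_0x_0}-g_{tx_0}^2=\|\vec b\|^2\|S^{(1)}_t\|^2-\langle \vec b,S^{(1)}_t\rangle^2$ and apply Lagrange's identity in $\rr^2$. This gives $\det(S^{(1)}_t,\vec b\circ\Phi_t)^2$, which is the same quantity.

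There is no real obstacle. The only points needing care are the following:
\begin{itemize}
\item $\partial_t F=\vec b\circ\Phi_t$ is a unit vector on $D_\alpha$, because $D_\alpha$ avoids $S$. This is what makes $g_{tt}=1$.
\item The identification $\partial_{x_0}F=S^{(1)}_t(x_0)$ holds, so that the Gram structure is the intended one.
\item The sign convention from the column ordering disappears under squaring.
\end{itemize}
By Lemma~\ref{5}, $\mathcal{D}>0$, so one also gets $\det g>0$ and $g^{-1}$ is well defined.
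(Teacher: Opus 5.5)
Your proposal is correct and follows essentially the same route as the paper: both write $g$ as the Gram matrix $M^TM$ of $\partial_tF$ and $\partial_{x_0}F$, use $\det(M^TM)=(\det M)^2$, and then identify $\det(\partial_{x_0}F,\partial_tF)$ with $\mathcal{D}(x_0,t)$ (the column order does not matter after squaring). Your Lagrange-identity check and your remark that $\mathcal{D}>0$ by Lemma~\ref{5} are valid additions, but they are not needed for the statement.
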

\begin{proof}[Proof of Lemma~\ref{exp:detg}] This equality relies on a more general identity. Let $(E, \langle\cdot,\cdot\rangle)$ be a Euclidean space of dimension $n$, $\beta$ be an orthonormal basis and $X_1,...,X_n$ be vectors of $E$. We set $A=\hbox{Mat}_\beta(X_1,...,X_n)$. Writing the coordinates of the vectors in $\beta$, we get that $(A^TA)_{ij}=\langle X_i,X_j\rangle$ for all $i,j\in \{1,...,n\}$. Taking the determinant yields $\det_\beta(X_1,...,X_n)^2=\det(\langle X_i,X_j\rangle)$. Taking $n=2$ together with both the vectors $\partial_t F$ and $\partial_{x_0}F$, we obtain the result.
\end{proof}
\subsection{Expression of $\Delta_{\parallel}$ in the chart}\label{Sec:Appendix:Lap}
Let $k\in C^1(\rr^2)$ be a function such that $k(x,y)\neq 0$ for all $(x,y)\in\rr^2$. We define the parallel Laplacian as 
$$\Delta_{\parallel} u=\hbox{div}\left(k \vec{b}\otimes \vec{b} \cdot\nabla u\right)=\sum_{i,j}\partial_i (k b^i b^j\partial_ju).$$
\begin{lemma}\label{11} Given $0<\alpha<d/2$ and $(x_0,t)\in (0,\alpha)\times \rr$, we have that
\begin{equation}\label{exp:delta:p}
(\Delta_{\parallel}u)\circ F(x_0,t)=k\circ F\left( \partial_{tt}(u\circ F)+H(\Phi_t(x_0, L/2))\partial_t (u\circ F)\right),
\end{equation}
where
$$H(x,y)=\hbox{Tr}\left(D\vec{b}(x,y)\right)+\frac{\langle\nabla k,\vec{b}\rangle(x,y)}{k(x,y)}.$$
\end{lemma}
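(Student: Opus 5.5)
The identity will be proved by working directly in Cartesian coordinates and recognising the two directional derivatives along the flow. No Christoffel symbols are needed. Write $\Delta_\parallel u=\mathrm{div}(V)$ with $V:=k\,(\vec b\cdot\nabla u)\,\vec b$, i.e.\ $V=w\,\vec b$ where $w:=k\,\langle \vec b,\nabla u\rangle$. The product rule for the divergence gives
\[
\Delta_\parallel u=\langle \nabla w,\vec b\rangle+w\,\mathrm{div}\,\vec b
=\langle \nabla w,\vec b\rangle+w\,\mathrm{Tr}(D\vec b).
\]

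The key observation is that, along $F$, derivatives in the direction $\vec b$ are exactly $\partial_t$ derivatives. Since $\partial_t F(x_0,t)=\vec b(\Phi_t(x_0,L/2))$ by the definition of the flow, the chain rule gives, for any $C^1$ function $h$,
\[
\partial_t(h\circ F)=\langle \nabla h,\vec b\rangle\circ F .
\]
This is valid on $(0,\alpha)\times\rr$, where $F$ is a $C^1$-diffeomorphism onto $D_\alpha$ (Lemma~\ref{lemma:Pullback}) and $\vec b$ is smooth away from $S$.

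Apply this observation twice.
\begin{enumerate}
\item With $h=u$, it gives $\partial_t(u\circ F)=\langle\nabla u,\vec b\rangle\circ F$, hence $w\circ F=(k\circ F)\,\partial_t(u\circ F)$.
\item With $h=w$, it gives
\[
\langle\nabla w,\vec b\rangle\circ F=\partial_t\big((k\circ F)\,\partial_t(u\circ F)\big)
=(k\circ F)\,\partial_{tt}(u\circ F)+\partial_t(k\circ F)\,\partial_t(u\circ F).
\]
\end{enumerate}
Using the observation once more, $\partial_t(k\circ F)=\langle\nabla k,\vec b\rangle\circ F$.

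Collecting terms,
\[
(\Delta_\parallel u)\circ F=(k\circ F)\,\partial_{tt}(u\circ F)+\Big(\langle\nabla k,\vec b\rangle\circ F+(k\circ F)\,\mathrm{Tr}(D\vec b)\circ F\Big)\,\partial_t(u\circ F).
\]
Factoring out $k\circ F$, which is allowed since $k\neq0$, gives exactly the coefficient $H(\Phi_t(x_0,L/2))$ of \eqref{exp:delta:p}.

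The only delicate point is regularity: $u$ must be $C^2$ and $k\in C^1$ for the second derivative and the chain rule to make sense. One must also stay in $D_\alpha$, which avoids the singular point $S$, so that $\vec b\in C^2$ there; Lemma~\ref{claim:Dalpha} and Lemma~\ref{lemma:Pullback} guarantee this. As a cross-check, one could instead use the Riemannian divergence formula $\mathrm{div}\,V=\frac{1}{\sqrt{\det g}}\partial_i(\sqrt{\det g}\,V^i)$ in the chart. There $V=w\,\partial_t$, and $\partial_t\log\mathcal D=\mathrm{Tr}(D\vec b)$ by Lemmas~\ref{5} and \ref{exp:detg}, which reproduces the same $\mathrm{Tr}(D\vec b)$ term.
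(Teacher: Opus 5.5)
Your proof is correct, but it takes a different route from the paper. You stay in Cartesian coordinates. The product rule $\mathrm{div}(w\vec b)=\langle\nabla w,\vec b\rangle+w\,\mathrm{Tr}(D\vec b)$, combined with the chain rule $\partial_t(h\circ F)=\langle\nabla h,\vec b\rangle\circ F$ applied to $u$, $w$ and $k$, gives the formula in a few lines. This is exactly the ``direct computation'' the paper mentions and then omits.

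The paper instead pulls the divergence back to the chart, writing $(\Delta_\parallel u)\circ F=\mathrm{div}_g(\tilde k\,\partial_t\tilde u\,\vec e_t)$ with $g=F^\star\mathrm{Eucl}$. It expands this via the Levi-Civita connection as $\partial_t(\tilde k\,\partial_t\tilde u)+\sum_i\Gamma^i_{it}\,\tilde k\,\partial_t\tilde u$. It then identifies $\sum_i\Gamma^i_{it}=\tfrac12\partial_t\log\det g=\partial_t\mathcal D/\mathcal D=\mathrm{Tr}(D\vec b)\circ F$ using Jacobi's formula and Lemmas~\ref{5} and~\ref{exp:detg}. Your Voss--Weyl cross-check is essentially that argument.

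Your version is more elementary and needs less. It only uses that $t\mapsto F(x_0,t)$ is an integral curve of $\vec b$ avoiding $S$. It does not need $F$ to be a diffeomorphism, so your appeal to Lemma~\ref{lemma:Pullback} is unnecessary, and it does not need Lemma~\ref{5}. What the paper's Riemannian framing buys is uniformity. The same $\mathrm{div}_g$/Christoffel machinery is reused verbatim in Lemma~\ref{claim:lk} for the full operator $\mathrm{div}(K\nabla\cdot)$. There the $x_0$-direction enters and the contraction $\sum_{p,q}g^{pq}\Gamma^k_{pq}$ is genuinely needed, so a purely directional argument along the flow like yours does not extend as directly.
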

\begin{proof}[Proof of Lemma~\ref{11}] \One{This result can be obtained through direct computation, which we omit for brevity. Instead, we present a proof based on Riemannian geometry, where $\hbox{div}_gX=\sum_{i,j}g^{ij}(\nabla X)_{ij}$ via the Levi-Civita connection. The key observation} is that $\vec{b}(F(x_0,t))=\partial_t F(x_0,t)=dF(x_0,t)(\vec{e}_t)$ where $\vec{e}_t$ is the second vector of the canonical basis of $\rr^2$. Setting $\tilde{k}:=k\circ F$ and $\tilde{u}=u\circ F$, we have that
\begin{eqnarray*}
(\Delta_{\parallel}u)\circ F&=& \left(\hbox{div}\left(k(x,y)\vec{b}\otimes \vec{b} \cdot\nabla u\right)\right)\circ F\\
&=& \left(\hbox{div}_{\hbox{Eucl}}\left(k(x,y)Du(x,y)[\vec{b}]\vec{b}\right)\right)\circ F\\
&=& \hbox{div}_{F^\star\hbox{Eucl}}\left((k\circ F) D(u\circ F)[\vec{e}_t]\vec{e}_t\right)=\\
&=&\hbox{div}_{g}\left(\tilde{k} \partial_t\tilde{u}\vec{e}_t\right)=\sum_{i\in \{x_0,t\}}\nabla_i \left(\tilde{k} \partial_t\tilde{u}\vec{e}_t\right)^i\\
&=&\sum_{i\in \{x_0,t\}}\Big\{\partial_i \left(\tilde{k} \partial_t\tilde{u}\vec{e}_t\right)^i+\sum_{j\in \{x_0,t\}}\Gamma_{ij}^j\left(\tilde{k} \partial_t\tilde{u}\vec{e}_t\right)^i\Big\}\\
&=& \partial_t \left(\tilde{k} \partial_t\tilde{u}\right)+\sum_{i\in \{x_0,t\}}\sum_{j\in \{x_0,t\}}\Gamma_{ij}^i\left(\tilde{k} \partial_t\tilde{u}\vec{e}_t\right)^j\\
&=&\partial_t \left(\tilde{k} \partial_t\tilde{u}\right)+\sum_{i\in \{x_0,t\}} \Gamma_{it}^i \tilde{k} \partial_t\tilde{u}\\
\end{eqnarray*}
where the $\Gamma_{ij}^k$'s are the Christoffel symbols, that is
\begin{equation}\label{def:christoffel}
\Gamma_{ij}^k:=\frac{1}{2}\sum_{m}g^{km}\left(\partial_i g_{mj}+\partial_j g_{im}-\partial_m g_{ij}\right).
\end{equation}
Using the symmetries of $g$, we have that
\begin{equation*}
\sum_i\Gamma_{it}^i:=\frac{1}{2}\sum_{i,m}g^{im} \partial_t g_{im}.
\end{equation*}
Independently, the differential of the determinant yields
\begin{eqnarray*}
\partial_t \det g&=& \det(g)(\hbox{Tr}(g^{-1}\partial_t g))=\det(g)(\sum_{i,j}g^{ij}\partial_t g_{ij})
\end{eqnarray*}
which, thanks to Lemma~\ref{5} yields
$$\sum_i\Gamma_{it}^i=\frac{1}{2}\frac{\partial_t \det g}{\det g}=\frac{\partial_t \mathcal{D}}{\mathcal{D}}=\hbox{Tr}(D\vec{b}(\Phi_t(x_0,L/2))).$$
Finally, noting that $\langle \nabla k, \vec{b}\rangle\circ F=\partial_t \tilde{k}$, the proof is concluded by plugging all these identities together.
\end{proof}

Integrating Eq.~\eqref{exp:delta:p}, we get that
\begin{lemma}\label{exp:u} Fix $\alpha\in (0,d/2)$ and let $u\in C^2(D_\alpha)$ be a function. Then $\Delta_{\parallel}u=0$ in $D_\alpha$ if and only if there exists $P,Q\in C^2((0,\alpha))$ such that
	\begin{equation}\label{eq:Lemma:A14}
		\left\{\,\begin{aligned}
			u\circ F(x_0,t)&=P(x_0)+Q(x_0)\int_0^t\Psi_{(x_0,L/2)}(s)\, ds \,, \quad \forall (x_0,t)\in (0,\alpha)\times\rr\,, \\
			\Psi_{(x_0,L/2)}(s) &= \exp \left(-\int_0^s H\big(\Phi_\tau(x_0, L/2)\big)\, d\tau\right) \,.
		\end{aligned}		\right.
	\end{equation}
\end{lemma}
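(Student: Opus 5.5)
The plan is to reduce the statement to a family of linear second-order ODEs in $t$, parametrized by $x_0$, using Lemma~\ref{11} and the fact that $F$ is a $C^1$-diffeomorphism (Lemma~\ref{lemma:Pullback}). Set $\tilde u:=u\circ F$ and $h(x_0,t):=H(\Phi_t(x_0,L/2))$. Since $k\circ F\neq 0$ and $F$ is bijective onto $D_\alpha$, Lemma~\ref{11} gives that $\Delta_\parallel u=0$ in $D_\alpha$ if and only if
\begin{equation*}
\partial_{tt}\tilde u+h\,\partial_t\tilde u=0\quad\text{on }(0,\alpha)\times\rr .
\end{equation*}
The main tool is the integrating factor $\Psi_{(x_0,L/2)}(t)=\exp\bigl(-\int_0^t h(x_0,\tau)\,d\tau\bigr)>0$, which satisfies $\partial_t\Psi=-h\Psi$. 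With it, the equation above is equivalent to $\partial_t\bigl(\Psi^{-1}\partial_t\tilde u\bigr)=0$.

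For the direct implication, fix $x_0$. The identity $\partial_t(\Psi^{-1}\partial_t\tilde u)=0$ gives $\partial_t\tilde u(x_0,t)=Q(x_0)\Psi_{(x_0,L/2)}(t)$ with $Q(x_0):=\partial_t\tilde u(x_0,0)$. A second integration in $t$ gives the formula \eqref{eq:Lemma:A14} with $P(x_0):=\tilde u(x_0,0)=u(x_0,L/2)$. For the converse, differentiate \eqref{eq:Lemma:A14} twice in $t$. This gives $\partial_t\tilde u=Q\Psi$ and $\partial_{tt}\tilde u=-hQ\Psi=-h\partial_t\tilde u$, and Lemma~\ref{11} then yields $\Delta_\parallel u=0$.

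The main obstacle is the regularity claim $P,Q\in C^2$ and the legitimacy of applying Lemma~\ref{11} at all. The difficulty is that $F$ is only stated to be $C^1$, whereas Lemma~\ref{11} requires second derivatives of $u\circ F$. To handle this, I would first upgrade the regularity of $F$. Since $\vec b\in C^2$ away from $S$, and $D_\alpha$ stays at positive distance from $S$ because $x<\alpha<d/2$, the $C^k$-dependence of ODE flows on initial data makes $(x_0,t)\mapsto\Phi_t(x_0,L/2)$ of class $C^2$, jointly in $(x_0,t)$. Hence $\tilde u\in C^2$, and $P(x_0)=u(x_0,L/2)$ is $C^2$ because $u\in C^2$.

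The function $Q(x_0)=\partial_t\tilde u(x_0,0)=\langle\nabla u,\vec b\rangle(x_0,L/2)$ is a priori only $C^1$. This is the delicate point. For the direct implication I would either weaken the requirement to $Q\in C^1$ or, if needed, argue via the $C^2$ regularity of $\tilde u$ in $x_0$ along other fixed $t$. The identity $\partial_t\tilde u(x_0,t)=Q(x_0)\Psi(x_0,t)$ with $\Psi>0$ of class $C^1$ in $x_0$ transfers regularity only up to that of $\Psi$. So I expect the honest statement to be $Q\in C^1$, with $C^2$ holding if $\vec b$ is $C^3$. I would note this caveat rather than force it.

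For the converse, the right-hand side of \eqref{eq:Lemma:A14} is $C^2$ in $(x_0,t)$ when $P,Q\in C^2$, because $\Psi$ is $C^1$ in $x_0$ and smooth enough in $t$. Composing with the inverse $F^{-1}$, which is $C^2$ by the inverse function theorem, then gives $u\in C^2(D_\alpha)$. This makes Lemma~\ref{11} applicable and closes the equivalence.
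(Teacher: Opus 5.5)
Your argument is the paper's own. Its proof is the single line ``integrating Eq.~\eqref{exp:delta:p}'', and your integrating factor $\partial_t(\Psi^{-1}\partial_t\tilde u)=\Psi^{-1}(\partial_{tt}\tilde u+h\,\partial_t\tilde u)$, followed by two integrations in $t$, is exactly that computation. Your upgrade of $F$ to a $C^2$ map is correct, and it is in fact needed for Lemma~\ref{11} to make sense.

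Your caveat about $Q$ is also well founded. Under the standing assumption $k\in C^1$ of this subsection, the forward implication with $Q\in C^2$ is false, and the paper's one-line proof does not address this.
\begin{itemize}
\item \textbf{Local model.} Take $\vec b\equiv(0,1)$, $F(x_0,t)=(x_0,L/2+t)$, and $k(x,y)=1/r(x+y-L/2)$ with $r>0$, $r\in C^1\setminus C^2$, $R'=r$. Set $u(x,y)=R(x+y-L/2)-R(x)$. Then $u\in C^2$ and $k\,\partial_y u\equiv 1$, so $\Delta_\parallel u=\partial_y(k\,\partial_y u)=0$. Yet any representation \eqref{eq:Lemma:A14} forces $Q(x_0)=\partial_t\tilde u(x_0,0)=r(x_0)\notin C^2$.
\item \textbf{Counterexample in the paper's setting.} Transport the same construction through $F$ for a smooth X-point field. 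Set $k\circ F=1/(1+\eta\,\zeta(x_0)\phi(x_0+t))$ with $\eta$ small, $\zeta\in C^\infty_c((0,\alpha))$, and $\phi\in C^1_c(\rr)$ not $C^2$ at a point where $\zeta\equiv1$. Take $\tilde u(x_0,t)=\int_0^t ds/(\mathcal{D}(x_0,s)\,k(F(x_0,s)))$. Then $u\in C^2(D_\alpha)$ and $\Delta_\parallel u=0$, but $Q=1+\eta\zeta\phi$ is not $C^2$.
\end{itemize}
So the correct forward conclusion is $P\in C^2$, $Q\in C^1$.

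Your bookkeeping, however, omits $k$. $H$ contains $\langle\nabla k,\vec b\rangle/k$, so for $k\in C^1$ the factor $\Psi$ is in general only $C^0$ in $x_0$, not $C^1$. Hence $\vec b\in C^3$ alone does not restore $Q\in C^2$. What does restore it is $H\in C^2$, for instance $\vec B\in C^3$ \emph{and} $k\in C^3$; these are exactly the hypotheses of Lemma~\ref{claim:der}, where this lemma is used. In that case $Q=(\tilde u(\cdot,t)-P)/\Lambda(\cdot,t)$ with $\Lambda(x_0,t)=\int_0^t\Psi_{(x_0,L/2)}(s)\,ds\neq0$ for any fixed $t\neq0$. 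This is a quotient of $C^2$ functions.

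Drop your last paragraph. You claim there that $P+Q\int_0^t\Psi$ is $C^2$ in $(x_0,t)$ whenever $P,Q\in C^2$. This is false: the $x_0$-regularity of $\int_0^t\Psi$ is capped by that of $\Psi$, as you noted yourself just before. It is also unnecessary. Since $u\in C^2(D_\alpha)$ is a hypothesis of the lemma, Lemma~\ref{11} applies directly in the converse, and your two $t$-differentiations already conclude.
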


\begin{lemma}\label{fact:pres:1} Fix $\alpha\in (0,d/2)$ and let $u\in C^2(D_\alpha)$ be such that $\Delta_{\parallel}u=0$ in $D_\alpha$ and let $P,Q$ defined by Lemma~\ref{exp:u}. Then 
$$P(x_0)=u(x_0, L/2)\hbox{ and }Q(x_0)=\langle\nabla u, \vec{b}\rangle(x_0, L/2).$$
\end{lemma}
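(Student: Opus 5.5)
The plan is to evaluate the representation of Lemma~\ref{exp:u} and its $t$-derivative at $t=0$, using that $F(x_0,0)=\Phi_0(x_0,L/2)=(x_0,L/2)$.

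\emph{Identification of $P$.} Setting $t=0$ in Eq.~\eqref{eq:Lemma:A14}, the integral $\int_0^0\Psi_{(x_0,L/2)}(s)\,ds$ vanishes. Hence $u(x_0,L/2)=u\circ F(x_0,0)=P(x_0)$ for every $x_0\in(0,\alpha)$.

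\emph{Identification of $Q$.} Differentiate Eq.~\eqref{eq:Lemma:A14} with respect to $t$. This is legitimate because $u\circ F$ is $C^1$, since $u\in C^2(D_\alpha)$ and $F$ is a $C^1$-diffeomorphism by Lemma~\ref{lemma:Pullback}. Moreover $s\mapsto\Psi_{(x_0,L/2)}(s)$ is continuous, since $H$ is continuous along the flow: $D\vec b$ and $\nabla k$ are continuous and $k\neq0$. The fundamental theorem of calculus then gives
\[
\partial_t(u\circ F)(x_0,t)=Q(x_0)\,\Psi_{(x_0,L/2)}(t).
\]
On the other hand, the chain rule together with $\partial_tF(x_0,t)=\vec b(\Phi_t(x_0,L/2))$, from the flow equation \eqref{def:flow}, gives
\[
\partial_t(u\circ F)(x_0,t)=\langle\nabla u,\vec b\rangle\big(\Phi_t(x_0,L/2)\big).
\]
At $t=0$ we have $\Psi_{(x_0,L/2)}(0)=\exp(0)=1$ and $\Phi_0(x_0,L/2)=(x_0,L/2)$. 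Comparing the two expressions there yields $Q(x_0)=\langle\nabla u,\vec b\rangle(x_0,L/2)$.

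The only points needing care are regularity ones. First, $(x_0,L/2)$ must lie in $D_\alpha$, which holds for $x_0\in(0,\alpha)$ by the definition of $D_\alpha$ and the fact that $x_\alpha(0)=\alpha$. Second, $\vec b$ must be smooth there, which holds because $S\notin D_\alpha$. I do not expect a genuine obstacle: the statement is a direct consequence of the explicit formula in Lemma~\ref{exp:u} evaluated at the reference line.
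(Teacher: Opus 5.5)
Your proof is correct and is essentially the argument the paper intends. The paper states this lemma without proof, as an immediate consequence of Lemma~\ref{exp:u}: evaluate the representation at $t=0$, then evaluate its $t$-derivative $\partial_t(u\circ F)=Q(x_0)\Psi_{(x_0,L/2)}(t)$ (the first formula of Lemma~\ref{claim:der}) at $t=0$, using $\Psi_{(x_0,L/2)}(0)=1$ and $\partial_tF=\vec b\circ F$. This is exactly what you do, and your checks that $(x_0,L/2)\in D_\alpha$ and $S\notin D_\alpha$ are the right ones.
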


\subsection{Expression of $\Delta u$ where $\Delta_\parallel u=0$}\label{Sec:Appendix:Parallel:Lap}
For $K\in C^2(\rr^2)$, we compute $L_K(u)(x,y):=\hbox{div} (K(x,y)\nabla u(x,y))$ where $u\in C^2(D_\alpha)$ is a solution to $\Delta_{\parallel}u=0$. In order to make the exposition more convenient to tackle, the computation is organized into the three following Lemmas that are devoted to be merged to get an expression of $L_Ku$. 
\begin{lemma}\label{claim:lk}  Let $K\in C^1(\rr^2)$ be a function and define the operator $L_K:=\hbox{div} (K(x,y)\nabla \cdot)$. Fix $\alpha\in (0,d/2)$ and let $u\in C^2(D_\alpha)$ be a function. Then, in the chart $F$ we have that
\begin{equation}
	\begin{multlined}[0.85\textwidth]
(L_K u)\circ F(x_0,t)=\sum_{i,j}g^{ij}(K\circ F)\partial_{ij}(u\circ F)\\
+\sum_k\left(-\sum_{p,q}(K\circ F)g^{pq}\Gamma_{pq}^k+\sum_pg^{pk}\partial_p (K\circ F)\right)\partial_k(u\circ F)\,.
	\end{multlined}
\end{equation}
\end{lemma}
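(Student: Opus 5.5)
The plan is to argue intrinsically, as in the proof of Lemma~\ref{11}. Since $F$ is a $C^1$-diffeomorphism (Lemma~\ref{lemma:Pullback}) and $g=F^\star\hbox{Eucl}$, $F$ is an isometry from $((0,\alpha)\times\rr,g)$ onto $(D_\alpha,\hbox{Eucl})$. Gradient, divergence and multiplication by functions are intrinsic operations, so they commute with pullback. Setting $\tilde K:=K\circ F$ and $\tilde u:=u\circ F$, we therefore get
$$(L_Ku)\circ F=\hbox{div}_g\big(\tilde K\,\nabla_g\tilde u\big),$$
where $\nabla_g\tilde u$ is the Riemannian gradient, with components $(\nabla_g\tilde u)^k=\sum_p g^{kp}\partial_p\tilde u$. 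The rest of the proof is the coordinate expansion of the right-hand side.

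Next, apply the product rule for the divergence:
$$\hbox{div}_g(\tilde K X)=\tilde K\,\hbox{div}_g X+\langle d\tilde K,X\rangle,$$
with $X=\nabla_g\tilde u$. The second term is $\sum_{p,k}g^{pk}\partial_p\tilde K\,\partial_k\tilde u$, which is exactly the $\partial_p(K\circ F)$ contribution in the statement. For the first term, write $\hbox{div}_g\nabla_g\tilde u$ as the trace of the Hessian, as in the proof of Lemma~\ref{11}:
$$\hbox{div}_g\nabla_g\tilde u=\sum_{i,j}g^{ij}(\nabla^2\tilde u)_{ij},\qquad (\nabla^2\tilde u)_{ij}=\partial_{ij}\tilde u-\sum_k\Gamma_{ij}^k\partial_k\tilde u,$$
with the Christoffel symbols \eqref{def:christoffel}. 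Substituting this gives $\tilde K\sum_{i,j}g^{ij}\partial_{ij}\tilde u-\sum_k\big(\sum_{p,q}\tilde K g^{pq}\Gamma^k_{pq}\big)\partial_k\tilde u$. Adding the two contributions yields the formula.

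The only delicate point is regularity. The statement assumes only $u\in C^2$, whereas the chart $F$ and the metric need more smoothness. The Christoffel symbols involve $\partial g$, hence second derivatives of $F$. These exist and are continuous because $\vec b\in C^2$ away from $S$, which gives the flows $S^{(1)}_t$ and $S^{(2)}_t$ of Definitions~\ref{Def:A3}--\ref{Def:A4}. So $F$ is in fact $C^2$ on $(0,\alpha)\times\rr$, and I would state this explicitly. With that, the pullback identities hold pointwise.

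As a cross-check that avoids geometric language, one can verify the formula by the chain rule. Write $\partial_{ij}\tilde u=\sum_{a,b}\partial_aF^{\,\cdot}\partial_bF^{\,\cdot}(\partial^2u)+\sum\partial_{ij}F\cdot\nabla u$. The term containing $\partial^2_{ij}F$ is what the Christoffel correction cancels: using $\partial_{ij}F=\sum_k\Gamma^k_{ij}\partial_kF$, which holds because the Euclidean connection is flat, and $\sum g^{ij}\partial_iF\otimes\partial_jF=\hbox{Id}$, the expression reduces to $\Delta u$ plus the $K$-gradient term.

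I expect the main obstacle to be only bookkeeping of indices and symmetries, together with justifying $F\in C^2$. Neither step appears conceptually hard.
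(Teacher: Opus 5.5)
Your proposal is correct and follows essentially the same route as the paper: pull back along the isometry $F$ to obtain $\hbox{div}_g(\tilde K\nabla_g\tilde u)$, then expand $\sum_{i,j}g^{ij}\nabla_i(\tilde K\partial_j\tilde u)$ with the Christoffel symbols. Applying the divergence product rule before expanding is only a cosmetic reordering of the paper's Leibniz step. Your observation that $F$ is in fact $C^2$ (because $\vec b\in C^2$ away from $S$, which $D_\alpha$ avoids), so that the $\Gamma^k_{ij}$ are continuous, makes explicit a regularity point the paper leaves implicit when it only asserts that $F$ is a $C^1$-diffeomorphism.
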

\begin{proof}[Proof of Lemma~\ref{claim:lk}] \One{Again, this identity can be verified through direct calculation; however, we provide a proof based on Riemannian geometry. Following the same approach as for $\Delta_\parallel$, we set $\tilde{K}:=K\circ F$ and $\tilde{u}:=u\circ F$.} We have that
\begin{align*}
(L_K u)\circ F&= (\hbox{div}_{\hbox{Eucl}} (K\nabla u))\circ F=\hbox{div}_{F^\star \hbox{Eucl}} ((K\circ F)\nabla (u\circ F))=\hbox{div}_{g} (\tilde{K}\nabla\tilde{u})\\
&= \sum_{ij} g^{ij}\nabla_i(\tilde{K}\partial_j\tilde{u})=\sum_{ij} g^{ij}\left(\partial_i(\tilde{K}\partial_j\tilde{u})-\sum_k\Gamma_{ij}^k\tilde{K}\partial_k\tilde{u}\right)
\end{align*}
which yields the expected result.
\end{proof}

\begin{lemma}\label{claim:der} Fix $\alpha\in (0,d/2)$ and let $u\in C^2(D_\alpha)$ be such that $\Delta_{\parallel}u=0$ in $D_\alpha$ and let $P,Q\in C^2((0,\alpha))$ be as in Lemma~\ref{exp:u}. Assume that $B\in C^3(\rr^2,\rr^2)$ and $k\in C^3(\rr^2)$.
Denoting
\begin{equation}
	\Theta_{(x_0,L/2)}(t) = -\int_0^t DH\big(\Phi_s(x_0, L/2)\big)\left[S_s^{(1)}(x_0)\right]\, ds \,,
\end{equation}
we have
\begin{align*}
&\partial_t(u\circ F)=Q(x_0)\Psi_{(x_0,L/2)}(t) \,, \\[0.8em]
&\partial_{tt}(u\circ F)=-H\circ\Phi_t(x_0,\tau)Q(x_0)\Psi_{(x_0,L/2)}(t)\,, \\[0.8em]
&\begin{multlined}[\textwidth]
\partial_{x_0}(u\circ F)=\partial_{x_0}P(x_0)+\partial_{x_0}Q(x_0)\int_0^t\Psi_{(x_0,L/2)}(s) ds\\[-0.5em]
+Q(x_0)\int_0^t \Theta_{(x_0,L/2)}(s) \Psi_{(x_0,L/2)}(s)\, ds\,,
\end{multlined}\\[0.8em]
&\partial_{t x_0}(u\circ F)= \partial_{x_0}Q(x_0)\Psi_{(x_0,L/2)}(t)+Q(x_0)\Theta_{(x_0,L/2)}(t)\Psi_{(x_0,L/2)}(t)\,,
\end{align*}
\begin{align*}
&\begin{multlined}[\textwidth]
	\partial_{x_0 x_0}(u\circ F)=\partial_{x_0x_0}P(x_0)+\partial_{x_0x_0}Q(x_0)\int_0^t\Psi_{(x_0,L/2)}(s) ds\\
	+Q(x_0)\int_0^t\left(-\int_0^sD^2H\big( \Phi_\tau(x_0, L/2)\big)\left[S_\tau^{(1)}(x_0),S_\tau^{(1)}(x_0)\right]\, d\tau\right)\Psi_{(x_0,L/2)}(s)\, ds\\
	+Q(x_0)\int_0^t \Theta_{(x_0,L/2)}(s)^2\Psi_{(x_0,L/2)}(s)\, ds\\
	-Q(x_0)\int_0^t\Psi_{(x_0,L/2)}(s)\left(\int_0^s DH\big(\Phi_\tau (x_0, L/2)\big)\left[S_\tau^{(2)}(x_0)\right]\, d\tau\right)\, ds \\
+2\partial_{x_0}Q(x_0)\int_0^t\Theta_{(x_0,L/2)}(s)\Psi_{(x_0,L/2)}(s)\, ds\,.
\end{multlined}
\end{align*}
\end{lemma}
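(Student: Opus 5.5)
The plan is to differentiate the representation of Lemma~\ref{exp:u} directly. That lemma gives
\[
u\circ F(x_0,t)=P(x_0)+Q(x_0)\int_0^t\Psi_{(x_0,L/2)}(s)\,ds ,
\]
and each identity of the statement should follow by differentiating under the integral sign with respect to $t$ or $x_0$. Lemma~\ref{fact:pres:1} identifies $P$ and $Q$ with boundary data on the reference line, but it is not needed for the computation. The only ingredient beyond elementary calculus is the derivative of $\Psi$ with respect to the parameter $x_0$.

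\textbf{The $t$-derivatives.} By the fundamental theorem of calculus, $\partial_t(u\circ F)=Q(x_0)\Psi_{(x_0,L/2)}(t)$. Since
\[
\Psi_{(x_0,L/2)}(t)=\exp\Big(-\int_0^t H(\Phi_\tau(x_0,L/2))\,d\tau\Big),
\]
differentiating once more in $t$ gives $\partial_t\Psi=-H(\Phi_t(x_0,L/2))\,\Psi$. This yields the formula for $\partial_{tt}(u\circ F)$; the argument written $\Phi_t(x_0,\tau)$ in the statement should be read as $\Phi_t(x_0,L/2)$.

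\textbf{The key $x_0$-derivative.} Differentiating the exponent with respect to $x_0$, using the chain rule and Definition~\ref{Def:A3} ($\partial_{x_0}\Phi_\tau=S^{(1)}_\tau$), gives
\[
\partial_{x_0}\Psi_{(x_0,L/2)}(s)=\Theta_{(x_0,L/2)}(s)\,\Psi_{(x_0,L/2)}(s).
\]
Differentiating $\Theta$ again and using Definition~\ref{Def:A4} ($\partial_{x_0}S^{(1)}_\tau=S^{(2)}_\tau$) gives
\[
\partial_{x_0}\Theta_{(x_0,L/2)}(s)=-\int_0^s\Big(D^2H(\Phi_\tau)\big[S^{(1)}_\tau,S^{(1)}_\tau\big]+DH(\Phi_\tau)\big[S^{(2)}_\tau\big]\Big)d\tau .
\]
These two rules generate every remaining term.

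\textbf{The mixed and second $x_0$-derivatives.}
\begin{itemize}
\item $\partial_{x_0}(u\circ F)$ follows from the product rule on $Q(x_0)\int_0^t\Psi$ together with the rule for $\partial_{x_0}\Psi$.
\item $\partial_{tx_0}(u\circ F)$ is obtained by differentiating $\partial_t(u\circ F)=Q\Psi$ in $x_0$. Alternatively, one can differentiate the previous expression in $t$; Schwarz's theorem guarantees the two routes agree.
\item $\partial_{x_0x_0}(u\circ F)$ comes from differentiating $\partial_{x_0}(u\circ F)$ once more and collecting terms. The $\partial_{x_0}Q$ terms combine into the factor $2$. The product $\partial_{x_0}(\Theta\Psi)=(\partial_{x_0}\Theta)\Psi+\Theta^2\Psi$ produces the $D^2H$ term, the $DH[S^{(2)}]$ term and the $\Theta^2$ term.
\end{itemize}

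\textbf{Main obstacle: justifying the differentiation.} The computation is mechanical; the real work is checking that differentiation under the integral sign and exchanging the order of derivatives are legitimate, with the required regularity. I would argue as follows.
\begin{itemize}
\item On compact subsets of $(0,\alpha)\times[0,t]$, the flow stays in $D_\alpha$ (Lemma~\ref{claim:Dalpha}), which is bounded away from $S$. There $\vec b$ is $C^3$ because $B\in C^3$, so $H=\mathrm{Tr}\,D\vec b+\langle\nabla k,\vec b\rangle/k$ is $C^2$.
\item Standard dependence-on-parameters theory then makes $\Phi_\tau(x_0,L/2)$ a $C^2$ function of $x_0$, with derivatives $S^{(1)}$ and $S^{(2)}$.
\item Consequently the integrands are jointly continuous with continuous $x_0$-derivatives up to second order, which justifies the Leibniz rule.
\item Finally, $P,Q\in C^2$ is exactly the regularity needed for the terms $\partial_{x_0x_0}P$ and $\partial_{x_0x_0}Q$ to make sense.
\end{itemize}
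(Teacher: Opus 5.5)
Your proposal is correct and follows the argument the paper leaves implicit: the paper states this lemma without proof, as a direct consequence of Lemma~\ref{exp:u}, obtained by differentiating $P(x_0)+Q(x_0)\int_0^t\Psi_{(x_0,L/2)}(s)\,ds$ under the integral sign with $\partial_{x_0}\Psi=\Theta\Psi$ and $\partial_{x_0}S^{(1)}_\tau=S^{(2)}_\tau$. Reading $\Phi_t(x_0,\tau)$ as $\Phi_t(x_0,L/2)$ is right, and your regularity argument supplies the justification the paper omits: $D_\alpha$ stays at positive distance from $S$, so $H\in C^2$ and the flow is $C^2$ in $x_0$.
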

\begin{remark}
	Focusing on functions $u$ that are constant along the magnetic field lines, we have 
$	Q(x_0)= \partial_{x_0} Q(x_0) = \partial^2_{x_0x_0} Q(x_0) = 0 \,.$
	For this class of functions, the derivatives of $u\circ F$ simplify into Eqs.~\eqref{eq:derivatives:UcircF:simple}.
\end{remark}
\begin{lemma}\label{claim:christo} Assuming that $g^{-1}$ is defined by Eqs.~\eqref{eqs:g:coeffg}, the following identity holds
\begin{equation}
\sum_{i,j}g^{ij}\Gamma_{ij}^k=\sum_{i,j,m}g^{ij}g^{km}\partial_ig_{mj}-\frac{1}{2}\sum_{i,j,m}g^{ij}g^{km}\partial_m g_{ij},
\end{equation}
with
\begin{eqnarray*}
\partial_t g_{tt}=\partial_{x_0}g_{tt}&=&0\\
\partial_t g_{t x_0}&=& \langle D\vec{b}(\Phi_t(x_0, L/2))[\vec{b}\circ\Phi_t(x_0, L/2)], S_t^{(1)}(x_0)\rangle\\
\partial_{x_0} g_{t x_0}&=& \langle  \vec{b}\circ\Phi_t(x_0, L/2), S_t^{(2)}(x_0)\rangle\\
&&+\langle D\vec{b}(\Phi_t(x_0, L/2))[S_t^{(1)}(x_0)], S_t^{(1)}(x_0)\rangle\\
\partial_t g_{x_0 x_0}&=& 2\langle D\vec{b}(\Phi_t(x_0, L/2))[S_t^{(1)}(x_0)], S_t^{(1)}(x_0)\rangle\\
\partial_{x_0} g_{x_0 x_0}&=& 2\langle S_t^{(2)}(x_0), S_t^{(1)}(x_0)\rangle
\end{eqnarray*}
\end{lemma}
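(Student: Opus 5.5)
The proof splits into two independent parts: the contracted Christoffel identity, and the six formulas for the derivatives of the metric coefficients.

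\textbf{Contracted Christoffel identity.} Start from the definition \eqref{def:christoffel} and contract with $g^{ij}$:
\begin{equation*}
\sum_{i,j}g^{ij}\Gamma_{ij}^k=\frac12\sum_{i,j,m}g^{ij}g^{km}\left(\partial_i g_{mj}+\partial_j g_{im}-\partial_m g_{ij}\right).
\end{equation*}
The matrices $g$ and $g^{-1}$ are symmetric. Exchanging the dummy indices $i\leftrightarrow j$ in the second term, and using $g_{im}=g_{mi}$, turns it into $\sum g^{ij}g^{km}\partial_i g_{mj}$, i.e.\ the first term. The two halves therefore add up, which gives exactly the stated formula. This step is purely algebraic.

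\textbf{Derivatives of the metric coefficients.} Here I work from the expressions \eqref{eq:g:coeff}, with $F(x_0,t)=\Phi_t(x_0,L/2)$. The basic facts are:
\begin{itemize}
\item $\partial_t F=\vec b\circ\Phi_t(x_0,L/2)$, by \eqref{def:flow};
\item $\partial_{x_0}F=S^{(1)}_t(x_0)$ and $\partial^2_{x_0x_0}F=S^{(2)}_t(x_0)$, by Definitions~\ref{Def:A3} and \ref{Def:A4};
\item $\partial_t S^{(1)}_t=D\vec b(\Phi_t)[S^{(1)}_t]$, by Definition~\ref{Def:A3};
\item $\partial_{x_0}\big(\vec b\circ\Phi_t(x_0,L/2)\big)=D\vec b(\Phi_t)[S^{(1)}_t]$, by the chain rule.
\end{itemize}
These require $\vec b\in C^2$ away from $S$, together with enough smoothness of $\Phi$ in $(x_0,t)$ for the mixed partials to commute. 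Both hold on $D_\alpha$, which excludes $S$ (Lemma~\ref{claim:Dalpha}).

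The individual coefficients then go as follows.
\begin{itemize}
\item \emph{$g_{tt}$.} Since $g_{tt}=\|\vec b\|^2\equiv1$, both of its derivatives vanish.
\item \emph{$\partial_t g_{tx_0}$.} Differentiate $\langle \vec b\circ\Phi_t, S^{(1)}_t\rangle$ in $t$. This gives $\langle D\vec b[\vec b],S^{(1)}\rangle+\langle \vec b, D\vec b[S^{(1)}]\rangle$. Differentiating $\|\vec b\|^2=1$ along the direction $v$ gives $\langle \vec b, D\vec b[v]\rangle=0$ for every $v$, so the second term vanishes and the stated formula follows.
\item \emph{$\partial_{x_0} g_{tx_0}$.} Differentiating in $x_0$ gives $\langle D\vec b[S^{(1)}],S^{(1)}\rangle+\langle\vec b,S^{(2)}\rangle$ directly.
\item \emph{$\partial_t g_{x_0x_0}$.} This equals $2\langle \partial_t S^{(1)},S^{(1)}\rangle=2\langle D\vec b[S^{(1)}],S^{(1)}\rangle$.
\item \emph{$\partial_{x_0} g_{x_0x_0}$.} This equals $2\langle S^{(2)},S^{(1)}\rangle$.
\end{itemize}

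\textbf{Main obstacle.} The only nonroutine point is $\partial_t g_{tx_0}$. A naive product rule produces the extra term $\langle \vec b,D\vec b[S^{(1)}]\rangle$, which does not appear in the statement. It must be removed using the normalization $\|\vec b\|=1$, through the identity $\langle \vec b, D\vec b\,\cdot\rangle=0$ obtained by differentiating $\|\vec b\|^2=1$. This identity holds on $D_\alpha$ because $\vec b$ is well defined and $C^2$ there.

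The remaining justification is that the interchanges $\partial_t\partial_{x_0}\Phi=\partial_{x_0}\partial_t\Phi$ are legitimate. This follows from the $C^1$ dependence of flows on initial data, exactly as used to derive \eqref{def:flow:2} and \eqref{def:flow:3}.
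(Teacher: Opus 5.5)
Your proposal is correct and follows the paper's proof. The contracted Christoffel identity is the index-symmetry computation the paper calls straightforward. The only nontrivial step, $\partial_t g_{tx_0}$, is handled exactly as in the paper: the extra term $\langle \vec b, D\vec b[S^{(1)}_t]\rangle$ is removed by differentiating $\|\vec b\|^2=1$.
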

\begin{proof}[Proof of Lemma~\ref{claim:christo}] The computations are straightforward, only $\partial_t g_{t x_0}$ requires a bit of detail. Indeed, a direct computation yields
\begin{align*}
\partial_t g_{t x_0}&=\partial_t \langle \partial_t F,\partial_{x_0}F\rangle =\partial_t \langle \vec{b}\circ F,\partial_{x_0}F\rangle=\langle D\vec{b}(F)[\partial_t F],\partial_{x_0}F\rangle+\langle \vec{b}\circ F,\partial_{tx_0}F\rangle\\
&=\langle D\vec{b}(F)[\vec{b}\circ F],S^{(1)}_t(x_0)\rangle+\langle \vec{b}\circ F,D\vec{b}(F)[\partial_{x_0}F])\rangle\,.
\end{align*}
Independently, for any point $p\in \rr^2$ such that $\vec B(p)\neq \vec 0$ and any vector $X\in \rr^2$, we have that
\begin{eqnarray*}
2\langle \vec{b}(p),D\vec{b}(p)[X])=\frac{d}{ds} \Vert \vec{b}(p+sX)\Vert^2_{|s=0}=0\hbox{ since }\Vert \vec{b}\Vert=1.
\end{eqnarray*}
This yields the expected expression of $\partial_t g_{t x_0}$.
\end{proof}

\One{The preceding results lead to the following concluding theorem:}
\begin{theorem} Let $\vec B\in C^3(\rr^2,\rr^2)$ be a vector field such that \eqref{H1}-\eqref{H4} hold. Let $k\in C^3(\rr^2)$ and $K\in C^1(\rr^2)$ be two  functions such that $k(x,y)\neq 0$ for all $(x,y)\in\rr^2$. Let us fix $\alpha\in (0, d/2)$ and let us consider $D_\alpha\subset (0,d/2)\times \rr$ defined in Lemma \ref{claim:Dalpha}. Let us consider $u\in C^2(D_\alpha)$ be such that $\Delta_\parallel u=0$ in $D_\alpha$. Let $P,Q\in C^2((0,\alpha))$ be given by Lemma~\ref{exp:u}. Then $L_Ku:=\hbox{div}(K\nabla u)$ is given by the expression of Lemma~\ref{claim:lk} where the derivatives of $u\circ F$ are given in Lemma~\ref{claim:der} and the expression involving the $\Gamma_{ij}^k$'s is in Lemma~\ref{claim:christo}.
\end{theorem}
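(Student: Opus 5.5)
The theorem is an assembly of the preceding lemmas, so the proof will check that their hypotheses hold simultaneously and then substitute one into another.

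\emph{Step 1: the chart.} Since $\alpha\in(0,d/2)$, Lemma~\ref{claim:Dalpha} shows that $D_\alpha$ is invariant under $\Phi_t$. Lemma~\ref{lemma:Pullback} then shows that $F:(0,\alpha)\times\rr\to D_\alpha$ is a $C^1$-diffeomorphism. For the second-order formulas we need more regularity. Because $\vec B\in C^3$ and $\vec B$ does not vanish on $\overline{D_\alpha}$ (we have $S\notin \overline{D_\alpha}$ since $\alpha<d/2$), the field $\vec b$ is $C^3$ near $D_\alpha$. By smooth dependence on initial data, $F$ is then $C^3$, and $S^{(1)}_t,S^{(2)}_t$ are exactly $\partial_{x_0}F$ and $\partial^2_{x_0x_0}F$. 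This justifies Definitions~\ref{Def:A3}--\ref{Def:A4}. By Lemma~\ref{exp:detg} and Lemma~\ref{5}, $\det g=\mathcal D^2>0$, so $g$ is a genuine Riemannian metric and $g^{-1}$ in \eqref{exp:invg} is well defined. Moreover $g$ is $C^2$, so the Christoffel symbols \eqref{def:christoffel} exist.

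\emph{Step 2: invariance of the divergence.} Since $F$ is an isometry from $((0,\alpha)\times\rr,g)$ onto $(D_\alpha,\mathrm{Eucl})$, and $g=F^\star\mathrm{Eucl}$, gradient, divergence and Levi-Civita connection are all natural under $F$. Hence
\[
(L_Ku)\circ F=\mathrm{div}_g(\tilde K\nabla_g\tilde u),
\]
which is the content of Lemma~\ref{claim:lk}; this holds for any $u\in C^2(D_\alpha)$ and $K\in C^1$. The contracted symbols $\sum g^{ij}\Gamma^k_{ij}$ are then rewritten via Lemma~\ref{claim:christo}. That lemma is obtained by contracting \eqref{def:christoffel} with $g^{ij}$, using symmetry of $g$ to merge the $\partial_i g_{mj}$ and $\partial_j g_{im}$ terms. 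The derivatives $\partial_m g_{ij}$ follow by differentiating \eqref{eq:g:coeff}, together with the identity $\langle\vec b,D\vec b[X]\rangle=0$ coming from $\|\vec b\|=1$.

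\emph{Step 3: the pulled-back derivatives.} Since $\Delta_\parallel u=0$ in $D_\alpha$ with $k\in C^3$ nonvanishing, Lemma~\ref{exp:u} supplies $P,Q\in C^2((0,\alpha))$ with the representation \eqref{eq:Lemma:A14}. Lemma~\ref{fact:pres:1} identifies them through boundary data on the reference line. The formulas of Lemma~\ref{claim:der} then follow by differentiating \eqref{eq:Lemma:A14} under the integral sign:
\begin{itemize}
\item $\partial_t$ and $\partial_{tt}$ come from the fundamental theorem of calculus and $\Psi'=-H\Psi$;
\item $\partial_{x_0}\Psi=\Theta\Psi$, by the chain rule $\partial_{x_0}[H\circ\Phi_\tau]=DH[S^{(1)}_\tau]$;
\item a second $x_0$-derivative produces $D^2H[S^{(1)},S^{(1)}]$, $DH[S^{(2)}]$, and the $\Theta^2$ and cross terms.
\end{itemize}

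\emph{Main obstacle.} The delicate point is justifying differentiation under the integral twice in $x_0$. This needs $H\in C^2$, i.e.\ $D\vec b\in C^2$ and $\nabla k\in C^2$; this is precisely why the statement assumes $\vec B\in C^3$ and $k\in C^3$. It also needs locally uniform bounds in $(x_0,s)$ for $s$ in a compact interval $[0,t]$, which continuity of $S^{(1)},S^{(2)}$ on compacts gives. A further subtlety is that $u\circ F$ is a priori only $C^2$, while $P,Q$ are $C^2$. So I would verify that the mixed and second derivatives computed from the formula coincide with the classical ones, which follows from the equality of $u\circ F$ with the formula. Substituting Steps 2 and 3 into Lemma~\ref{claim:lk} then yields the theorem.
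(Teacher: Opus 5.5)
Your proposal is correct and follows the paper's route. The paper states this theorem without a separate proof, as the direct combination of Lemmas~\ref{claim:lk}, \ref{claim:der} and \ref{claim:christo} on the chart of Lemma~\ref{lemma:Pullback} with the representation of Lemma~\ref{exp:u}, which is exactly your Steps 1--3; your regularity checks ($\vec B\neq 0$ on $\overline{D_\alpha}$, $F\in C^3$, $g\in C^2$, $H\in C^2$ for differentiating twice under the integral) make explicit what the paper leaves implicit.
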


\subsection{Application of the flow-transported-solution method to the homogeneous anisotropic problem}\label{Sec:Appendix:Application}
Finally, we provide a justification for the so-called flow-transported solution procedure, which is used to generate reference solutions for strongly anisotropic problems and thus validate numerical methods. This procedure is based on the decomposition of the anisotropic problem solution $u^\eps$ in the form $u^\eps=u^0+\eps u^1$, where $u^0$ is a function with a vanishing parallel Laplacian. Within the solution manufacturing method, $u^0$ as well as its Laplacian are determined analytically. These quantities are computed thanks to the integration of convenient boundary quantities along flows defined by the $b$-field. 

We keep the same notations as in the preceding section. Integrating the expression \eqref{exp:delta:p} with $k\equiv 1$, we get
\begin{proposition}\label{prop:existence} Let $\vec B\in C^3(\rr^2,\rr^2)$ be a vector field such that \eqref{H1}-\eqref{H4} hold. Let us fix $\alpha\in (0, d/2)$ and let us consider $D_\alpha\subset (0,d/2)\times \rr$ defined in Lemma \ref{claim:Dalpha}. Then, for all $f\in C^2(D_\alpha)$ and for all $P,Q\in C^2((0,\alpha))$, there exists a unique function $u\in C^2(D_\alpha)$ such that
\begin{equation}
\left\{\begin{array}{cc}\label{eq:Lap:Parallel}
-\Delta_\parallel u:=-\nabla\cdot\big((\vec{b}\otimes\vec{b})\nabla u\big)=f\,, &\hbox{in }D_\alpha\,,\\
u(x_0, L/2)=P(x_0)\,,&\hbox{ for all }x_0\in (0,\alpha)\,,\\
\langle \nabla u,\vec{b}\rangle(x_0,L/2)=\partial_yu(x_0, L/2)=Q(x_0) \,,&\hbox{ for all }x_0\in (0,\alpha)\,.
\end{array}\right.
\end{equation}
Moreover, the explicit expression in the chart $F$ is
\begin{equation}\label{eq:explicit:expression}
	\begin{multlined}[0.85\linewidth]
u\circ F(x_0,t)=P(x_0)+Q(x_0)\int_0^t\exp \Big(-\mathcal{H}(x_0,s)\Big)\, ds\\
+\int_0^t\Bigg(\exp \Big(-\mathcal{H}(x_0,v)\Big)\int_0^v\tilde{f}(x_0,s)\exp \Big(\mathcal{H}(x_0,s)\Big)\, ds\Bigg)\, dv \,,
	\end{multlined}
\end{equation}
for all $(x_0,t) \in (0,\alpha)\times\rr$, where $\tilde{H}(x_0,t):=H(\Phi_t(x_0, L/2))=\hbox{Tr}(Db(\Phi_t(x_0, L/2)))$, $\mathcal{H}(x_0,s):= \int_0^s \tilde{H}(x_0,t) dt $ and $\tilde{f}(x_0,t):={-}f(\Phi_t(x_0, L/2))$ for all $x_0\in (0,\alpha)$ and $t\in\rr$.
\end{proposition}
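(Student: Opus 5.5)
The plan is to pull the problem back through the chart $F$ and reduce it to a family of second order linear ODEs in $t$, parametrized by $x_0$. By Lemma~\ref{lemma:Pullback}, $F:(0,\alpha)\times\rr\to D_\alpha$ is a $C^1$-diffeomorphism. Hence $u\in C^2(D_\alpha)$ is equivalent to $\tilde u:=u\circ F$ having the corresponding regularity, although the regularity transfer needs care (see below). Lemma~\ref{11} with $k\equiv1$ gives $H=\hbox{Tr}(D\vec b)$ and
$$(\Delta_\parallel u)\circ F=\partial_{tt}\tilde u+\tilde H\,\partial_t\tilde u=e^{-\mathcal H}\partial_t\big(e^{\mathcal H}\partial_t\tilde u\big).$$
The equation $-\Delta_\parallel u=f$ is therefore equivalent to $\partial_t(e^{\mathcal H}\partial_t\tilde u)=\tilde f\,e^{\mathcal H}$ on $(0,\alpha)\times\rr$, with $\tilde f=-f\circ F$ as in the statement.

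For the boundary data, I use $F(x_0,0)=(x_0,L/2)$ and $\partial_tF(x_0,0)=\vec b(x_0,L/2)=(0,1)$, which holds by \eqref{ppty:B}. This gives $\tilde u(x_0,0)=u(x_0,L/2)=P(x_0)$ and $\partial_t\tilde u(x_0,0)=\langle\nabla u,\vec b\rangle(x_0,L/2)=\partial_yu(x_0,L/2)=Q(x_0)$. These are the two Cauchy data at $t=0$, with $\mathcal H(x_0,0)=0$.

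For fixed $x_0$ the problem is a linear Cauchy problem. Integrating once gives
$$e^{\mathcal H(x_0,v)}\partial_t\tilde u(x_0,v)=Q(x_0)+\int_0^v\tilde f e^{\mathcal H}\,ds,$$
and integrating a second time yields exactly \eqref{eq:explicit:expression}. This settles uniqueness: any $C^2$ solution must satisfy the formula pointwise in $(x_0,t)$, and $F$ is bijective. For existence, I define $\tilde u$ by \eqref{eq:explicit:expression}, set $u:=\tilde u\circ F^{-1}$, and read the computation backwards to check the equation and the data.

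The main obstacle is regularity: showing $u\in C^2(D_\alpha)$. Lemma~\ref{lemma:Pullback} only states that $F$ is $C^1$. I would first upgrade this. Since $\vec B\in C^3$ and $\vec B\neq0$ on $\overline{D_\alpha}$ away from $S$, $\vec b$ is $C^3$ there, so $(x_0,t)\mapsto\Phi_t(x_0,L/2)$ is $C^3$ by differentiable dependence on initial data. Its Jacobian $\mathcal D$ never vanishes by Lemma~\ref{5}, so the inverse function theorem makes $F$ a $C^3$-diffeomorphism.

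Next I check the regularity of the ingredients of $\tilde u$. $\tilde H=\hbox{Tr}(D\vec b)\circ F$ is $C^2$, hence $\mathcal H$ is $C^2$ in $x_0$ and smooth in $t$. $\tilde f=-f\circ F$ is $C^2$, and $P,Q\in C^2$. Differentiation under the integral then gives $\tilde u\in C^2$, so $u=\tilde u\circ F^{-1}\in C^2(D_\alpha)$.
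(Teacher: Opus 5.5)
Your proposal is correct and follows the same route as the paper, which obtains the proposition simply by integrating the chart expression \eqref{exp:delta:p} with $k\equiv 1$ twice in $t$, using the Cauchy data $\tilde u(x_0,0)=P(x_0)$ and $\partial_t\tilde u(x_0,0)=Q(x_0)$ (the latter because $\vec b(x_0,L/2)=(0,1)$). You also upgrade $F$ from the $C^1$-diffeomorphism of Lemma~\ref{lemma:Pullback} to a $C^3$-diffeomorphism; this guarantees that $u=\tilde u\circ F^{-1}$ is genuinely $C^2$ and that uniqueness applies to every $C^2$ solution, a regularity detail the paper leaves implicit and which you handle correctly.
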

From the explicit expression stated by Eq.~\eqref{eq:explicit:expression}, it is straightforward to get
\begin{coro}[Stability]\label{coro:stab} Under the hypotheses and notation of Prop.~\ref{prop:existence}, let $u_{f,P,Q}$ be the unique solution to Eq.~\eqref{eq:Lap:Parallel} for given $f,P,Q$. Then for all $\delta\in (0,\alpha/2)$, all  $f,f_0\in C^2(D_\alpha)$ and for all $P,P_0,Q,Q_0\in C^2((0,\alpha))$, there exists $C(\alpha,\delta,\vec{b})$ such that
\begin{equation}
	\begin{multlined}[0.85\textwidth]
\Vert u_{f,P,Q}-u_{f_0,P_0,Q_0}\Vert_{C^2(D^\delta_\alpha)}\leq C(\alpha,\delta,\vec{b})\Big(\Vert f-f_0\Vert_{C^2(D^\delta_\alpha)}\\+\Vert P-P_0\Vert_{C^2(\delta,\alpha-\delta)}+\Vert Q-Q_0\Vert_{C^2(\delta,\alpha-\delta)}\Big)\,,
	\end{multlined}
\end{equation}
where $D^\delta_\alpha:=F((\delta,\alpha-\delta)\times (-\delta^{-1},\delta^{-1}))$.
\end{coro}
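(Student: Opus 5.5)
By linearity of the problem \eqref{eq:Lap:Parallel}, the difference $w:=u_{f,P,Q}-u_{f_0,P_0,Q_0}$ is itself the unique solution given by Prop.~\ref{prop:existence} with data $(f-f_0,P-P_0,Q-Q_0)$. It therefore suffices to bound $\Vert u_{f,P,Q}\Vert_{C^2(D^\delta_\alpha)}$ linearly in $\Vert f\Vert_{C^2}+\Vert P\Vert_{C^2}+\Vert Q\Vert_{C^2}$. I would do this in two stages. First I bound the pulled-back function $\tilde u:=u\circ F$ and its derivatives up to order two on the rectangle $R_\delta:=(\delta,\alpha-\delta)\times(-\delta^{-1},\delta^{-1})$. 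Then I transfer these bounds back to $D^\delta_\alpha$ through the chart $F$.

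For the first stage I use the explicit formula \eqref{eq:explicit:expression}. The closure $\overline{R_\delta}$ is compact and contained in $(0,\alpha)\times\rr$. Its image $F(\overline{R_\delta})$ is a compact subset of $D_\alpha$, so it stays at positive distance from $S$, and $\vec b$ is $C^3$ in a neighbourhood of it. Hence $\Phi_t(x_0,L/2)$, $S^{(1)}_t$, $S^{(2)}_t$ and $\tilde H$ together with its $x_0$-derivatives up to order two are continuous on $\overline{R_\delta}$, and therefore bounded by constants depending only on $\alpha,\delta,\vec b$. Consequently $\mathcal H$ and $e^{\pm\mathcal H}$, with their $x_0$-derivatives up to order two, are bounded on $\overline{R_\delta}$, since $|t|\le\delta^{-1}$.

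I then differentiate \eqref{eq:explicit:expression} directly:
\begin{itemize}
\item $\partial_t$ produces $Qe^{-\mathcal H}$ plus the inner $f$-integral;
\item $\partial_t^2$ follows by one more differentiation;
\item $\partial_{x_0}$ and $\partial_{x_0}^2$ fall on $P$, $Q$, $\mathcal H$, or on $\tilde f=-f\circ F$, whose derivatives are $Df[S^{(1)}]$, $D^2f[S^{(1)},S^{(1)}]+Df[S^{(2)}]$ and $Df[\vec b]$.
\end{itemize}
Each resulting term is a product of bounded geometric factors with one of $P,Q,f$ or their derivatives up to order two, integrated over an interval of length at most $\delta^{-1}$. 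This gives $\Vert\tilde u\Vert_{C^2(R_\delta)}\le C(\Vert f\Vert_{C^2(D^\delta_\alpha)}+\Vert P\Vert_{C^2}+\Vert Q\Vert_{C^2})$. These computations are essentially those of Lemma~\ref{claim:der}, with an extra source term.

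The second stage is the transfer back to $D^\delta_\alpha$, and I expect it to be the main obstacle. Writing $u=\tilde u\circ F^{-1}$, the chain rule gives $Du=D\tilde u\,(DF)^{-1}$. The second derivatives $D^2u$ involve $D^2\tilde u$, the factor $(DF)^{-1}$, and $D^2F$, which is built from $S^{(2)}$, $D\vec b[\vec b]$ and $D\vec b[S^{(1)}]$. To control $(DF)^{-1}$ I need a lower bound on $|\det DF|=\mathcal D$. Lemma~\ref{5} supplies it: $\mathcal D=\exp\big(\int_0^t\mathrm{Tr}\,D\vec b\big)$ with the integrand bounded on the compact set, so $\mathcal D\ge e^{-C/\delta}>0$ on $\overline{R_\delta}$. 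Equivalently, $\det g=\mathcal D^2$ by Lemma~\ref{exp:detg}, so $g^{-1}$ is bounded through \eqref{exp:invg}.

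Two points need care in this stage:
\begin{enumerate}
\item The constants degrade as $\delta\to0$, for instance near the $X$-point or for long arclengths. This is why the estimate is only claimed on $D^\delta_\alpha$.
\item The $C^2$ norm of $f$ must be taken on $D^\delta_\alpha$, since $\tilde f$ is evaluated only at $F(x_0,s)$ with $|s|\le|t|<\delta^{-1}$, and these points lie in $D^\delta_\alpha$.
\end{enumerate}
Combining the two stages yields the constant $C(\alpha,\delta,\vec b)$.
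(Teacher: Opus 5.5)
Your proposal is correct and follows the paper's route. The paper only asserts that the estimate follows straightforwardly from the explicit formula \eqref{eq:explicit:expression}, and your argument supplies exactly the details that assertion leaves implicit: linearity, differentiating that formula on the compact rectangle $\overline{R_\delta}$, and transferring back through $F$ with the lower bound on $\mathcal D$ from Lemma~\ref{5}.
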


\begin{theorem}\label{Th:B2} Let $B\in C^5(\rr^2,\rr^2)$ be a vector field such that \eqref{H1}-\eqref{H4} hold. Let us fix $\alpha\in (0, d/2)$ and let us consider $D_\alpha\subset (0,d/2)\times \rr$ defined by Lemma~\ref{claim:Dalpha}. We fix $f^0\in C^2(D_\alpha)$ and $P,Q\in C^4((0,\alpha))$. Then there exists families $(u^\eps)_\eps\in C^2(D_\alpha)$ and $(f^\eps)_\eps\in C^2(D_\alpha)$ such that
\begin{equation}
\left\{\begin{array}{cc}
-\frac{1}{\eps}\Delta_\parallel u^\eps-\Delta_\perp u^\eps=f^\eps\,, &\hbox{in }D_\alpha\,,\\
u^\eps(x_0, L/2)=P(x_0)\,,&\hbox{ for all }x_0\in (0,\alpha)\,,\\
\langle \nabla u^\eps,\vec{b}\rangle(x_0,L/2)=\partial_yu^\eps(x_0, L/2)=Q(x_0)\,,&\hbox{ for all }x_0\in (0,\alpha)\,,
\end{array}\right.
\end{equation}
with
$$\lim_{\eps\to 0}u^\eps=u^0\hbox{ and }\lim_{\eps\to 0}f^\eps=f^0\hbox{ in }C^0_{loc}(D_\alpha)\,,$$
where $u^0\in C^2(D_\alpha)$ is the unique solution to
\begin{equation}
\left\{\begin{array}{cc}
\Delta_\parallel u^0:=\nabla \cdot\big((\vec{b}\otimes\vec{b})\cdot\nabla u^0\big)=0\,, &\hbox{in }D_\alpha\,,\\
u^0(x_0, L/2)=P(x_0)\,,&\hbox{ for all }x_0\in (0,\alpha)\,,\\
\langle \nabla u^0,\vec{b}\rangle(x_0,L/2)=\partial_yu^0(x_0, L/2)=Q(x_0)\,,&\hbox{ for all }x_0\in (0,\alpha)\,.
\end{array}\right.
\end{equation}
\end{theorem}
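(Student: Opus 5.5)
The construction is explicit and follows the manufacturing philosophy: first build $u^0$, then pick $u^1$, then define $f^\eps$ by applying the operator. With $k\equiv 1$, Proposition~\ref{prop:existence} gives a unique $u^0\in C^2(D_\alpha)$ with $\Delta_\parallel u^0=0$, $u^0(x_0,L/2)=P(x_0)$ and $\langle\nabla u^0,\vec b\rangle(x_0,L/2)=Q(x_0)$. In the chart $F$ it reads $u^0\circ F=P(x_0)+Q(x_0)\int_0^t\Psi_{(x_0,L/2)}(s)\,ds$, as in Lemma~\ref{exp:u}.

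\emph{Step 1: regularity of $u^0$.} The source must contain $\Delta_\perp u^0=\Delta u^0-\Delta_\parallel u^0=\Delta u^0$. Its continuity follows from the Theorem preceding Proposition~\ref{prop:existence} with $K\equiv1$, where the derivatives are given by Lemma~\ref{claim:der} and the Christoffel sums by Lemma~\ref{claim:christo}. These formulas use $DH$, $D^2H$, $S^{(1)}$ and $S^{(2)}$, which is why $B$ is taken in $C^3$.

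\emph{Step 2: the correction.} I want $f^\eps\to f^0$, and I want the $C^2$ regularity of $f^\eps$ to be honest. So I choose $u^1\in C^2(D_\alpha)$ by Proposition~\ref{prop:existence} as the solution of
\[
-\Delta_\parallel u^1=f^0+\Delta u^0,\qquad u^1(x_0,L/2)=0,\qquad \langle\nabla u^1,\vec b\rangle(x_0,L/2)=0 .
\]
This requires $f^0+\Delta u^0\in C^2(D_\alpha)$. That holds because $\Delta u^0$ involves two derivatives of $P,Q$ (hence $P,Q\in C^4$) and of the flow data ($D^2H$ contains $D^3\vec b$, so $B\in C^5$ gives enough room). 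I would then set $u^\eps:=u^0+\eps u^1$ and $f^\eps:=-\frac1\eps\Delta_\parallel u^\eps-\Delta_\perp u^\eps$.

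\emph{Step 3: the equation, boundary data and limits.} Since $\Delta_\parallel u^0=0$,
\[
f^\eps=-\Delta_\parallel u^1-\Delta u^0-\eps\Delta_\perp u^1=f^0-\eps\Delta_\perp u^1 .
\]
The boundary conditions hold because $u^1$ has zero Cauchy data on the reference line. There $\langle\nabla u,\vec b\rangle=\partial_y u$ by \eqref{ppty:B}. We also have $u^\eps-u^0=\eps u^1\to0$ in $C^0_{loc}$ trivially. For $f^\eps\to f^0$ in $C^0_{loc}$, it suffices that $\Delta_\perp u^1$ be continuous, i.e.\ $u^1\in C^2$.

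\emph{Main obstacle.} The delicate point is the regularity bookkeeping. I need $f^\eps\in C^2$, which requires $\Delta_\perp u^1\in C^2$, i.e.\ $u^1\in C^4$. From the explicit formula \eqref{eq:explicit:expression}, $u^1\circ F$ is built from $\tilde f=-(f^0+\Delta u^0)\circ F$ and from $\mathcal H$. Its $x_0$-derivatives therefore require derivatives of the flow $\Phi_t$ in $x_0$, which has the regularity of $\vec b$, and of the right-hand side.

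If $f^0$ is only $C^2$, I would avoid this problem by shifting the construction: take $u^1$ solving $-\Delta_\parallel u^1=\Delta u^0$ with zero data, and put $f^0$ into the equation through an extra term. Concretely, let $w$ be the $C^2$ solution of $-\Delta_\parallel w=f^0$ with zero data, and set $u^\eps=u^0+\eps(u^1+w)$. Then $f^\eps=f^0-\eps\Delta_\perp(u^1+w)$. This is continuous and tends to $f^0$ locally uniformly, and the equation holds pointwise. The statement $f^\eps\in C^2$ then has to be read with the regularity actually available. I would check carefully whether $C^5$ on $B$ and $C^4$ on $P,Q$ give $u^1\in C^4$, which should follow from differentiating the ODEs for $S^{(1)},S^{(2)}$ twice more. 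The contribution of $w$ only enters at order $\eps$ in $C^0_{loc}$.
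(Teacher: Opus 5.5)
Your Steps 1--3 are exactly the paper's proof: $u^0$ from Proposition~\ref{prop:existence}, then $u^1$ with zero Cauchy data and $-\Delta_\parallel u^1=f^0+\Delta u^0$, then $u^\eps=u^0+\eps u^1$ and $f^\eps=f^0-\eps\Delta_\perp u^1$. The equation, the data on $\{y=L/2\}$ and both $C^0_{loc}$ limits follow as you say.

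The regularity problem you raise is genuine, and the paper's proof does not address it either (it only says ``as one checks''). Since $u^1$ is only $C^2$, this $f^\eps$ is only continuous, not $C^2$. The problem is also worse than you suggest, because it does not come only from $f^0$. Even for smooth $f^0$, the source $f^0+\Delta u^0$ is only $C^2$, since $u^0$ is only $C^4$. In the chart, $u^1\circ F$ is a double $t$-integral of that source, so it has no more $x_0$-regularity than the source. For instance, in the model case $\vec b=(0,1)$, $u^1$ contains $-P''(x)(y-L/2)^2/2$. Then $\Delta_\perp u^1$ contains $-P^{(4)}(x)(y-L/2)^2/2$, which is merely continuous when $P\in C^4$. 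So your plan to check whether $B\in C^5$ and $P,Q\in C^4$ give $u^1\in C^4$ cannot succeed.

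Your workaround does not change anything. By linearity and the uniqueness in Proposition~\ref{prop:existence}, your new $u^1$ (source $\Delta u^0$) plus $w$ (source $f^0$), both with zero data, equals the $u^1$ of Step 2. So $u^\eps$ and $f^\eps$ are the same functions as before, and reading $f^\eps\in C^2$ ``with the regularity available'' amounts to proving a weaker statement.

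A genuine repair is to let the source of the corrector depend on $\eps$:
\begin{itemize}
\item Pick $g_\eps\in C^\infty(D_\alpha)$ with $g_\eps\to f^0+\Delta u^0$ in $C^2_{loc}(D_\alpha)$.
\item Let $u^1_\eps$ solve $-\Delta_\parallel u^1_\eps=g_\eps$ with zero data. Then $u^1_\eps\in C^4$, because $\tilde H\in C^4$ and $F$ is a $C^5$ diffeomorphism when $B\in C^5$.
\item Set $u^\eps=u^0+\eps u^1_\eps$. Then $f^\eps=g_\eps-\Delta u^0-\eps\Delta_\perp u^1_\eps\in C^2$.
\end{itemize}
Corollary~\ref{coro:stab} bounds $\Vert u^1_\eps\Vert_{C^2(D^\delta_\alpha)}$ uniformly in $\eps$, and every compact subset of $D_\alpha$ lies in some $D^\delta_\alpha$. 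Hence $u^\eps\to u^0$ and $f^\eps\to f^0$ in $C^0_{loc}$. Alternatively, strengthen the hypotheses to $f^0\in C^4$, $P,Q\in C^6$ and $B\in C^7$, and the unmodified construction then works.
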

\begin{proof}[Proof of Theorem~\ref{Th:B2}] The existence of $u^0\in C^2(D_\alpha)$ follows from Proposition \ref{prop:existence}. Since $P,Q\in C^4$ and $B\in C^5$, then $H\in C^4$ and we get that $u^0\in C^4(D_\alpha)$. We let $u^1\in C^2(D_\alpha)$ be such that
\begin{equation}
\left\{\begin{array}{cc}
{-}\Delta_\parallel u^1:= f^0{+}\Delta u^0 \,,&\hbox{in }D_\alpha\,,\\
u^1(x_0, L/2)=\partial_yu^1(x_0, L/2)=0\,,&\hbox{ for all }x_0\in (0,\alpha)\,.
\end{array}\right.
\end{equation}
Here again, the existence follows from Proposition \ref{prop:existence}. We define $u^\eps:=u^0+\eps u^1$ and  $f^\eps:=f^0{-}\eps\Delta_\perp u^1$. As one checks, these families satisfy the conclusion of the Theorem.
\end{proof}

\bibliographystyle{abbrv}
\bibliography{bib}

\end{document}